\begin{document}

	\markboth{Q.~Yu et al}{DSM based on the Green's function for time-dependent inverse scattering problems}
	\title{A direct sampling method based on the Green's function for time-dependent inverse scattering problems}
	
	
	\author{Qingqing Yu, Bo Chen \corrauth, Jiaru Wang and Yao Sun~*}
	
	\address{ College of Science, Civil Aviation University of China, Tianjin, 300300, China. }

	\emails{{\tt charliecb@163.com} (Bo Chen), {\tt sunyao10@mails.jlu.edu.cn} (Yao Sun)}

	\begin{abstract}
		This paper concerns the numerical simulation of time domain inverse acoustic scattering problems with a point-like scatterer, multiple point-like scatterers or normal size scatterers. Based on the Green's function and the application of the time convolution, direct sampling methods are proposed to reconstruct the location of the scatterer. The proposed methods involve only integral calculus without solving any equations and are easy to implement. Numerical experiments are provided to show the effectiveness and robustness of the methods.
	\end{abstract}
	
	\keywords{Inverse scattering problem, time-dependent, direct sampling method, point-like scatterer, Green's function.}
	
	\ams{65M32}
	
	\maketitle
	
	
	\section{Introduction}
	The inverse scattering problems \cite{Springer-2,CCP,EABE} are widely used in geophysical exploration, sonar detection, non-destructive testing and medical imaging. Among the studies of the inverse acoustic scattering problems in the past decades, the research on the frequency domain problems accounts for the majority \cite{JCP-3,SIAM-4}. Nevertheless, since the dynamic scattered data is easier to obtain in practical applications, the time domain inverse scattering problems have attracted the attention of numerous researchers in recent years \cite{IP-5,Springer-1}.
	
	This paper is concerned with a simple direct sampling method to solve the inverse scattering problems in the time domain. Sampling methods are significant methods of solving inverse problems. Various sampling methods, such as the linear sampling method \cite{IP-1,IP-2,SIAM-3}, the direct sampling method \cite{IPI-2}, the reverse time migration method \cite{NMTMA1} and the factorization method \cite{IP-4}, have been studied in the past decades.
	The basic idea of the direct sampling methods is to construct indicator functions that have different performances inside and outside the area of the scatterer.
	Compared to the iterative methods, sampling methods do not require a priori information of the geometry and physical properties of the unknown target and are easy to implement.
	
	For the frequency domain inverse scattering problems, the linear sampling method was first proposed in 1996 \cite{IP-1}.
	A multilevel linear sampling method and its enhancement are proposed in \cite{SIAM-2} and \cite{JCP-1}, respectively.
	Based on the far field operator and the far field pattern of the scattered field, a direct sampling method to reconstruct the scatterers in the frequency domain is proposed in \cite{IP-6}.
	In \cite{ERA,IP-8}, simultaneous reconstructions of an obstacle and its excitation sources in frequency domain are considered.
	
	For the time domain inverse scattering problems, the linear sampling method was proved to be effective in 2010 \cite{IP-2}.
	In \cite{AA-1} and \cite{EAJAM}, the linear sampling method is used to solve the time domain inverse scattering problems in a locally perturbed half-plane and the time domain inverse scattering problems with cracks, respectively.
	The time domain linear sampling method is also considered to reconstruct an obstacle with Robin or Neumann boundary condition from near field measurements and reconstruct penetrable obstacles in \cite{AA-2} and \cite{IP-3}, respectively.
	Based on the migration method, effective sampling schemes to reconstruct small and extended scatterers from knowledge of time-dependent scattered data are proposed in \cite{JCP-2}.

	The reconstruction of point-like scatterers has been studied by image-based direct methods in \cite{IPI-1}. Note that the Green's function is significant for the analysis of inverse problems in mathematical physics \cite{IP-7,JSC,NMTMA2,AAM}. In \cite{IPSE-1}, the approximations of the solution to the forward scattering problem are analyzed utilizing the Green's function and the retarded single-layer potential, and an indicator function based on an approximate solution is defined to reconstruct the location of a single point-like scatterer. Unfortunately, the proposed method in \cite{IPSE-1} is not effective to reconstruct multiple point-like scatterers or normal size scatterers. In this paper, a novel direct sampling method is proposed to reconstruct the point-like scatterer based on the approximate solution to the forward scattering problem and the application of the time convolution. Moreover, modified methods are defined, which are feasible to reconstruct a single point-like scatterer, multiple point-like scatterers and normal size scatterers.
	
	The rest of the paper is organized as follows. A brief introduction of the forward and inverse scattering problems under consideration is given in Section 2. In Section 3, a novel direct sampling method is proposed and the theoretical analysis of the proposed method to reconstruct a point-like scatterer is provided. Moreover, two modified methods are proposed in this section. Numerical examples in both two and three dimensional spaces are provided to show the effectiveness and robustness of the proposed methods in Section 4. Finally, the paper is concluded in Section 5.

\section{Problem setting}

	Let $D\in\mathbb{R}^3$ be a bounded region where the scatterer locates and $c>0$ be the sound speed of the homogeneous background medium in $\mathbb{R}^3\backslash{\overline{D}}$. The Green's function of the D'Alembert operator $c^{-2}\partial_{tt}-\Delta$ is
	\begin{equation}
		G(x,t;y)=\frac{\delta(t-c^{-1}|x-y|)}{4\pi|x-y|}, \quad x,y\in\mathbb{R}^3,x\neq y,t\in\mathbb{R},
	\end{equation}
	where $\Delta$ is the Laplacian in $\mathbb{R}^3$ and $\delta(t)$ is the Dirac delta distribution.
	
	Let $\lambda(t)$ be a casual signal function, the casuality means that $\lambda(t)=0$ for $t<0$. Assume that $y$ represents the source point which satisfies $\{y\}\cap\overline{D}=\varnothing$, then the incident field $u^i$ due to $y$ is given by
	\begin{equation*}
		u^i(x,t;y)=G(x,t;y)\ast\lambda(t), \quad x\in{\mathbb{R}^3\setminus\left\{y\right\}},t\in\mathbb{R},
	\end{equation*}
	where
	\begin{equation}\label{timeconvolution}
		G(x,t;y)\ast\lambda(t)=\frac{\lambda{(t-c^{-1}|x-y|)}}{4\pi|x-y|}
	\end{equation}
	denotes the time convolution of the Green's function and the signal function. The time convolution (\ref{timeconvolution}) is an invaluable tool for the analysis of time domain scattering problems, which has been utilized to solve the forward scattering problem \cite{Springer-1}, reconstruct stationary point sources and a moving point source \cite{IP-7} and reconstruct a point-like scatterer \cite{IPSE-1}.

	For the analysis of the scattering problem, the total field $u^{tot}$ is usually divided into the incident field $u^i$ and the scattered field $u$, namely $u^{tot}=u^i+u$. The forward scattering problem for the impenetrable sound-soft obstacle $D$ is to find the scattered field $u$ which satisfies the homogeneous wave equation, Dirichlet boundary condition and homogeneous initial conditions
	
	\begin{align}
	\label{wave_eq}  c^{-2}\partial_{tt}{u}(x,t)-\Delta{u}(x,t)=0, & \quad x\in{\mathbb{R}^3}\setminus{\overline{D}},t\in\mathbb{R},\\
	\label{Dirichlet}u(x,t)=-u^i(x,t), & \quad x\in\partial{D},t\in\mathbb{R},\\
	\label{initial}  u(x,0)=0,  & \quad x\in{\mathbb{R}^3}\setminus{\overline{D}},\\
	\label{initiall} \partial_t{u}(x,0)=0, & \quad x\in{\mathbb{R}^3}\setminus{\overline{D}}.
	\end{align}

	Since $\lambda(t)$ is a casual function, the incident field $u^{i}$ and the scattered field $u$ vanish for $t<0$. Then the homogeneous initial conditions (\ref{initial}) and (\ref{initiall}) are direct conclusions of the causality. The forward scattering problem (\ref{wave_eq})-(\ref{initiall}) can be solved utilizing classical methods such as the retarded potential boundary integral equation method (refer to \cite{Springer-1,SIAM-5} for details).

	Assume that the region $B$ contains $D$ in it. The incident surface $\Gamma_i$ where the source points locate and the measurement surface $\Gamma_m$ where the measurement points locate are chosen as $\Gamma_i=\Gamma_m=\partial{B}$. The inverse scattering problem under consideration is: Determine the location and shape of the unknown target $D$ from a partial knowledge of the scattered waves
	\begin{equation*}
		\Big\{u(x,t;y):x\in\Gamma_m, t\in \mathbb{R}, y\in\Gamma_i\Big\}.
	\end{equation*}
	
	A special case of the scatterer is the point-like scatterer. For the time-harmonic scattering problems with a certain wavelength, a scatterer is called point-like if its diameter is far less than the wavelength. For the time-dependent scattering problem, however, the wavelength is unfixed and we need the definition of the center wavelength. In time domain, the signal function is often chosen as a Gaussian-modulated sinusoidal pulse
	\begin{equation*}
		\lambda(t)=\sin(\omega t)\mathrm{e}^{-\sigma(t-t_0)^2},
	\end{equation*}
	where $\omega>0$ is the center frequency, $\sigma>0$ is the frequency bandwidth parameter and $t_0$ is the time-shift parameter. Then the center wavelength is $2\pi c/\omega$ and a scatterer is called point-like if its diameter is far less than the center wavelength.
	
\section{The direct sampling methods}

In this section, several indicator functions are defined on the basis of the analysis of the Green's function and the scattering problem with a point-like scatterer. Direct sampling methods can be established based on the indicator functions.

\subsection{A direct sampling method}

In \cite{IPSE-1}, an approximate solution of the forward scattering problem with a point-like scatterer is provided based on the analysis of the Green's function.

\begin{lemma}\label{Lemma1}$($\cite{IPSE-1}~$)$
	Assume that $u(x,t;y)$ is the solution to the scattering problem (\ref{wave_eq})-(\ref{initiall}) with a point-like scatterer $D$, the incident point $y$ and the nontrivial signal function $\lambda(t)$. The incident surface $\Gamma_i$ and the measurement surface $\Gamma_m$ satisfy $\Gamma_i=\Gamma_m=\partial B$, where $\partial B$ is the boundary of the region $B$ which contains $D$ in it. The diameter of the scatterer $D$ satisfies $\textup{diam}(D)=\varepsilon(x)\min\limits_{x'\in \partial{D}}|x-x'|$  for arbitrary $x\in\Gamma_m$, in which $0<\varepsilon(x)\ll 1$. Denote by $y_0$ the geometric center of the point-like scatterer. Then we have
	\begin{equation*}
		u{(x,t;y)}=CU_{y_0}{(x,t;y)}+O(\varepsilon'(x;y)),\quad x\in\Gamma_m, t\in\mathbb{R}, y\in\Gamma_i,
	\end{equation*}
	where $C$ is a constant depending only on $x'$ and $D$, $\varepsilon'(x;y)=\max\{\varepsilon(x), \varepsilon(y)\}$ and $U_{y_0}$ is defined by
	\begin{equation*}
		U_{y_0}{(x,t;y)}=-\frac{\lambda(t-c^{-1}|x-y_0|-c^{-1}|y-y_0|)} {4\pi|x-y_0||y-y_0|}.
	\end{equation*}
\end{lemma}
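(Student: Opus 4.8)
The plan is to derive the asymptotic formula from a boundary integral representation of the scattered field and then exploit the smallness of $\mathrm{diam}(D)$ relative to the distances $|x-y_0|$ and $|y-y_0|$. Since $D$ is sound-soft, the scattered field admits a retarded single-layer potential representation $u(x,t;y) = \int_{\partial D} G(x,t-\tau;x')\,\varphi(x',\tau)\,d\tau\,ds(x')$ for some density $\varphi$ determined by the Dirichlet boundary condition (\ref{Dirichlet}) on $\partial D$, i.e.\ the convolution of $G$ with $\varphi$ restricted to $\partial D$ equals $-u^i$ there. First I would insert the explicit Green's function (1) to rewrite this as $u(x,t;y) = \int_{\partial D} \frac{\varphi(x', t - c^{-1}|x-x'|)}{4\pi|x-x'|}\,ds(x')$, so that all the $t$-dependence is carried by the retarded argument of the density.

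Next I would freeze the geometry at the center $y_0$. For $x\in\Gamma_m$ and $x'\in\partial D$ one has $|x-x'| = |x-y_0| + O(\mathrm{diam}(D)) = |x-y_0|(1+O(\varepsilon(x)))$, and similarly the incident field evaluated on $\partial D$, which drives $\varphi$, satisfies $u^i(x',t;y) = \frac{\lambda(t - c^{-1}|x'-y|)}{4\pi|x'-y|} = \frac{\lambda(t - c^{-1}|y_0-y|)}{4\pi|y_0-y|} + O(\varepsilon(y))$ by the analogous expansion at $y_0$ with respect to $y\in\Gamma_i$. Substituting the frozen distances into both the kernel $1/(4\pi|x-x'|)$ and the retardation $t-c^{-1}|x-x'|$ in the potential, and using that the density $\varphi$ depends on $D$ and on the frozen incident amplitude $\lambda(\cdot - c^{-1}|y_0-y|)/(4\pi|y_0-y|)$, one collects $\int_{\partial D}\varphi(x',\cdot)\,ds(x')$ into a single constant $C$ times $\lambda$ evaluated at the total retarded time $t - c^{-1}|x-y_0| - c^{-1}|y-y_0|$, divided by $4\pi|x-y_0||y-y_0|$ — which is precisely $C\,U_{y_0}(x,t;y)$. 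The overall sign in $U_{y_0}$ comes from the minus sign in the boundary condition (\ref{Dirichlet}).

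The main obstacle is controlling the error terms \emph{uniformly} and checking that the leading constant $C$ is genuinely independent of $x$ and $y$ (it should depend only on $\partial D$ and, through the normalization, on the reference point $x'$, as claimed): the density $\varphi$ itself depends on $x,y$ only through the scalar amplitude factor $\lambda(\cdot - c^{-1}|y_0-y|)/(4\pi|y_0-y|)$, which we have already pulled out, so the remaining integral $\int_{\partial D}\varphi\,ds$ normalized appropriately is constant; one must argue this carefully, presumably via the invertibility and boundedness of the retarded single-layer boundary operator on $\partial D$ together with the scaling behavior of that operator as $\mathrm{diam}(D)\to 0$. I would also need to make precise in what norm (e.g.\ pointwise in $t$, or in a suitable space-time norm) the $O(\varepsilon'(x;y))$ remainder is measured, and verify that the Taylor expansions of $|x-x'|$ and $|x'-y|$ around $y_0$ feed through the convolution without loss — this is where the causality and the smoothness of $\lambda$ enter, ensuring the retarded-time perturbations produce errors of the stated order. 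Since Lemma~\ref{Lemma1} is quoted from \cite{IPSE-1}, I would present this as the structure of the argument and refer to \cite{IPSE-1} for the full uniformity estimates.
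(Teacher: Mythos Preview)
The paper does not prove Lemma~\ref{Lemma1} at all; it is simply stated and attributed to \cite{IPSE-1}, so there is no in-paper argument to compare against. Your sketch --- retarded single-layer representation on $\partial D$, freezing the geometry at the center $y_0$, and expanding $|x-x'|$ and $|x'-y|$ to pull out the factor $U_{y_0}$ with remainder $O(\varepsilon'(x;y))$ --- is a plausible reconstruction of the argument in \cite{IPSE-1}, and you correctly conclude by deferring the uniform estimates to that reference, which is exactly what the present paper does.
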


Choose a sampling domain $\Omega$ which satisfies $\Omega\cap\Gamma_m=\varnothing$ and $D\subset \Omega$. Divide the sampling region into the sampling grid with the sampling points $z\in\Omega$ as the grid points. Let $u(x,t;y)$ be the solution to the forward scattering problem (\ref{wave_eq})-(\ref{initiall}) with the incident point $y$. Then, based on Lemma \ref{Lemma1}, define the indicator function
\begin{equation}\label{I1(z)}
	\resizebox{0.9\hsize}{!}{$
		I_1(z)=\frac{\left\|\int_{\Gamma_m}\left(u\ast U_{z}\right)(x,t;y)\mathrm{d}s_{x}\right\|_{L^2{(\mathbb{R} \times\Gamma_i)}}^2}
		{\left\|\int_{\Gamma_m}\left(u\ast u\right)(x,t;y)\mathrm{d}s_{x} \right\|_{L^2{(\mathbb{R}\times\Gamma_i)}} \cdot \left\|\int_{\Gamma_m}\left(U_{z}\ast U_{z}\right)(x,t;y)\mathrm{d}s_{x} \right\|_{L^2{(\mathbb{R}\times\Gamma_i)}}}, \quad z\in\Omega,
		$}
\end{equation}
where $\left(u\ast U_{z}\right)(x,t;y)$ denotes the time convolution of $u(x,t;y)$ and $U_{z}(x,t;y)$ and
\begin{equation*}
	\|\varphi(t;y)\|_{L^2{(\mathbb{R}\times\Gamma_i)}} := \left(\int_{\Gamma_i}\int_{\mathbb{R}}|\varphi(t;y)|^2 \mathrm{d}t\mathrm{d}s_{y}\right)^{1/2}.
\end{equation*}

For the reconstruction of a single point-like scatterer utilizing the indicator function $I_1(z)$, we expect that $I_1(z)$ reaches the maximum when the sampling point $z$ coincides with the position of the point-like scatterer. Another indicator function
\begin{equation}\label{I1'(z)}
		I_1^{'}(z)=\frac{\left\|\int_{\Gamma_m}\left(u\ast u_{z}\right)(x,t;y)\mathrm{d}s_{x}\right\|_{L^2{(\mathbb{R} \times\Gamma_i)}}^2}
		{\left\|\int_{\Gamma_m}\left(u\ast u\right)(x,t;y)\mathrm{d}s_{x} \right\|_{L^2{(\mathbb{R}\times\Gamma_i)}} \cdot \left\|\int_{\Gamma_m}\left(u_{z}\ast u_{z}\right)(x,t;y)\mathrm{d}s_{x} \right\|_{L^2{(\mathbb{R}\times\Gamma_i)}}}, \quad z\in\Omega
\end{equation}
\noindent
is needed for the analysis, where $u_{z}(x,t;y)$ is the solution of the scattering problem (\ref{wave_eq})-(\ref{initiall}) when the point-like scatterer is located at $D^{'}=\{x\in\Bbb R^3|x-z+y_0\in D\}$.

\begin{theorem}\label{Theorem1}
	Assume that $u(x,t;y)$ is the solution to the scattering problem (\ref{wave_eq})-(\ref{initiall}) with a point-like scatterer $D$, the incident point $y$ and the nontrivial signal function $\lambda(t)$. Denote by $y_0$ the geometric center of $D$. The incident surface $\Gamma_i$ and the measurement surface $\Gamma_m$ satisfy $\Gamma_i=\Gamma_m=\partial B$, where $\partial B$ is the boundary of the region $B$ which contains $D$ in it. The diameter of the scatterer $D$ satisfies $\textup{diam}(D)=\varepsilon(x)\min\limits_{x'\in \partial{D}}|x-x'|$  for arbitrary $x\in\Gamma_m$, in which $0<\varepsilon(x)\ll 1$. Denote by $\Omega$ the sampling region that contains $D$ in it and assume that the distance between $\Omega$ and $\Gamma_m$ satisfies $\textup{dist}(\Omega,\Gamma_m)>\textup{diam}(D)$. For a point $z\in\Omega$, let $u_{z}(x,t;y)$ be the solution to (\ref{wave_eq})-(\ref{initiall}) with a point-like scatterer located at $D^{'}=\{x\in\Bbb R^3|x-z+y_0\in D\}$. The indicator function $I_1^{'}(z)$ defined by (\ref{I1'(z)}) satisfies
	
	\begin{equation*}
		I_1^{'}(z)
		\left\{
		\begin{array}{ll}
			<1, \quad &z\neq y_0,\\
			=1, \quad &z=y_0.
		\end{array}
		\right.
	\end{equation*}
	The indicator function $I_1(z)$ defined by (\ref{I1(z)}) satisfies
	$$I_1(z)=I_1^{'}(z)+O(\varepsilon),$$
	where $0<\varepsilon\ll1$.
\end{theorem}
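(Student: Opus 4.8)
The plan is to prove the two assertions in turn. The statement about $I_1'$ splits at $z=y_0$: there the translated scatterer $D'=\{x\in\mathbb{R}^3:x-z+y_0\in D\}$ is exactly $D$, hence $u_z=u$, the numerator of $I_1'(y_0)$ equals $\big\|\int_{\Gamma_m}(u\ast u)\,\mathrm{d}s_x\big\|_{L^2(\mathbb{R}\times\Gamma_i)}^2$, this cancels the denominator, and $I_1'(y_0)=1$. For $z\neq y_0$ the target is the strict inequality $I_1'(z)<1$, and the tool is a Cauchy--Schwarz estimate for the normalized correlation defining $I_1'$.

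To expose that structure I would pass to the temporal Fourier domain: writing $\widehat{v}$ for the Fourier transform of $v$ in $t$, the convolution theorem replaces $u\ast u_z$ by $\widehat{u}\,\widehat{u_z}$, and Parseval's identity rewrites each $L^2(\mathbb{R}\times\Gamma_i)$-norm as a double integral over $(\omega,y)$ of $\big|\int_{\Gamma_m}\widehat{u}\,\widehat{u_z}\,\mathrm{d}s_x\big|^2$ (and its analogues with $\widehat{u}^2$ and $\widehat{u_z}^2$). Applying Lemma \ref{Lemma1} --- which holds verbatim for $u_z$ with centre $z$ and, by translation invariance, the same constant $C$, since $D'$ is a rigid translate of $D$ --- gives the product forms $\widehat{u}(x,\omega;y)=C\widehat{\lambda}(\omega)\Phi_\omega(x,y_0)\Phi_\omega(y,y_0)+O(\varepsilon)$ and $\widehat{u_z}(x,\omega;y)=C\widehat{\lambda}(\omega)\Phi_\omega(x,z)\Phi_\omega(y,z)+O(\varepsilon)$, where $\Phi_\omega(x,p)=\mathrm{e}^{-\mathrm{i}\omega|x-p|/c}/(4\pi|x-p|)$; the hypothesis $\mathrm{dist}(\Omega,\Gamma_m)>\mathrm{diam}(D)$ guarantees $\overline{\Omega}\cap\Gamma_m=\varnothing$ (so these traces are smooth, uniformly for $z\in\Omega$) and makes the smallness condition of Lemma \ref{Lemma1} meaningful. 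Substituting the product forms, the constant $C$ and the amplitude $|\widehat{\lambda}(\omega)|$ drop out of the normalized ratio, and since $|\Phi_\omega(x,p)|=1/(4\pi|x-p|)$ does not depend on $\omega$, $I_1'(z)$ factors --- up to $O(\varepsilon)$ --- as the product of a purely geometric quotient over $\Gamma_i$, controlled by Cauchy--Schwarz in $L^2(\Gamma_i)$ with equality only if $|y-z|$ is a constant multiple of $|y-y_0|$ over all $y\in\partial B$, and a frequency quotient built from $P_\omega(p,q):=\int_{\Gamma_m}\Phi_\omega(x,p)\Phi_\omega(x,q)\,\mathrm{d}s_x$, which is at most $1$ once one has $|P_\omega(y_0,z)|^2\le|P_\omega(y_0,y_0)|\,|P_\omega(z,z)|$ (two more Cauchy--Schwarz steps, in $x$ at fixed $\omega$ and then in $\omega$ against $|\widehat{\lambda}|^4$, then finish it). Because the $\partial B$-constraint can hold only if $\partial B$ is an Apollonius sphere of $y_0$ and $z$, on which --- owing to the oscillatory phases in $P_\omega$ --- the frequency quotient cannot also be extremal, equality forces $z=y_0$; thus $I_1'(z)<1$ for $z\neq y_0$.

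For $I_1(z)=I_1'(z)+O(\varepsilon)$ I would compare the two indicators directly. They share the same shape, the only difference being that $I_1$ pairs $u$ with the explicit $U_z$ whereas $I_1'$ pairs $u$ with $u_z$, and Lemma \ref{Lemma1} (applied to $D'$) gives $u_z=C\,U_z+O(\varepsilon)$, hence also $u_z\ast u_z=|C|^2\,U_z\ast U_z+O(\varepsilon)$. Inserting these into $I_1'$, the factor $|C|^2$ appears once in the numerator and once in the denominator and cancels, so $I_1'(z)=I_1(z)+O(\varepsilon)$, provided the denominators $\big\|\int_{\Gamma_m}(u\ast u)\,\mathrm{d}s_x\big\|_{L^2(\mathbb{R}\times\Gamma_i)}$ and $\big\|\int_{\Gamma_m}(U_z\ast U_z)\,\mathrm{d}s_x\big\|_{L^2(\mathbb{R}\times\Gamma_i)}$ stay bounded away from $0$ uniformly for $z\in\overline{\Omega}$ --- which follows from $\lambda\not\equiv 0$, $C\neq 0$, and the continuity in $z$ of these explicit quantities on the compact set $\overline{\Omega}$.

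The main obstacle is the pointwise bound $|P_\omega(y_0,z)|^2\le|P_\omega(y_0,y_0)|\,|P_\omega(z,z)|$ behind the frequency quotient: it is a Cauchy--Schwarz inequality for the bilinear pairing $(f,g)\mapsto\int_{\Gamma_m}fg\,\mathrm{d}s_x$, which is not positive-definite because of the phases $\mathrm{e}^{-\mathrm{i}\omega|x-p|/c}$ and so cannot simply be quoted. I would deal with it either by passing to the genuine conjugate-linear $L^2(\Gamma_m)$ inner product and absorbing the phase-cancellation discrepancy $\int_{\Gamma_m}|\Phi_\omega(x,p)|^2\,\mathrm{d}s_x-\big|\int_{\Gamma_m}\Phi_\omega(x,p)^2\,\mathrm{d}s_x\big|$ into the $O(\varepsilon)$ term, or by establishing the bound one frequency at a time with a uniqueness argument for the exterior Helmholtz problem: proportionality of the traces $\Phi_\omega(\cdot,y_0)|_{\partial B}$ and $\Phi_\omega(\cdot,z)|_{\partial B}$ forces $y_0=z$ by matching the singularities of the corresponding fundamental solutions. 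Making this precise, and ruling out that both quotients are extremal away from $z=y_0$, is the crux; everything else amounts to routine propagation of the $O(\varepsilon)$ estimate from Lemma \ref{Lemma1}.
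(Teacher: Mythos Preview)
Your treatment of $I_1(z)=I_1'(z)+O(\varepsilon)$ and of $I_1'(y_0)=1$ matches the paper. The gap is in your argument for $I_1'(z)<1$ when $z\neq y_0$: you invoke Lemma~\ref{Lemma1} to replace both $u$ and $u_z$ by the approximations $CU_{y_0}$ and $CU_z$ before estimating, and you explicitly say that $I_1'(z)$ factors only ``up to $O(\varepsilon)$''. But the theorem asserts the \emph{exact} inequality $I_1'(z)<1$; once you have substituted the approximations, the best you can conclude is $I_1'(z)\le 1+O(\varepsilon)$, which is not what is claimed and in fact would make the split statement of the theorem (first an exact inequality for $I_1'$, then an $O(\varepsilon)$ relation to $I_1$) pointless. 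The ``main obstacle'' you single out --- the failure of positive-definiteness of the bilinear pairing $(f,g)\mapsto\int_{\Gamma_m}fg\,\mathrm{d}s_x$ on the factors $\Phi_\omega(\cdot,p)$, and the ensuing Apollonius-sphere discussion --- is a difficulty created by this unnecessary substitution, not one intrinsic to $I_1'$.

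The paper's route is much shorter. It passes to the Fourier side by Plancherel and the convolution theorem (as you do), but then applies Cauchy--Schwarz directly to $\int_{\Gamma_m}\mathcal{F}[u](x,\omega;y)\,\mathcal{F}[u_z](x,\omega;y)\,\mathrm{d}s_x$ with the \emph{exact} transforms, followed by a second Cauchy--Schwarz over $(\omega,y)\in\mathbb{R}\times\Gamma_i$; this yields $I_1'(z)\le 1$ with no approximation and no splitting into geometric and frequency quotients. For strictness the paper does not analyse phases at all: equality in Cauchy--Schwarz would force the traces of $\mathcal{F}[u]$ and $\mathcal{F}[u_z]$ on $\Gamma_m$ to be proportional for (almost) all $(\omega,y)$, and the paper simply invokes the uniqueness theorem for the time-domain scattering problem (\cite{IP-2}, Theorem~4) to conclude that the scatterers $D$ and $D'$ coincide, hence $z=y_0$. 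Lemma~\ref{Lemma1} is used \emph{only} in the second assertion, exactly as you use it there; keep it out of the first.
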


\begin{proof}
	Denote by $\mathcal{F}[\varphi](x,\omega;y)$ the fourier transform of $\varphi(x,t;y)$ on the time variable $t$. According to the convolution theorem, the Plencherel theorem and the Cauchy-Schwarz inequality, we have
	\begin{equation*}
		\begin{array}{cl}
			I_1^{'}(z)&=\frac{\left\|\mathcal{F}\left[\int_{\Gamma_m}\left(u\ast u_{z}\right)\mathrm{d}s_{x}\right](\omega;y)\right\|_{{L^2} \left(\mathbb{R}\times\Gamma_i\right)}^2}
			{\left\|\mathcal{F}\left[\int_{\Gamma_m}\left(u\ast u\right)\mathrm{d}s_{x}\right](\omega;y)\right\|_{L^2{\left( \mathbb{R}\times\Gamma_i\right)}}\cdot
				\left\|\mathcal{F}\left[\int_{\Gamma_m}\left(u_{z}\ast u_{z}\right)\mathrm{d}s_{x}\right](\omega;y)\right\|_{L^2{\left( \mathbb{R}\times\Gamma_i\right)}}}\\
			&=\frac{\left\|\int_{\Gamma_m}\mathcal{F}\left[u\right] (x,\omega;y)\cdot \mathcal{F}\left[u_{z}\right](x,\omega;y)\mathrm{d}s_{x} \right\|_{{L^2} \left(\mathbb{R}\times\Gamma_i\right)}^2}
			{\left\|\int_{\Gamma_m}\left(\mathcal{F}\left[u\right] (x,\omega;y)\right)^2\mathrm{d}s_{x} \right\|_{L^2{\left( \mathbb{R}\times\Gamma_i\right)}}\cdot
				\left\|\int_{\Gamma_m}\left(\mathcal{F}\left[u_{z}\right] (x,\omega;y)\right)^2\mathrm{d}s_{x} \right\|_{L^2{\left(\mathbb{R}\times\Gamma_i\right)}}}\\
			&\leq\frac{\left\|\left(\int_{\Gamma_m}\left(\mathcal{F}\left[u\right] (x,\omega;y)\right)^2\mathrm{d}s_{x} \right)^{1/2} \cdot \left(\int_{\Gamma_m}\left(\mathcal{F}\left[u_z\right] (x,\omega;y)\right)^2\mathrm{d}s_{x} \right)^{1/2} \right\|_{{L^2} \left(\mathbb{R}\times\Gamma_i\right)}^2}
			{~~~\left\|\int_{\Gamma_m}\left(\mathcal{F}\left[u\right] (x,\omega;y)\right)^2\mathrm{d}s_{x} \right\|_{L^2{\left( \mathbb{R}\times\Gamma_i\right)}}\cdot
				\left\|\int_{\Gamma_m}\left(\mathcal{F}\left[u_z\right] (x,\omega;y)\right)^2\mathrm{d}s_{x} \right\|_{L^2{\left(\mathbb{R}\times\Gamma_i\right)}}~~~}\\
			&\leq 1.
		\end{array}
	\end{equation*}
	Noticing that $u_{y_0}(x,t;y)=u(x,t;y)$, we have
	\begin{equation*}	
			I_1^{'}(y_0)=\frac{\left\|\int_{\Gamma_m}\left(u\ast u_{y_0}\right)(x,t;y)\mathrm{d}s_{x}\right\|_{L^2{(\mathbb{R} \times\Gamma_i)}}^2}
			{\left\|\int_{\Gamma_m}\left(u\ast u\right)(x,t;y)\mathrm{d}s_{x} \right\|_{L^2{(\mathbb{R}\times\Gamma_i)}} \cdot \left\|\int_{\Gamma_m}\left(u_{y_0}\ast u_{y_0}\right)(x,t;y)\mathrm{d}s_{x} \right\|_{L^2{(\mathbb{R}\times\Gamma_i)}}}=1.	
	\end{equation*}
	Moreover, combining with the uniqueness of the solution to the scattering problem (\cite{IP-2}, Theorem 4), we have
	\begin{equation*}
		I_1^{'}(z)
		\left\{
		\begin{array}{ll}
			<1, \quad &z\neq y_0,\\
			=1, \quad &z=y_0.
		\end{array}
		\right.
	\end{equation*}
	
	Note that $\textup{diam}(D^{'})=\textup{diam}(D)$ and Lemma \ref{Lemma1} implies
	\begin{equation*}
		u_{z}(x,t;y)=CU_{z}{(x,t;y)}+O(\varepsilon'(x;y)).
	\end{equation*}
	Define $\varepsilon=\sup\limits_{x\in\Gamma_m,y\in\Gamma_i}\varepsilon'(x;y)$. Then we have $0<\varepsilon\ll1$ and $u_z=CU_{z}+O(\varepsilon)$. Moreover, a direct computation implies
	
		\begin{equation*}
			\begin{array}{cl}
				I_1^{'}(z)
				&=\frac{\left\|\int_{\Gamma_m}\left(u\ast u_{z}\right)(x,t;y)\mathrm{d}s_{x}\right\|_{L^2{(\mathbb{R} \times\Gamma_i)}}^2}
				{\left\|\int_{\Gamma_m}\left(u\ast u\right)(x,t;y)\mathrm{d}s_{x} \right\|_{L^2{(\mathbb{R}\times\Gamma_i)}} \cdot \left\|\int_{\Gamma_m}\left(u_{z}\ast u_{z}\right)(x,t;y)\mathrm{d}s_{x} \right\|_{L^2{(\mathbb{R}\times\Gamma_i)}}}\\
				
				&=\frac{\left\|\int_{\Gamma_m}\left(u\ast (CU_{z}+O(\varepsilon))\right)(x,t;y)\mathrm{d}s_{x}\right\|_{L^2{(\mathbb{R} \times\Gamma_i)}}^2}
				{\left\|\int_{\Gamma_m}\left(u\ast u\right)(x,t;y)\mathrm{d}s_{x} \right\|_{L^2{(\mathbb{R}\times\Gamma_i)}} \cdot \left\|\int_{\Gamma_m}\left((CU_{z}+O(\varepsilon))\ast (CU_{z}+O(\varepsilon))\right)(x,t;y)\mathrm{d}s_{x} \right\|_{L^2{(\mathbb{R}\times\Gamma_i)}}}\\
				
				&=\frac{C^2\left\|\int_{\Gamma_m}\left(u \ast U_{z}\right)(x,t;y)\mathrm{d}s_{x}\right\|_{L^2{(\mathbb{R} \times\Gamma_i)}}^2+O(\varepsilon)}
				{\left\|\int_{\Gamma_m}(u\ast u)(x,t;y)\mathrm{d}s_{x} \right\|_{L^2{(\mathbb{R}\times\Gamma_i)}} \cdot \left(C^2\left\|\int_{\Gamma_m}\left(U_{z} \ast U_{z} \right)(x,t;y)\mathrm{d}s_{x} \right\|_{L^2{(\mathbb{R}\times\Gamma_i)}} +O(\varepsilon)\right)}\\
				
				&=\frac{\left\|\int_{\Gamma_m}\left(u \ast U_{z}\right)(x,t;y)\mathrm{d}s_{x}\right\|_{L^2{(\mathbb{R} \times\Gamma_i)}}^2}
				{\left\|\int_{\Gamma_m}(u\ast u)(x,t;y)\mathrm{d}s_{x} \right\|_{L^2{(\mathbb{R}\times\Gamma_i)}} \cdot \left(\left\|\int_{\Gamma_m}\left(U_{z} \ast U_{z} \right)(x,t;y)\mathrm{d}s_{x} \right\|_{L^2{(\mathbb{R}\times\Gamma_i)}}\right)} +O(\varepsilon)\\
				
				&=I_1(z)+O(\varepsilon).
			\end{array}
		\end{equation*}
	
	This completes the proof.
	\qed
\end{proof}

According to Theorem \ref{Theorem1}, we can derive that the indicator function $I_1^{'}(z)$ reaches its global maximum if and only if $z=y_0$, and $I_1(z)$ is an approximation of $I_1^{'}(z)$. Then we can say that, approximately, $I_1(z)$ reaches its global maximum if and only if $z=y_0$. Thus, we can get the approximate location of the point-like scatterer by drawing the image of $I_1(z)$. The algorithm can be summarized as Algorithm 1. The feasibility of the algorithm to reconstruct a point-like scatterer is verified by the numerical experiments in the next section.

\begin{table}\label{algorithm1}
	\centering
	\begin{tabular}{ll}
		\hline
		\multicolumn{2}{l}{\textbf{Algorithm 1:} The reconstruction scheme using the indicator function $I_1(z)$.}\\
		\hline
		Step1 & Choose the scatterer $D$, the signal function $\lambda(t)$, the incident surface $\Gamma_i$  \\
		& and the measurement surface $\Gamma_m=\Gamma_i$. Select the sensing points  \\
		& $x_i\in\Gamma_m~( i=1,\cdots,N_m)$, the equidistant time nodes $t_k\in[0,T]~(k=0,\cdots,N_t)$   \\
		& and the source points $y_j\in\Gamma_i~( j=1,\cdots,N_i)$. Collect the scattered data on $\Gamma_m$:\\
		& \quad\quad\quad\quad $u(x_i,t_k;y_j),\quad i=1,\cdots,N_m$, $k=0,\cdots,N_t$, $j=1,\cdots,N_i$. \\
		Step2 & Choose a cubic sampling domain $\Omega$ such that $D\subset\Omega$, $\overline{\Omega}\cap\Gamma_m=\varnothing$. Divide $\Omega$ \\
		& into equidistant sampling grid, for any sampling point $z\in\Omega$, compute\\
		&\quad\quad $\displaystyle{ N_{u,U_z}(z)=\left({\sum\limits_{j=1}\limits^{N_i}} {\sum\limits_{k=0}\limits^{N_t}} \left|{\sum\limits_{i=1}\limits^{N_m}} \sum\limits_{l=0}\limits^{k}u(x_i,t_{k-l};y_j) U_z(x_i,t_l;y_j)\Delta t
			\Delta s_{x_i}\right|^2\Delta t \Delta s_{y_j}\right)^{1/2}}$,\\
		&\quad\quad $\displaystyle{ N_{u,u}(z)=\left({\sum\limits_{j=1}\limits^{N_i}} {\sum\limits_{k=0}\limits^{N_t}} \left|{\sum\limits_{i=1}\limits^{N_m}} \sum\limits_{l=0}\limits^{k}u(x_i,t_{k-l};y_j) u(x_i,t_l;y_j)\Delta t \Delta s_{x_i}\right|^2\Delta t \Delta s_{y_j}\right)^{1/2}}$,\\
		&\quad\quad $\displaystyle{ N_{U_z,U_z}(z)=\left({\sum\limits_{j=1}\limits^{N_i}} {\sum\limits_{k=0}\limits^{N_t}} \left|{\sum\limits_{i=1}\limits^{N_m}} \sum\limits_{l=0}\limits^{k}U_z(x_i,t_{k-l};y_j) U_z(x_i,t_l;y_j)\Delta t
			\Delta s_{x_i}\right|^2\Delta t \Delta s_{y_j}\right)^{1/2}}$,\\
		&and\\
		&\quad \quad $\displaystyle{ I_1(z)=\frac{N_{u,U_z}^2(z)}
			{N_{u,u}(z)\cdot N_{U_z,U_z}(z)},}$\\
		&where $\Delta t$ is the length of the time step, $\Delta s_{x_i}$ and $\Delta s_{y_j}$ are the areas of the  \\
		&grid cells of the sensor $x_i$ and the source $y_j$, respectively.\\
		Step3 & Mesh $I_1(z)$ on the sampling grid. The location of the point-like scatterer is  \\
		& determined by the global maximum point of $I_1(z)$.\\
		\hline
	\end{tabular}
\end{table}

\begin{remark}\label{remark1}
	A time shift parameter $\mu$ appears in the numerical computation of the time domain inverse scattering problems $($see, e.g., \cite{AA-1,IP-3,IPSE-1}~$)$. The time shift parameter has a significant impact for the reconstruction of the scatterer. However, the theoretical analysis of the time shift is scarce and the time-shift parameter is usually chosen by trial and error in the numerical experiments.
	
	In the definition of the indicator function (\ref{I1(z)}), we choose the time convolution of $u$ and $U_z$ instead of the product as that in \cite{IPSE-1}. An important property of the convolution is
	\begin{equation*}
		(\tau_{\mu}f)\ast g=f\ast(\tau_{\mu}g)=\tau_{\mu}(f\ast g),
	\end{equation*}
	where $\tau_{\mu}$ is the time shift operator defined by $(\tau_{\mu}f)(t)=f(t-\mu)$. On this basis, we have
	\begin{equation*}
		\int_{\mathbb{R}}\left|\left((\tau_{\mu}f)\ast g\right)(t)\right|^2\mathrm{d}t=\int_{\mathbb{R}}\left|\left(\tau_{\mu}(f\ast g)\right)(t)\right|^2\mathrm{d}t=\int_{\mathbb{R}}\left|\left(f\ast g\right)(t)\right|^2\mathrm{d}t.
	\end{equation*}
	The property is found interesting since the time shift is no longer an important factor when we utilize the time convolution in the indicator function.
\end{remark}

\subsection{The modified indicator functions}

For $x\in\Gamma_m, t\in\mathbb{R}$ and $z\in\Omega$, compare the functions
\begin{equation*}
	U_z{(x,t;y)}=-\frac{\lambda(t-c^{-1}|x-z|-c^{-1}|y-z|)}{4\pi|x-z||y-z|}
\end{equation*}
and
\begin{equation*}
	(G\ast{\lambda})(x,t;z)=\frac{\lambda(t-c^{-1}|x-z|)}{4\pi|x-z|}.
\end{equation*}
Note that the difference between the signal functions $\lambda(t-c^{-1}|x-z|-c^{-1}|y-z|)$ and $\lambda(t-c^{-1}|x-z|)$ is the time shift $c^{-1}|y-z|$. Define
\begin{equation*}
	\begin{array}{rl}
		f_{1,z}{(x,t;y)}&:=u(x,t;y)\ast\lambda(t-c^{-1}|x-z|),\\
		f_{2,z}{(x,t;y)}&:=u(x,t;y)\ast\lambda(t-c^{-1}|x-z|-c^{-1}|y-z|).
	\end{array}
\end{equation*}
According to Remark \ref{remark1}, we have
$$f_{2,z}{(x,t;y)}=f_{1,z}{(x,t-c^{-1}|y-z|;y)}.$$
Then a direct computation implies
\begin{equation*}
	\begin{array}{rl}
		\left\|\int_{\Gamma_m}f_{2,z}{(x,t;y)}\mathrm{d}s_{x} \right\|_{L^2{(\mathbb{R} \times\Gamma_i)}} &= \left\|\int_{\Gamma_m}f_{1,z}{(x,t-c^{-1}|y-z|;y)}\mathrm{d}s_{x} \right\|_{L^2{(\mathbb{R} \times\Gamma_i)}}\\
		&=\left\|\int_{\Gamma_m}f_{1,z}{(x,t;y)}\mathrm{d}s_{x} \right\|_{L^2{(\mathbb{R} \times\Gamma_i)}},
	\end{array}
\end{equation*}
which means $U_z{(x,t;y)}$ and $(G\ast{\lambda})(x,t;z)$ play similar roles while calculating time convolutions.

For convenience of expression, denote
\begin{equation*}
	G_{z}(x,t)=(G\ast{\lambda})(x,t;z).
\end{equation*}
Then we define a new indicator function
\begin{equation}\label{I2(z)}
	\resizebox{0.8\hsize}{!}{$
		I_2(z)=\frac{\left\|\int_{\Gamma_m}\left(u\ast G_{z}\right) (x,t;y)\mathrm{d}s_{x}\right\|_{L^2{(\mathbb{R} \times\Gamma_i)}}^2}
		{\left\|\int_{\Gamma_m}\left(u\ast u\right)(x,t;y)\mathrm{d}s_{x} \right\|_{L^2{(\mathbb{R}\times\Gamma_i)}} \cdot \left\|\int_{\Gamma_m}\left(U_{z}\ast U_{z}\right)(x,t;y)\mathrm{d}s_{x} \right\|_{L^2{(\mathbb{R}\times\Gamma_i)}}}, \quad z\in\Omega.
		$}
\end{equation}

Note that only the molecular is slightly modified in $I_2(z)$ compared to $I_1(z)$. Therefore, Algorithm 2 based on the indicator function $I_2(z)$ is similar to Algorithm 1 except that we use
$$I_2(z)=\frac{N_{u,G_z}^2(z)}{N_{u,u}(z)\cdot N_{U_z,U_z}(z)}$$
instead of $I_1(z)$ as the indicator, in which $N_{u,u}(z)$ and $N_{U_z,U_z}(z)$ are defined the same as that in Algorithm 1 and
$$N_{u,G_z}(z)=\left({\sum\limits_{j=1}\limits^{N_i}} {\sum\limits_{k=0}\limits^{N_t}} \left|{\sum\limits_{i=1}\limits^{N_m}} \sum\limits_{l=0}\limits^{k}u(x_i,t_{k-l};y_j) G_z(x_i,t_l) \Delta t \Delta s_{x_i} \right|^2\Delta t \Delta s_{y_j} \right)^{1/2}.$$

Furthermore, notice that $\left\|\int_{\Gamma_m}\left(u\ast u\right)(x,t;y)\mathrm{d}s_{x} \right\|_{L^2{(\mathbb{R}\times\Gamma_i)}}$ is independent of $z$ and has no influence to the effect of the algorithm. Based on this thought, we go a step further and define a new modified indicator function
\begin{equation}\label{I3(z)}
	I_3(z)=\left\|\int_{\Gamma_m}\left(u\ast G_{z}\right) (x,t;y)\mathrm{d}s_{x}\right\|_{L^2{(\mathbb{R} \times\Gamma_i)}}, \quad z\in\Omega.
\end{equation}
Once again, Algorithm 3 based on the indicator function $I_3(z)$ is similar to Algorithm 1 except that we use
$$I_3(z)=N_{u,G_z}(z)$$
as the indicator.

Note that Algorithm 3 is feasible to reconstruct a single point-like scatterer, multiple point-like scatterers, a normal size scatterer or some of the more complicated cases. The effectiveness and robustness of the algorithm can be seen in Section 4.

\section{Numerical experiments}

This section demonstrates the effectiveness of the proposed algorithms with numerical experiments in both two and three dimensional spaces.

In $\mathbb{R}^2$, the Green's function of the D'Alembert operator $c^{-2}\partial_{tt}-\Delta$ is
\begin{equation*}
	G_2(x,t;y)=\frac{H(t-c^{-1}|x-y|)}{2\pi \sqrt{t^2-c^{-2}|x-y|^2}},\quad
	x,y\in\mathbb{R}^2,x\neq y,t\in\mathbb{R},
\end{equation*}
where $H(t)$ is the Heaviside function
\begin{equation*}
	H(t)=
	\left\{
	\begin{array}{ll}
		1, \quad t>0,\\
		0, \quad t\leq0.
	\end{array}
	\right.
\end{equation*}
The time convolution of the Green's function and the signal function is
\begin{equation*}
		G_2(x,t;y)\ast\lambda(t)=\int_{-\infty}^{t-c^{-1}|x-y|} \frac{\lambda{(\tau)}}{2\pi \sqrt{(t-\tau)^2-c^{-2}|x-y|^2}}\mathrm{d}\tau, \quad x,y\in\mathbb{R}^2,x\neq y,t\in\mathbb{R}.
\end{equation*}

In the experiments, denote by $N=N_i=N_m$ the number of the incident/measurement points. The time interval $[0,T]$ is equally divided into $N_{t}$ time steps. Choose the sound speed in the homogeneous background medium as $c=1$. The random noise is added to the scattered data by
$$u_{\varepsilon}=u(1+\varepsilon r),$$
where $\varepsilon$ denotes the noise level and $r$ are uniformly distributed random numbers ranging from $-1$ to $1$. The signal function is chosen to be a sinusoidal pulse
\begin{equation*}
	\lambda(t)=\sin(4 t)\mathrm{e}^{-1.6(t-3)^2}.
\end{equation*}

For the experiments in $\mathbb{R}^2$, unless otherwise specified, we choose $T=25$ for Algorithm 1 and Algorithm 2, choose $T=15$ for Algorithm 3, and choose $N_t=128$ for all the three algorithms. $N=20$ is chosen for the reconstructions with full aperture data. The measurement points are evenly distributed on the circle of radius 4 with the center at the origin, which means the measurement points are
$$x_i=4\left(\cos{\frac{i\pi}{10}},\sin{\frac{i\pi}{10}}\right),\quad i=0,1,\cdots,19.$$
The sampling points are chosen as $21\times21$ uniform discrete points in the sampling region $\Omega=[-2.6,2.6]\times[-2.6,2.6]$.

\begin{example}\textbf{Reconstructions of a point-like scatterer}\label{example1}

In this example,  we consider the reconstructions of a point-like scatterer utilizing the Algorithms 1-3. The boundary of the scatterer is parameterized as
\begin{equation}\label{point}
	s(\theta) = (a,b)+0.001(\cos\theta,\sin\theta),\quad \theta\in[0,2\pi).
\end{equation}
The reconstructions of point-like scatterers centered at $(a,b)=(0,0)$ with different noise are shown in Figure \ref{fig-i1i2i3-point}. The measurement points are marked with black asterisks and the green asterisks represent the exact locations of the point-like scatterers. Figure \ref{fig-i1i2i3-point} shows that the Algorithms 1-3 are feasible and robust to reconstruct a point-like scatterer.

\begin{figure}
	\centering
	\begin{tabular}{ccc}
		\includegraphics[width=0.33\textwidth]{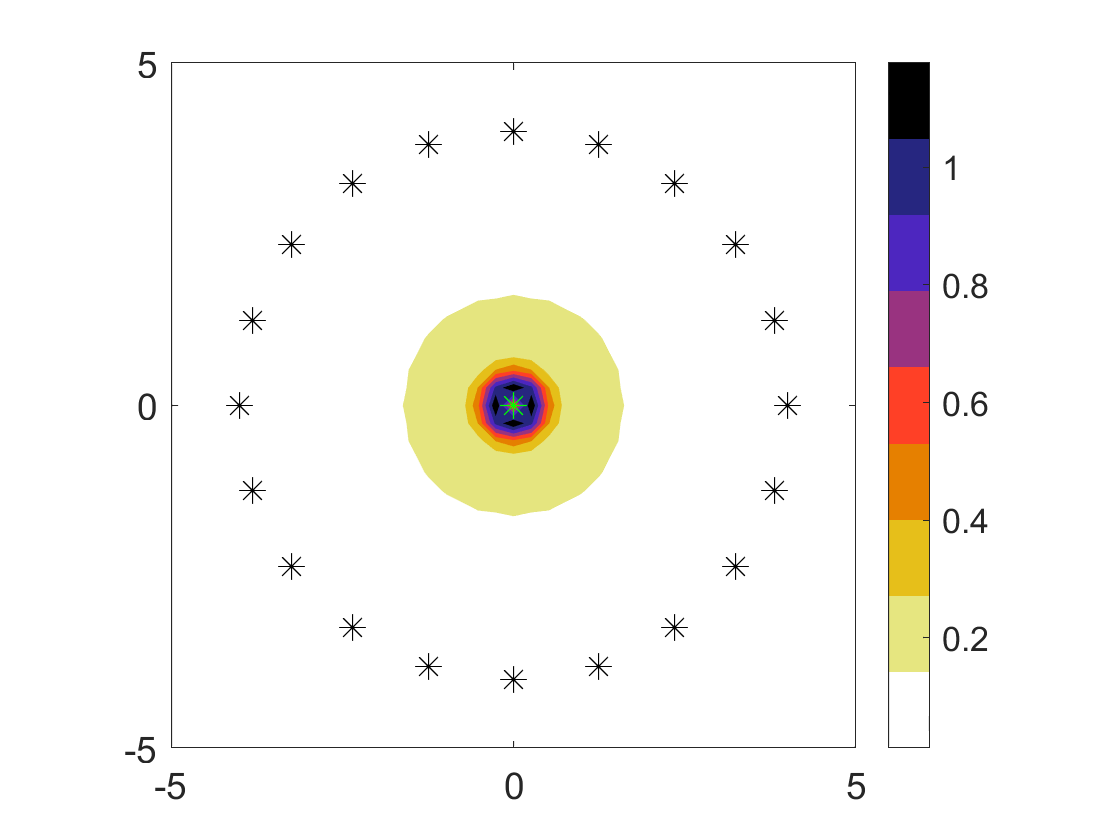}
		& \includegraphics[width=0.33\textwidth]{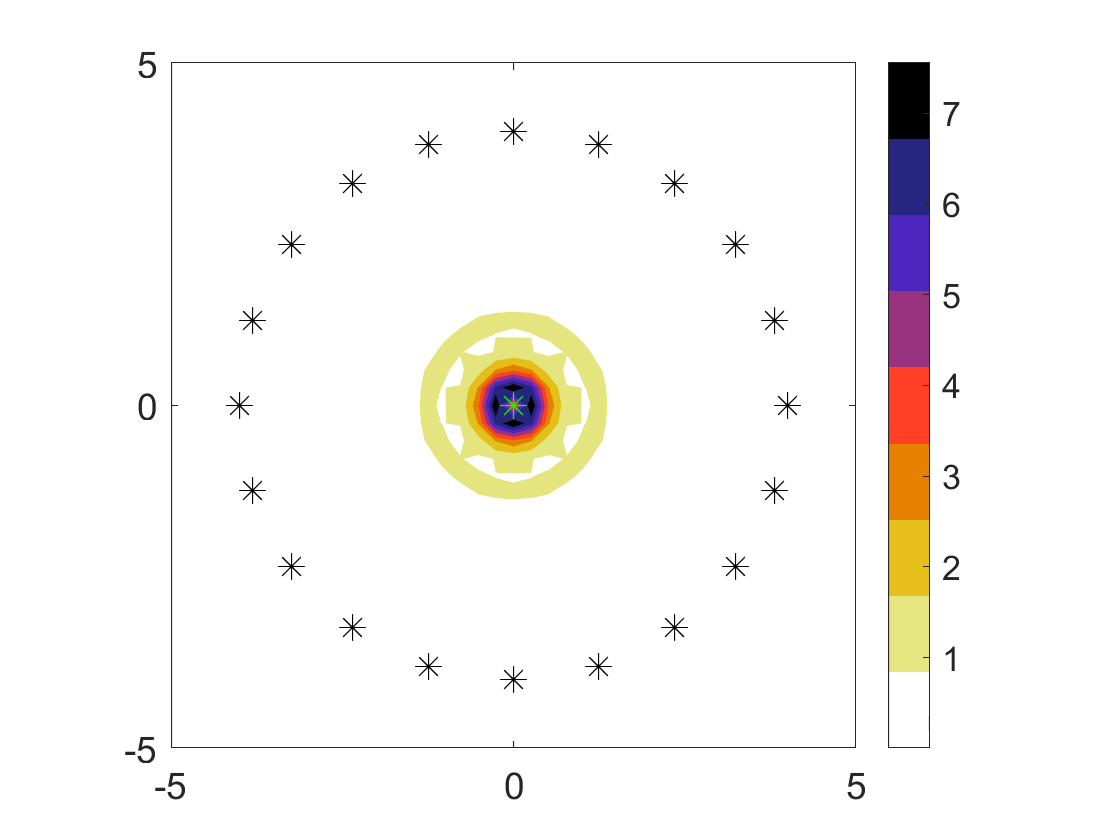}
		& \includegraphics[width=0.33\textwidth]{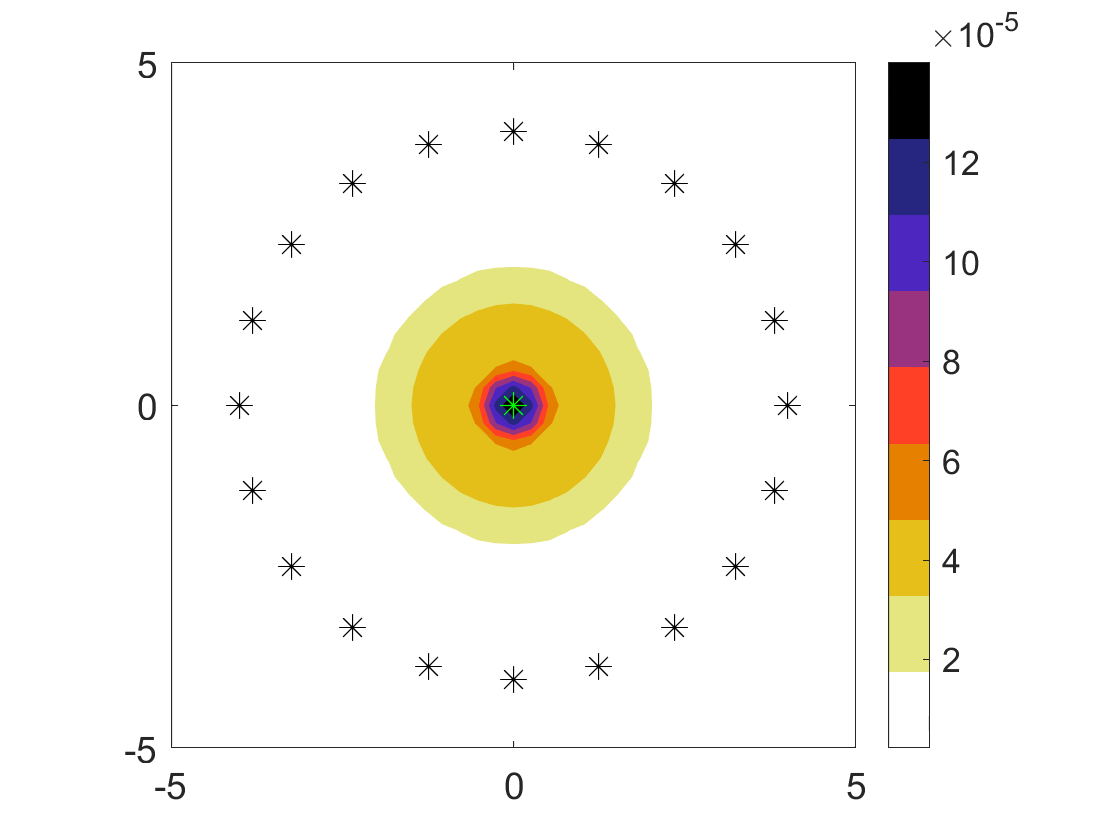} \\
		(a)~$I_1,~\varepsilon=5\%$ & (b)~$I_2,~\varepsilon=5\%$ & (c)~$I_3,~\varepsilon=5\%$ \\
		\includegraphics[width=0.33\textwidth]{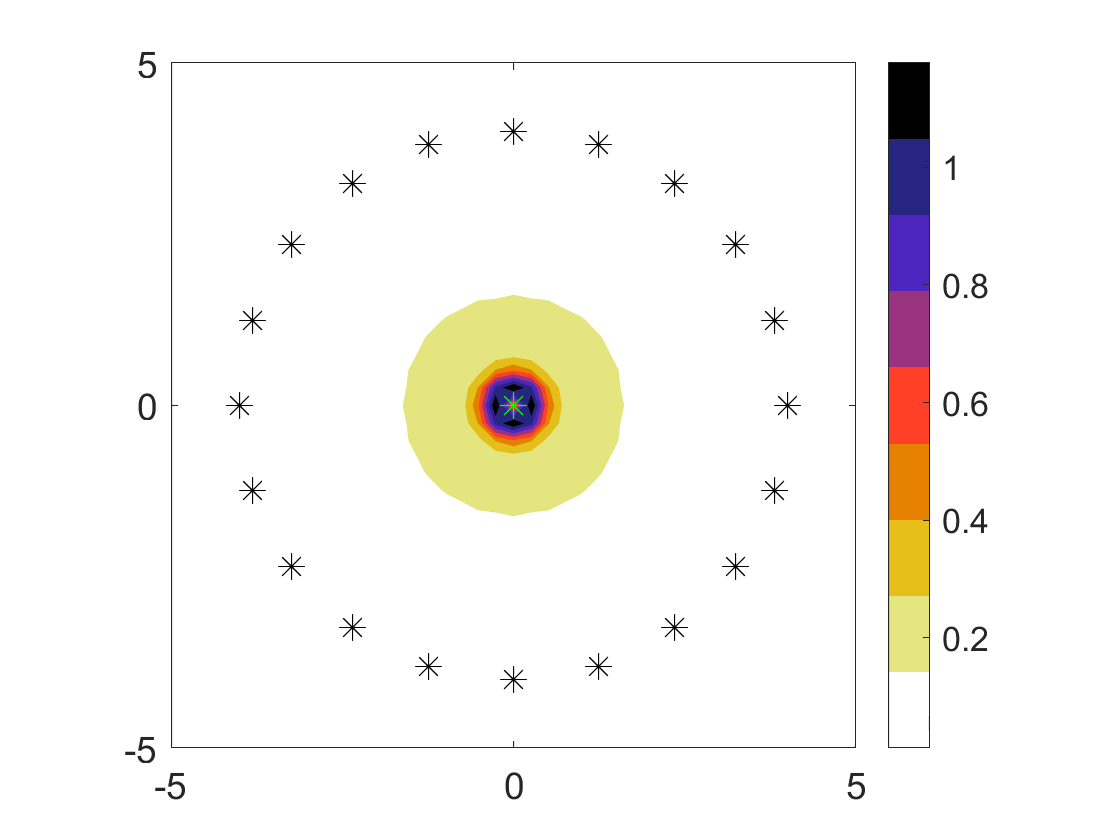}
		& \includegraphics[width=0.33\textwidth]{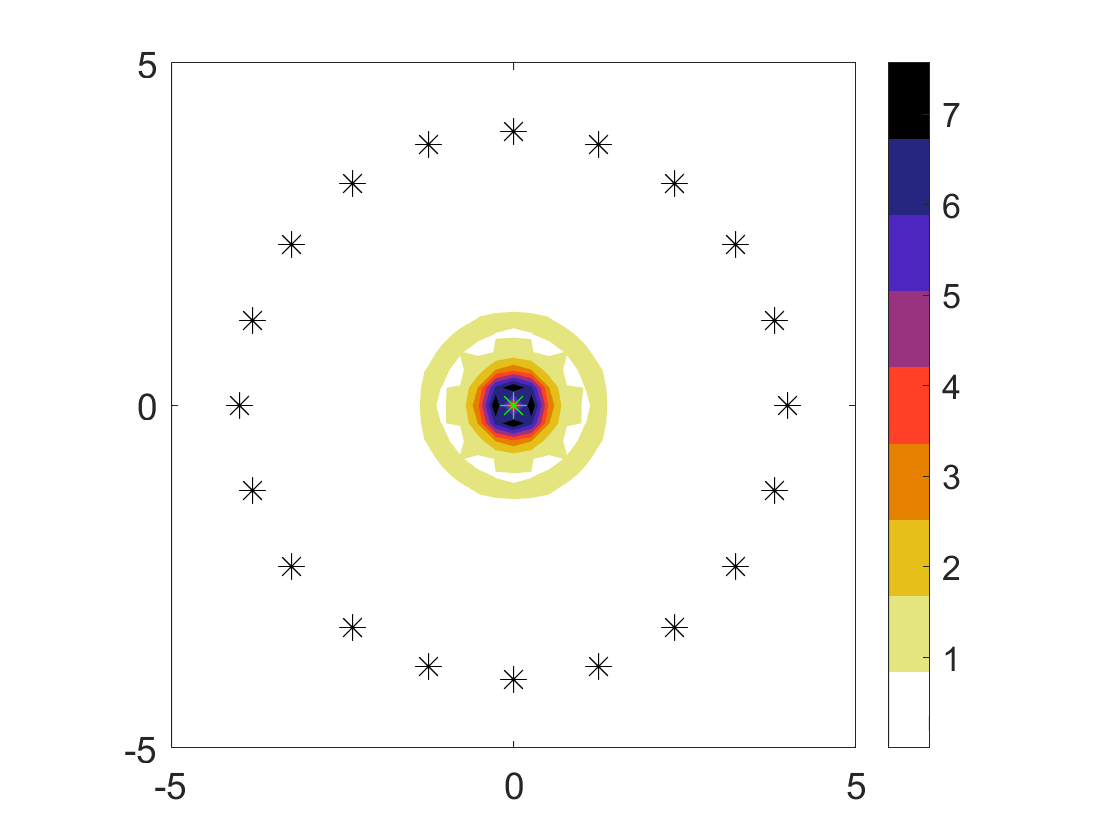}
		& \includegraphics[width=0.33\textwidth]{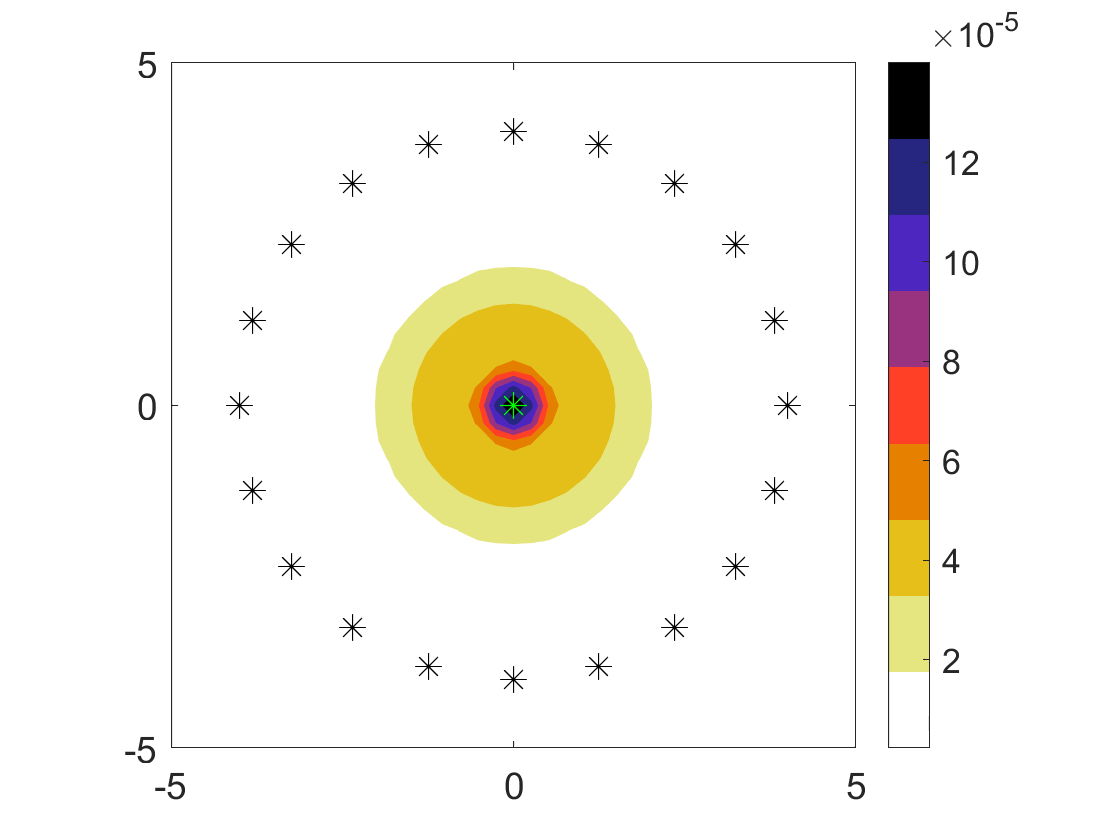} \\
		(d)~$I_1,~\varepsilon=20\%$ & (e)~$I_2,~\varepsilon=20\%$ & (f)~$I_3,~\varepsilon=20\%$
	\end{tabular}
	\caption{Reconstructions of a single point-like scatterer centered at $(0,0)$ with different indicators and noise levels. }\label{fig-i1i2i3-point}
\end{figure}
\end{example}

\begin{example}\textbf{Reconstructions of multiple point-like scatterers}\label{example2}

The simultaneous reconstructions of multiple point-like scatterers are considered in this example. The boundaries of the scatterers are also parameterized as (\ref{point}) with different centers $(a,b)$. Figure \ref{fig-points} shows the reconstructions of different numbers of point-like scatterers with the noise level $\varepsilon=5\%$. The exact locations of the scatterers are marked with green asterisks.

\begin{figure}
	\centering
	\begin{tabular}{ccc}
		\includegraphics[width=0.33\textwidth]{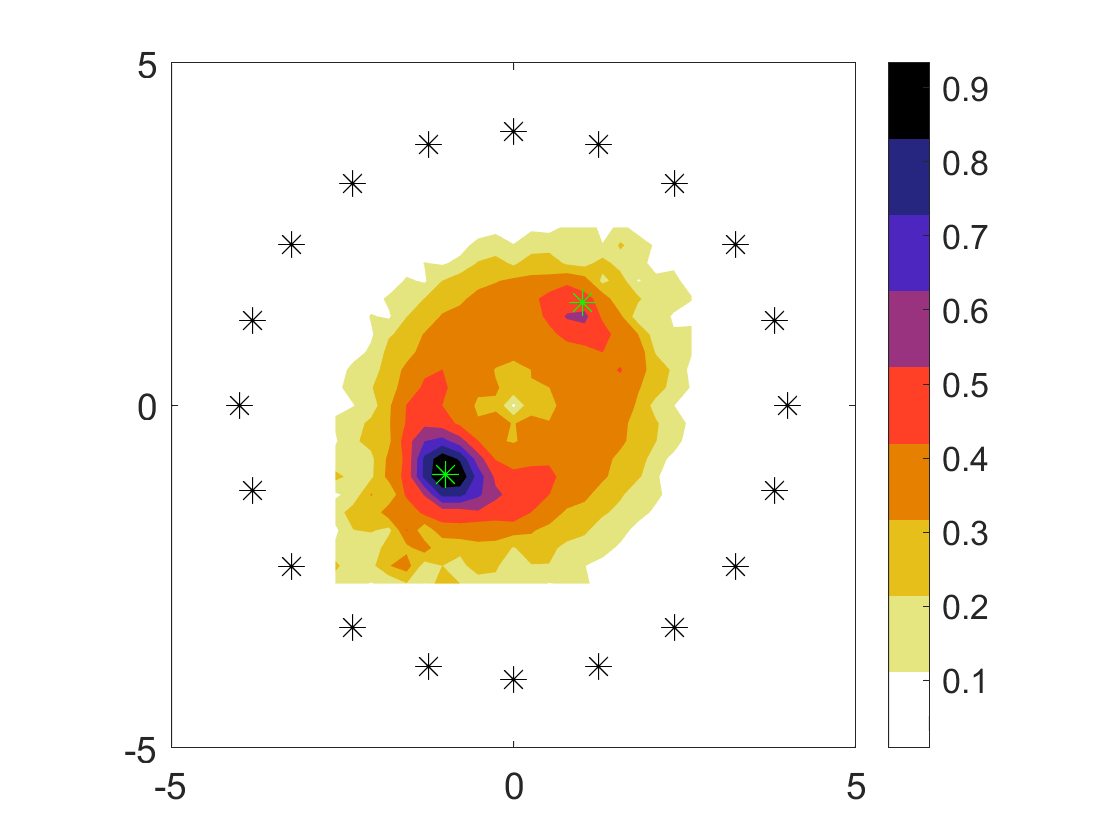}
		& \includegraphics[width=0.33\textwidth]{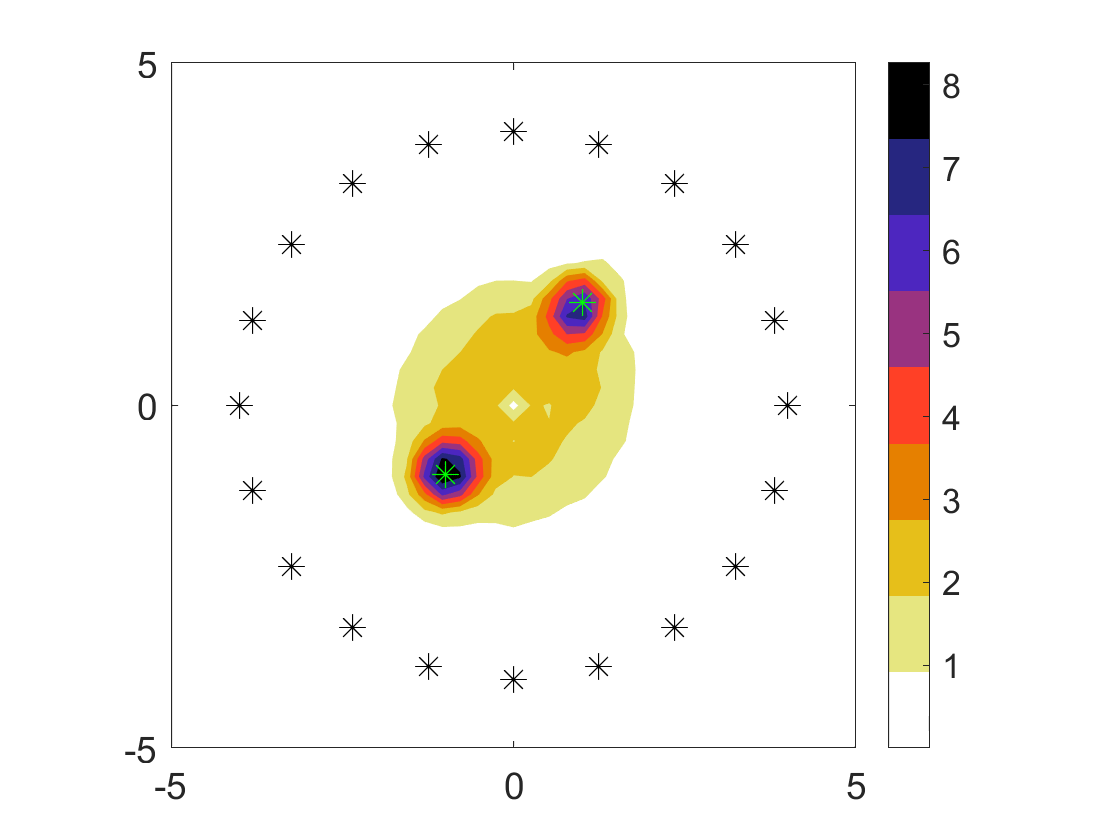}
		& \includegraphics[width=0.33\textwidth]{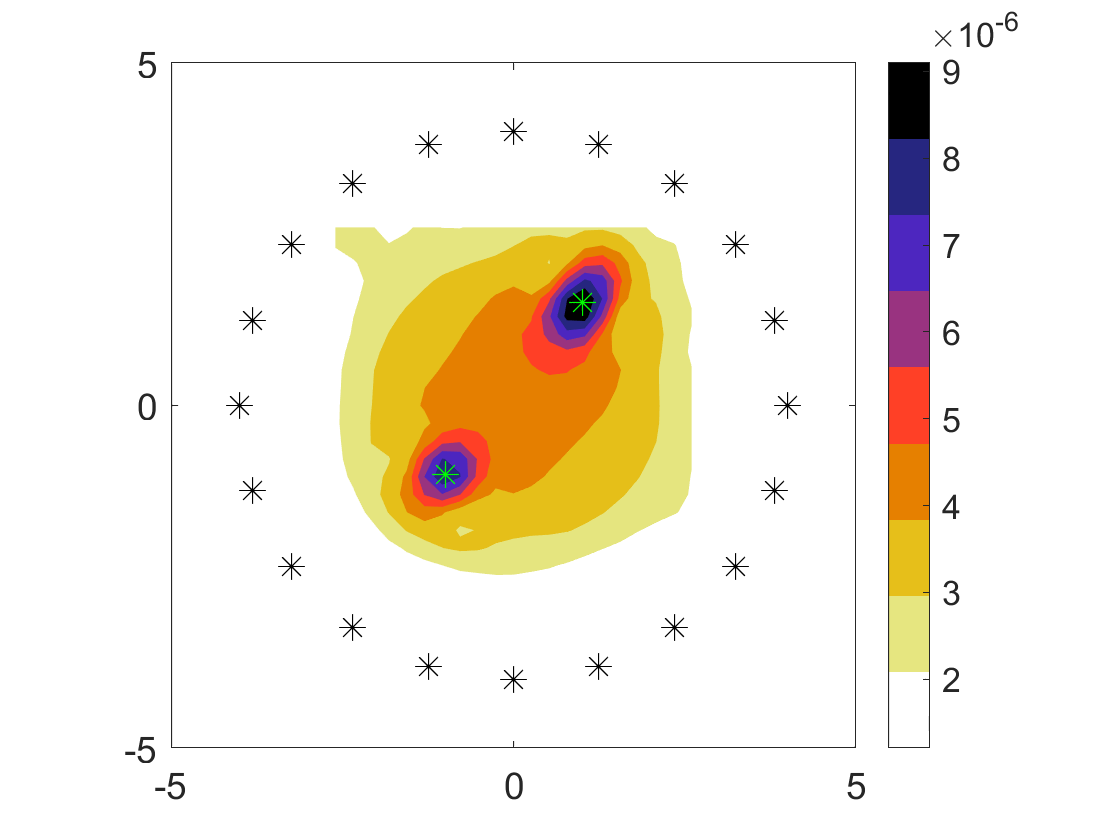} \\
		(a)~$I_1$ & (b)~$I_2$ & (c)~$I_3$ \\
		\includegraphics[width=0.33\textwidth]{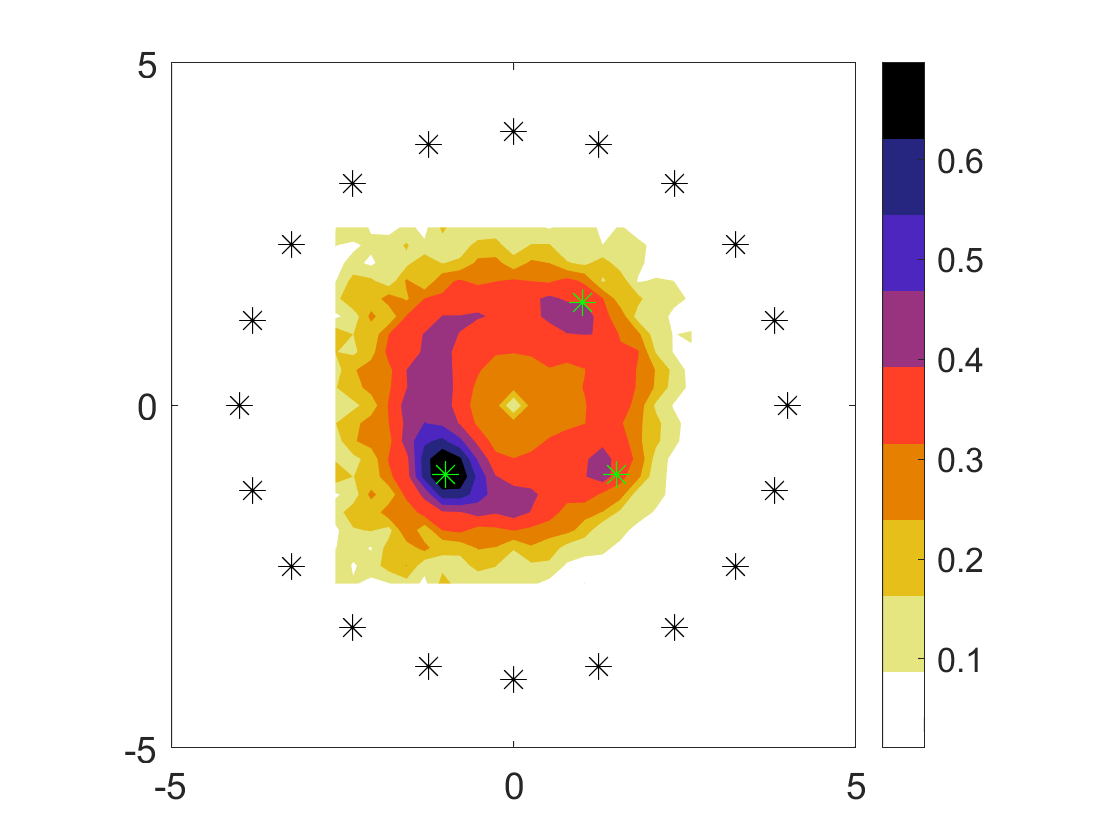}
		& \includegraphics[width=0.33\textwidth]{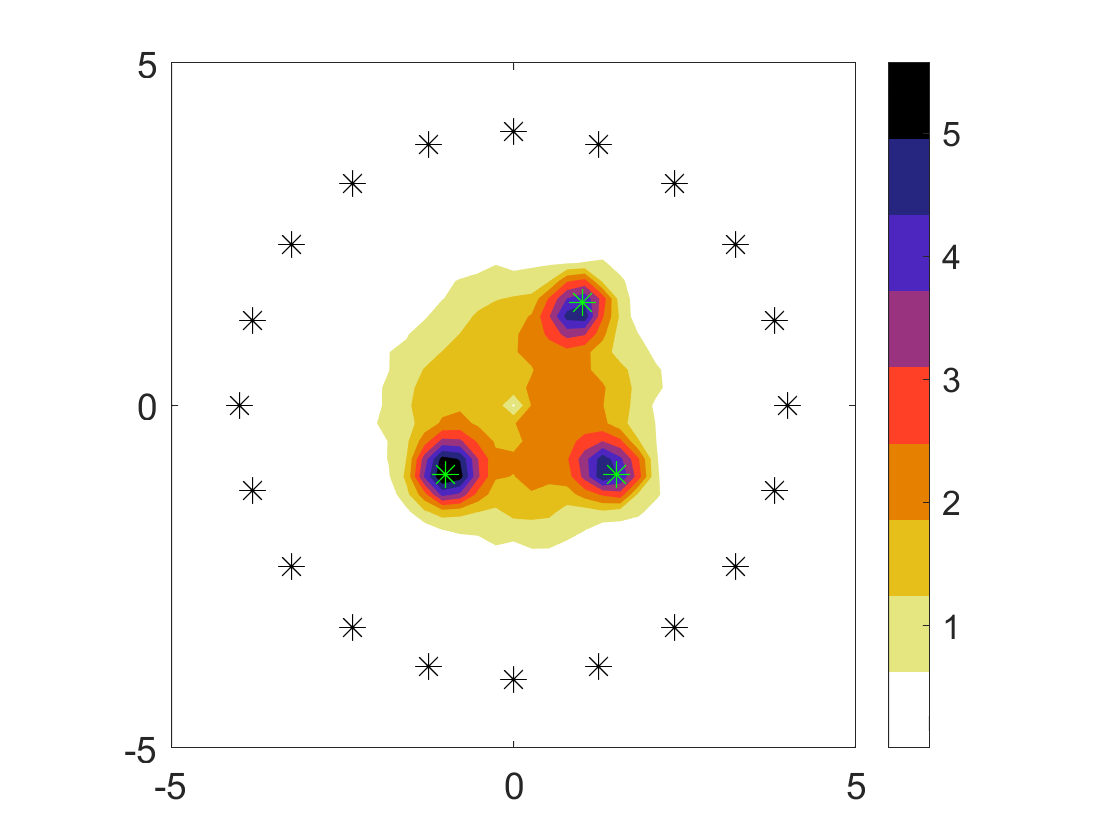}
		& \includegraphics[width=0.33\textwidth]{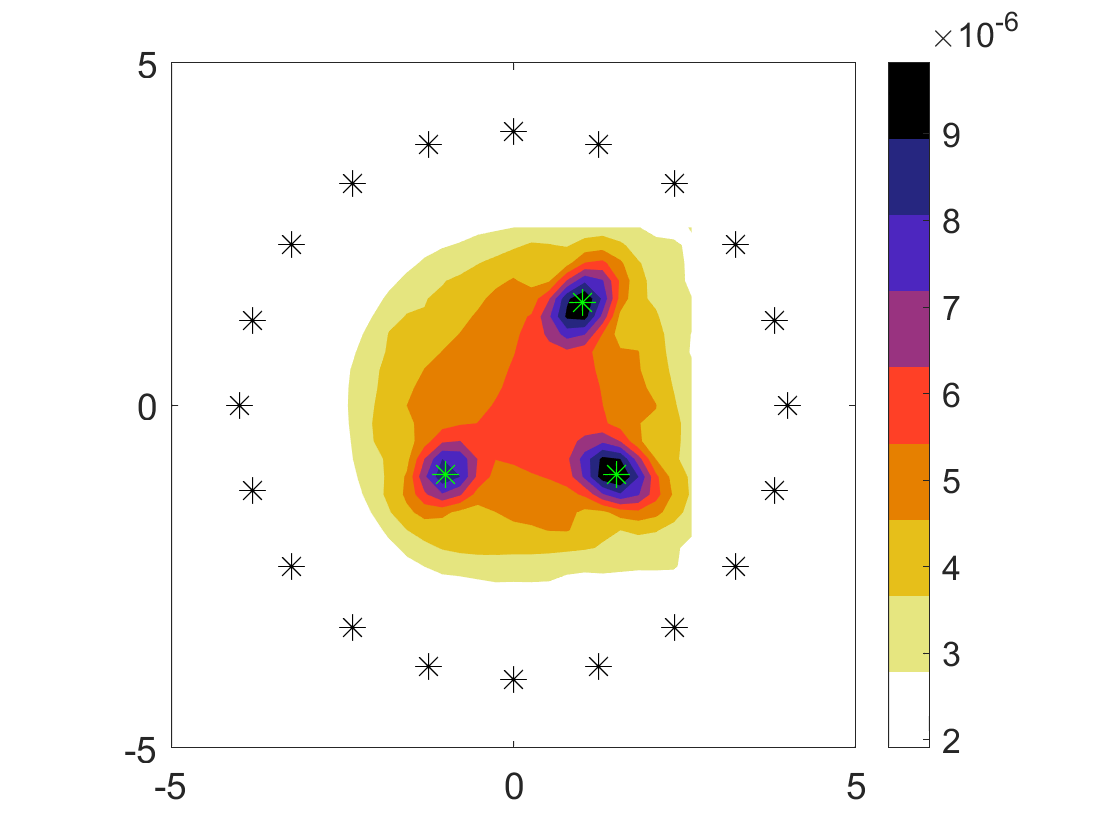} \\
		(d)~$I_1$ & (e)~$I_2$ & (f)~$I_3$ \\
		\includegraphics[width=0.33\textwidth]{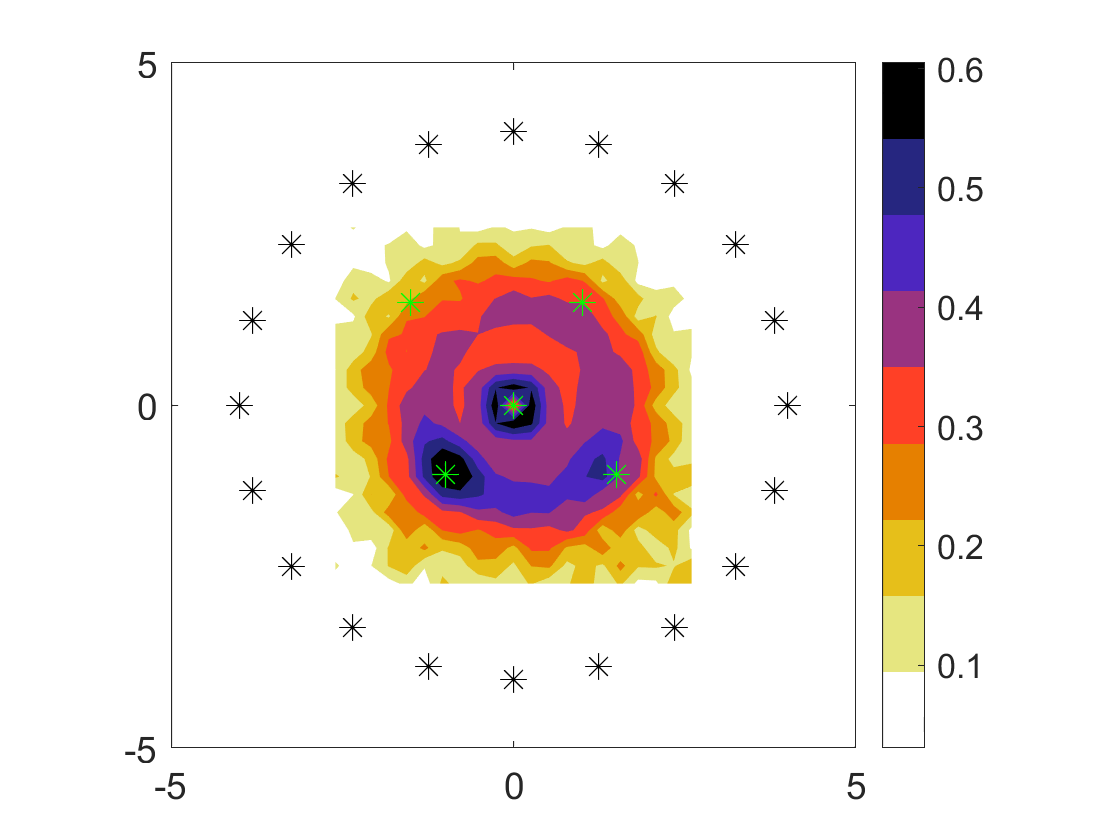}
		& \includegraphics[width=0.33\textwidth]{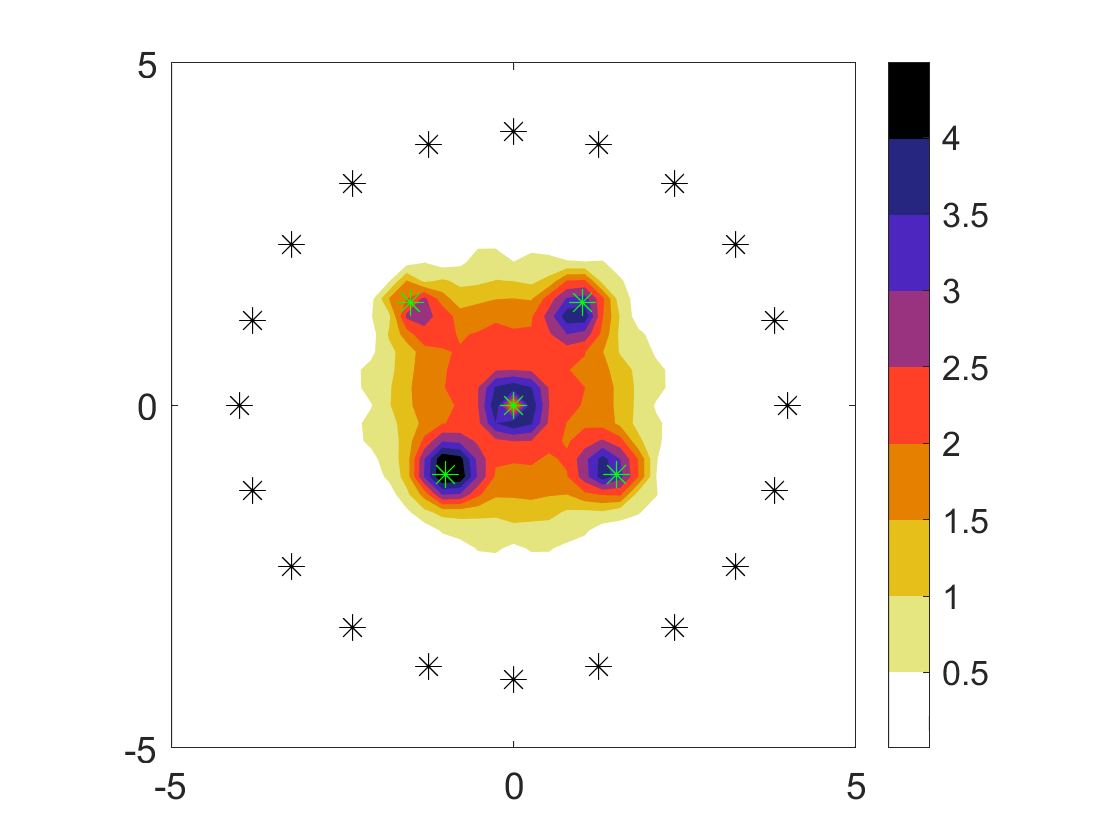}
		& \includegraphics[width=0.33\textwidth]{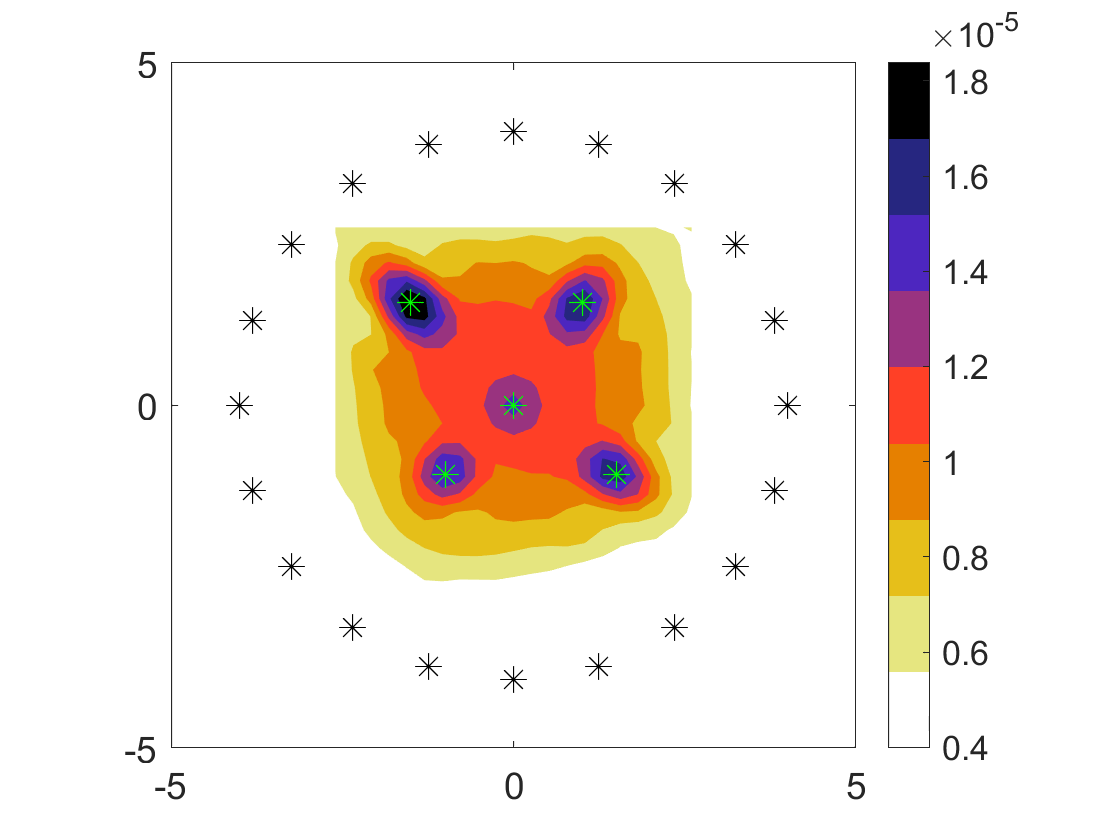} \\
		(g)~$I_1$ & (h)~$I_2$ & (i)~$I_3$
	\end{tabular}
	\caption{Reconstructions of multiple point-like scatterers using different indicator functions, $\varepsilon=5\%$. (a-c)~Reconstructions of $2$ point-like scatterers centered at $(-1,-1)$ and $(1,1.5)$, respectively. (d-f)~Reconstructions of $3$ point-like scatterers centered at $(-1,-1)$, $(1,1.5)$ and $(1.5,-1)$, respectively. (g-i)~Reconstructions of $5$ point-like scatterers centered at $(-1,-1)$, $(1,1.5)$, $(1.5,-1)$, $(-1.5,1.5)$ and $(0,0)$, respectively.}\label{fig-points}
\end{figure}
\end{example}

\begin{example}\textbf{Reconstructions of a normal size scatterer}\label{example3}

The reconstructions of a normal size scatterer from both full and limited aperture scattered data are considered in this example. The boundaries of the scatterers, such as the circle-shaped scatterer, the kite-shaped scatterer and the starfish-shaped scatterer, can be parameterized as
\begin{align}
	\label{circle} Circle:~& s(\theta)=(a,b)+1.5(\cos\theta,\sin\theta),\theta\in[0,2\pi),\\
	\label{kite} Kite:~& s(\theta)=(a,b)+(\cos\theta+0.65\cos2{\theta}-0.65,1.5 {\sin\theta}), \theta\in[0,2\pi),\\
	\label{starfish} Starfish:~& s(\theta)=(a,b)+(1+0.2\cos5\theta)(\cos\theta,\sin\theta), \theta\in[0,2\pi).
\end{align}
The reconstructions of the three kinds of scatterers with full aperture data and the reconstructions of the starfish-shaped scatterer using limited aperture data are provided in Figure \ref{fig-circle}-Figure \ref{fig-starfish-lim}. For the reconstructions from limited aperture data, $N=10$ and $N=15$ are chosen for the apertures $\theta=\pi$ and $\theta=\frac{3}{2}\pi$, respectively. The coordinates of the measurement points are
$$x_i=4\left(\cos{\frac{i\pi}{9}},\sin{\frac{i\pi}{9}}\right),\quad i=0,1,\cdots,9$$
for the aperture $\theta=\pi$ and
$$x_i=4\left(\cos{\frac{3i\pi}{28}},\sin{\frac{3i\pi}{28}}\right),\quad i=0,1,\cdots,14$$
for the aperture $\theta=\frac{3}{2}\pi$.
The exact boundaries of the scatterers are marked with green dash lines. From Figure \ref{fig-circle}-Figure \ref{fig-starfish-lim}, we can find that Algorithm 1 and Algorithm 2 can roughly reconstruct a normal size scatterer centered at $(a,b)=(0,0)$, but are not feasible to reconstruct a scatterer that is not centered at the origin. Nevertheless, Algorithm 3 can reconstruct a normal size scatterer no matter the scatterer is centered at the origin or not.
For the reconstructions of the scatterers with limited aperture data, only the part of the boundary curve that is close to the measurement points can be well reconstructed.
\begin{figure}
	\centering
	\begin{tabular}{ccc}
		\includegraphics[width=0.33\textwidth]{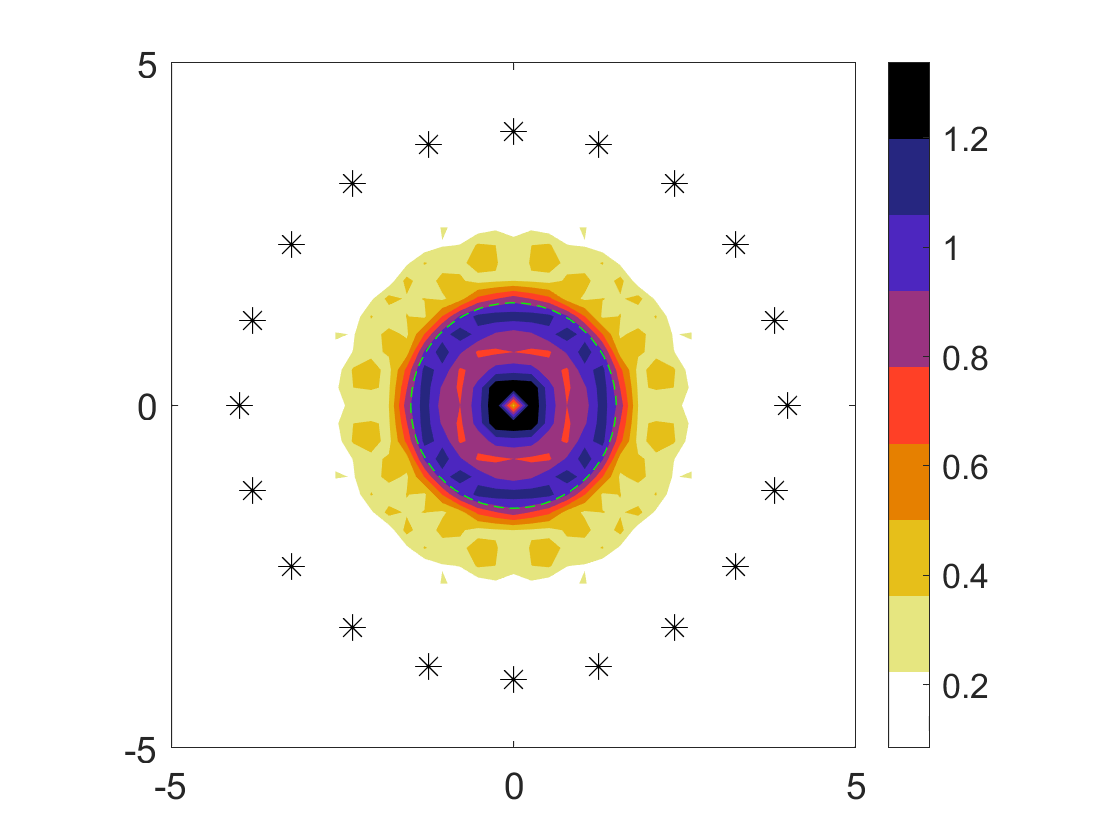}
		& \includegraphics[width=0.33\textwidth]{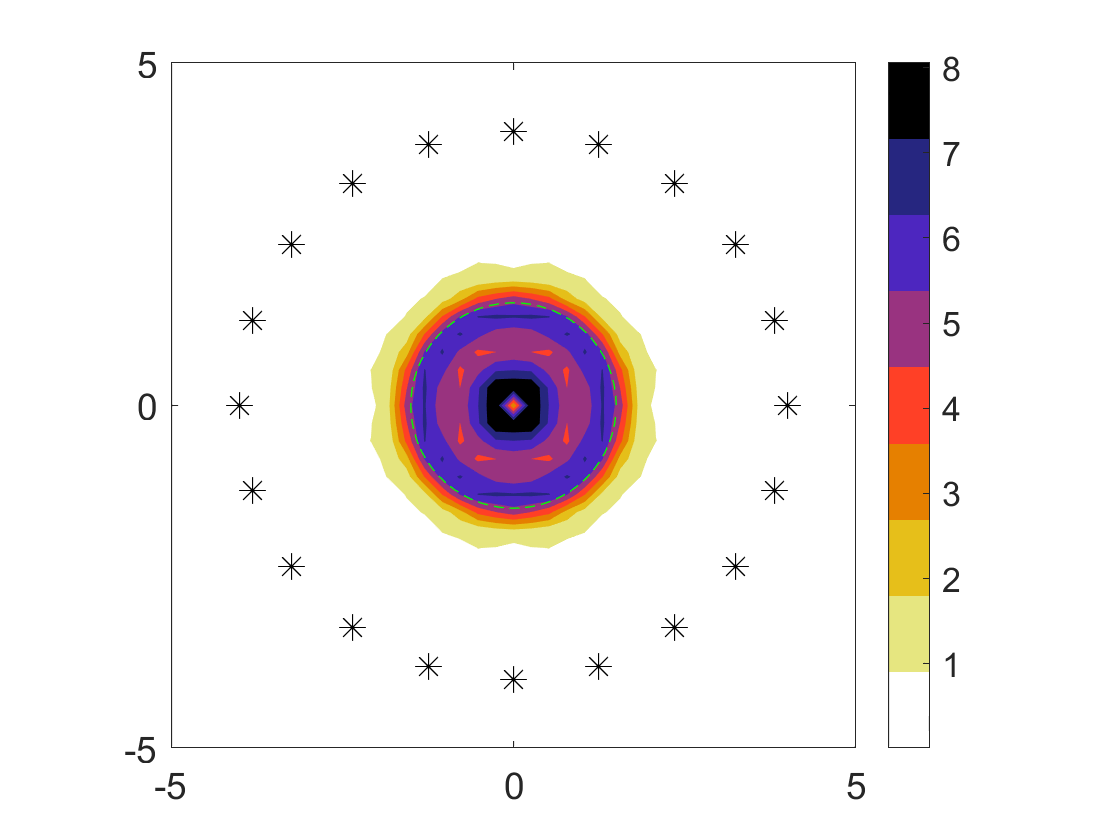}
		& \includegraphics[width=0.33\textwidth]{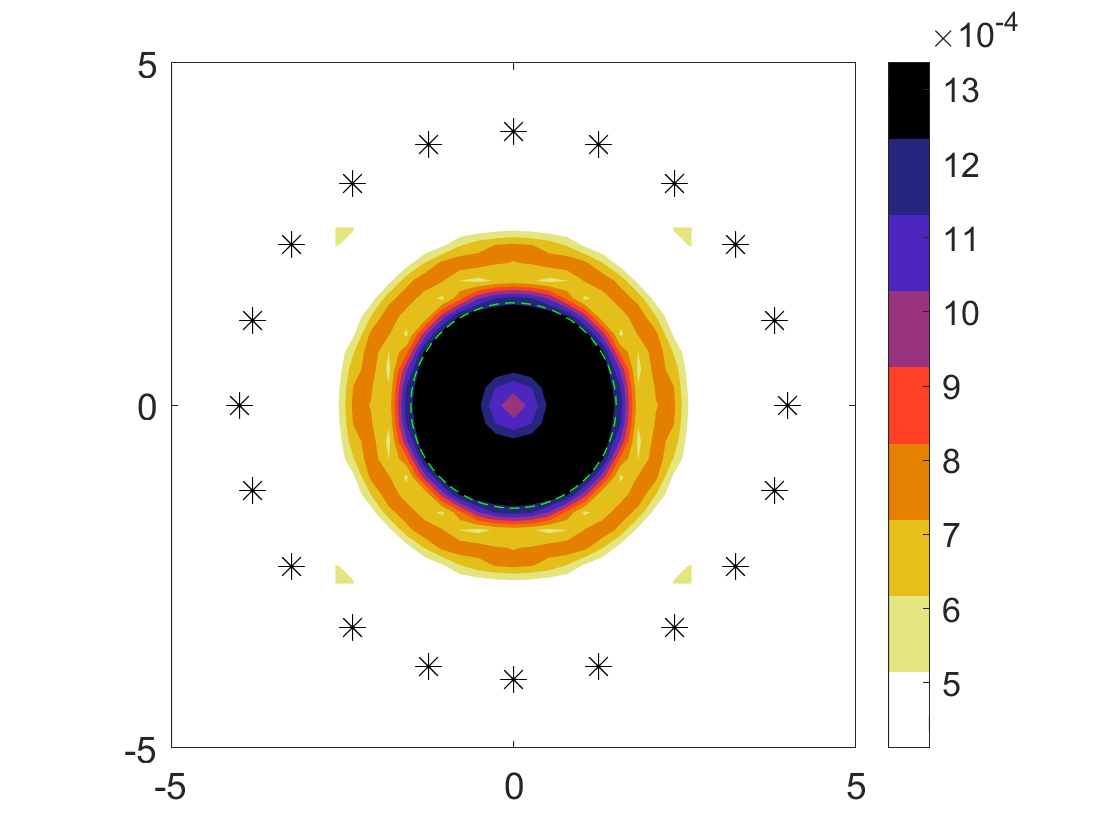} \\
		(a)~$I_1$ & (b)~$I_2$ & (c)~$I_3$ \\
		\includegraphics[width=0.33\textwidth]{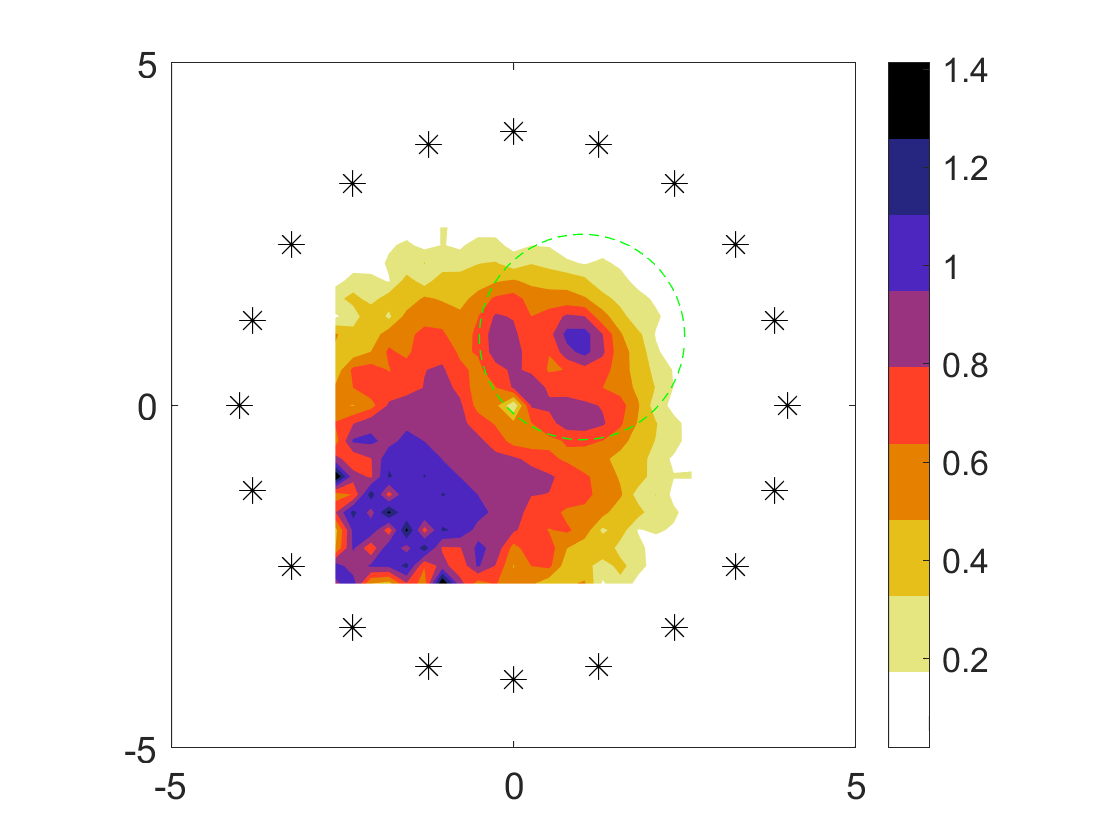}
		& \includegraphics[width=0.33\textwidth]{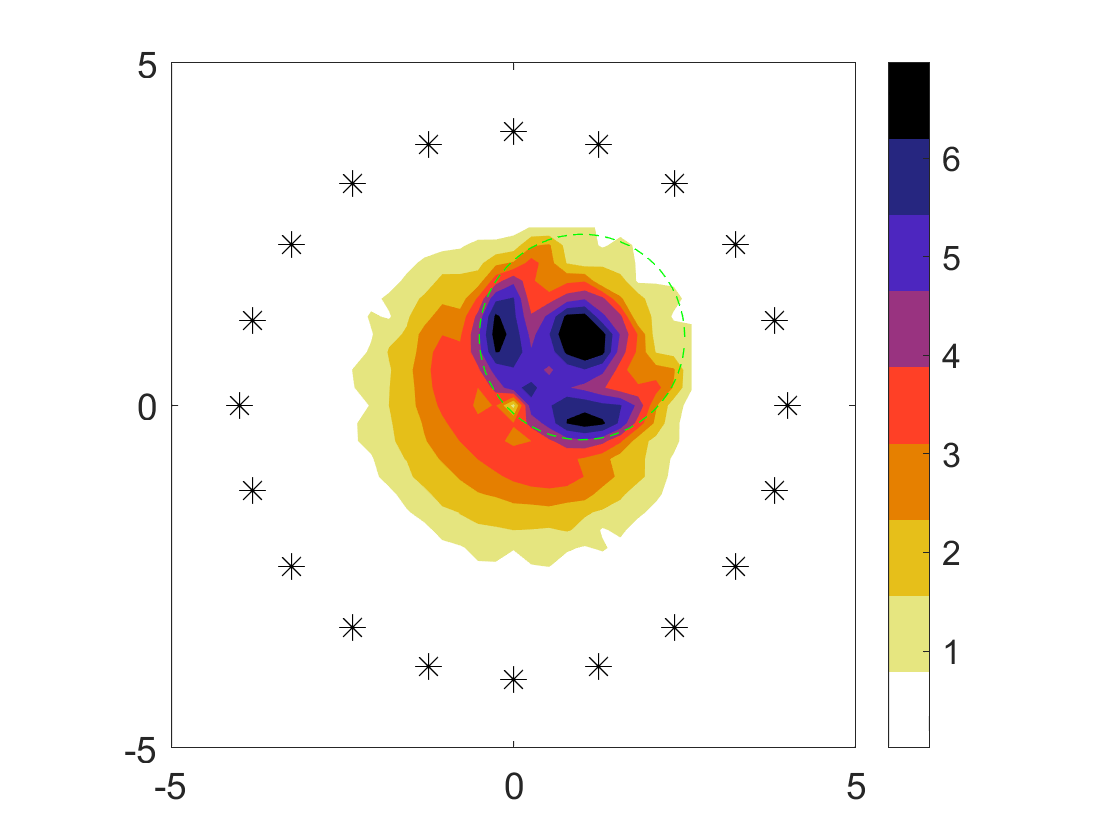}
		& \includegraphics[width=0.33\textwidth]{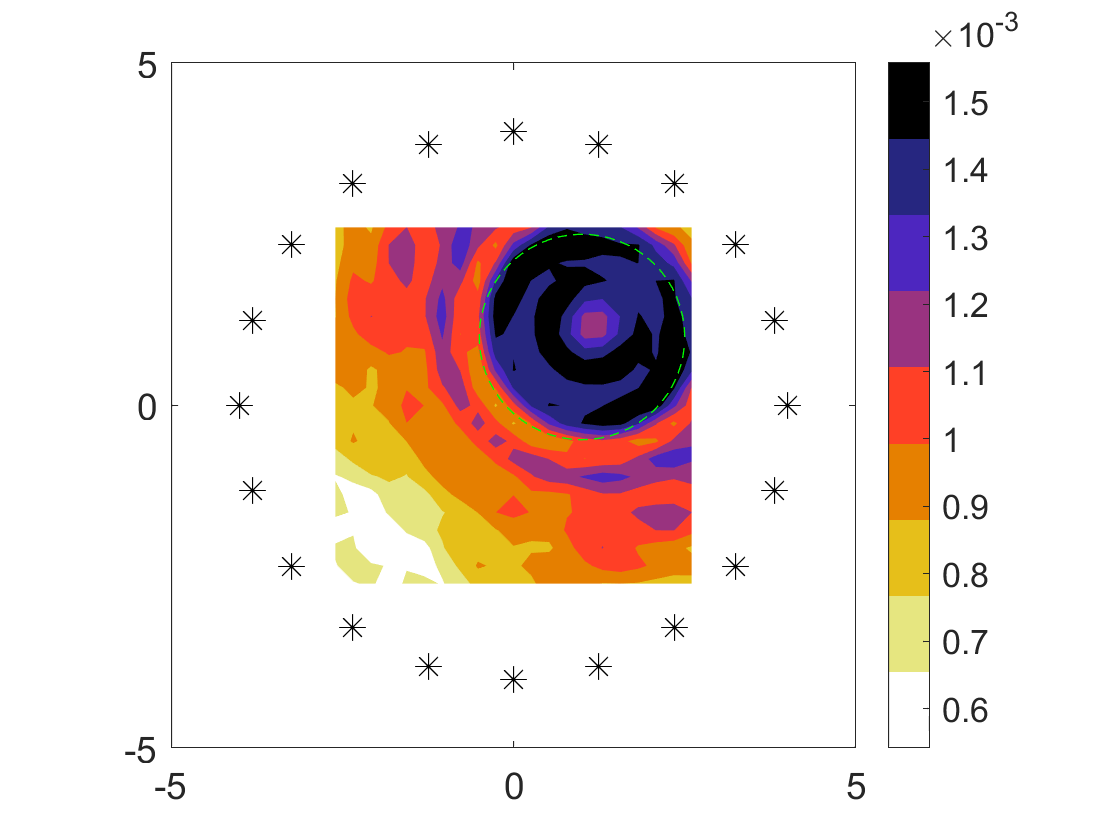} \\
		(d)~$I_1$ & (e)~$I_2$ & (f)~$I_3$
	\end{tabular}
	\caption{Reconstructions of circular scatterers using different indicator functions, $\varepsilon=5\%$. The circles are centered at $(0,0)$ in the first row and centered at $(1,1)$ in the second row.}\label{fig-circle}
\end{figure}

\begin{figure}
	\centering
	\begin{tabular}{ccc}
		\includegraphics[width=0.33\textwidth]{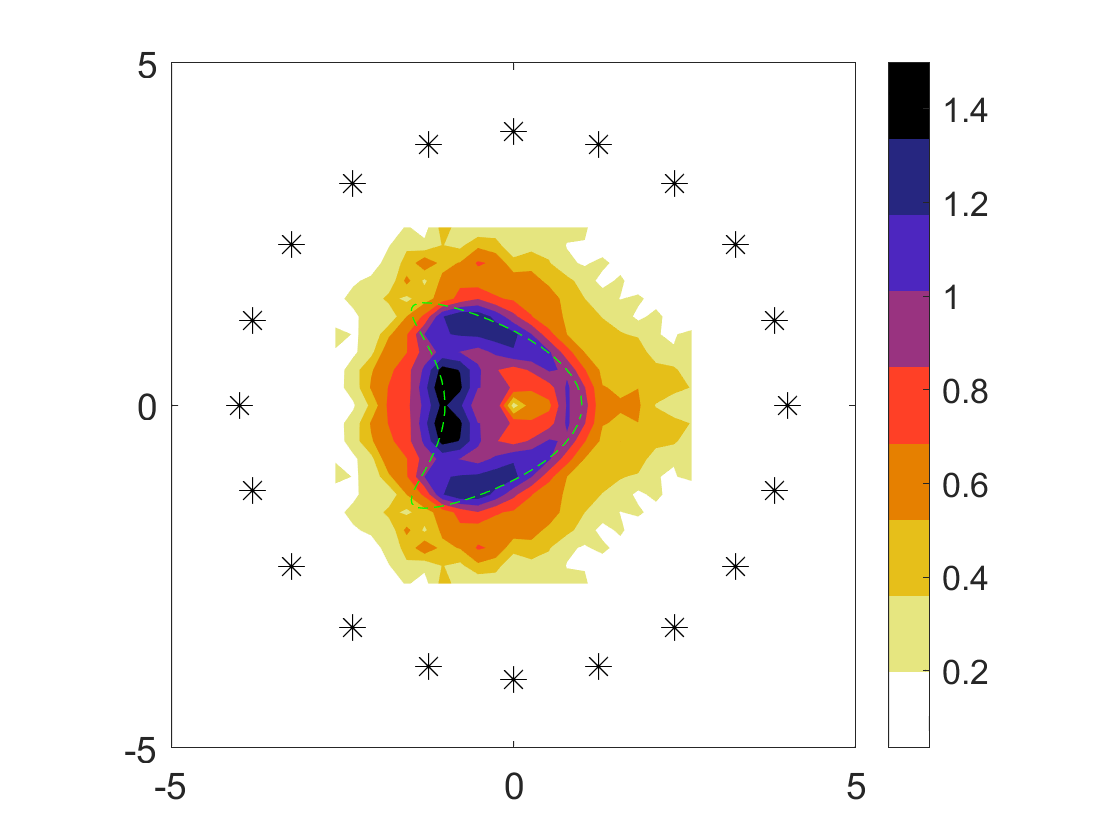}
		& \includegraphics[width=0.33\textwidth]{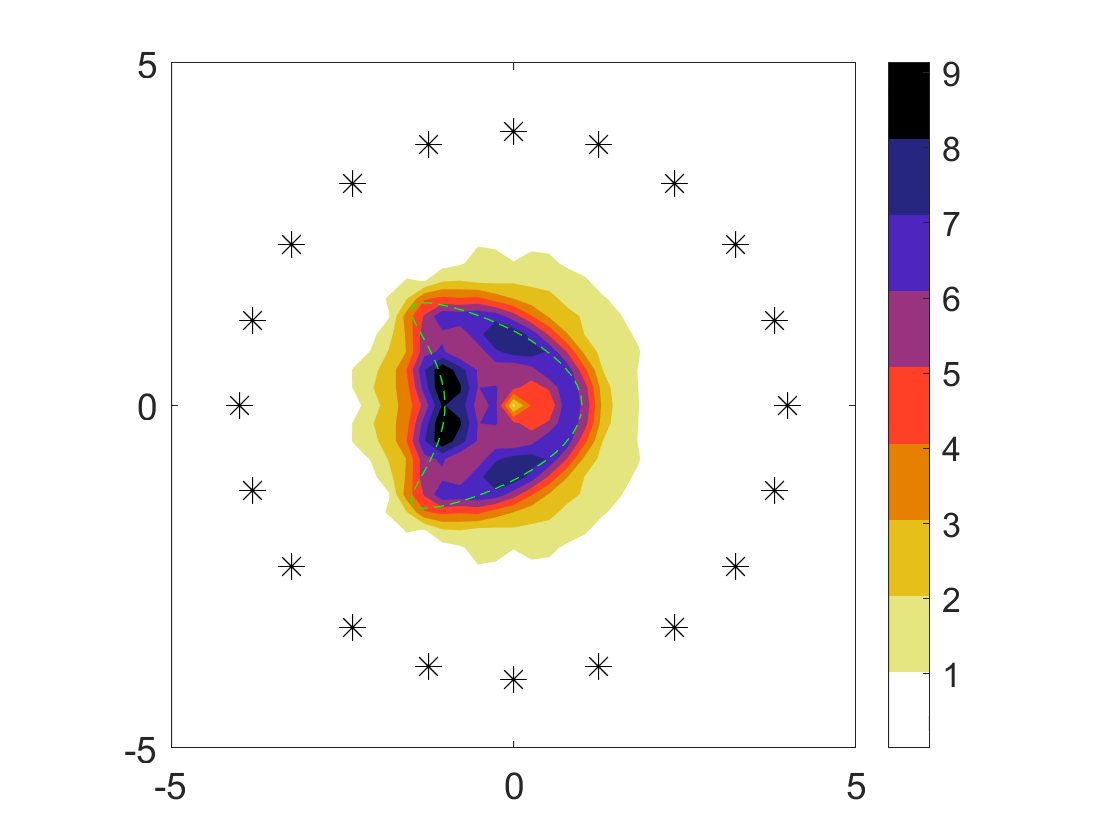}
		& \includegraphics[width=0.33\textwidth]{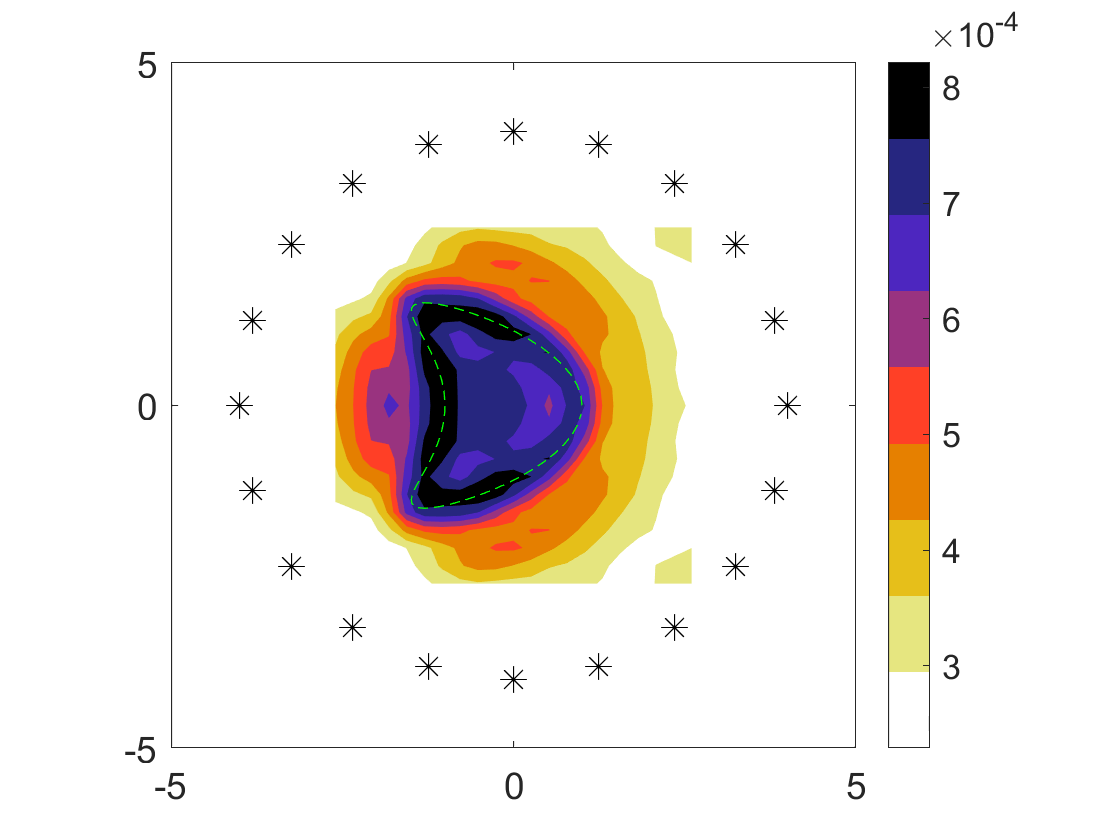} \\
		(a)~$I_1$ & (b)~$I_2$ & (c)~$I_3$ \\
		\includegraphics[width=0.33\textwidth]{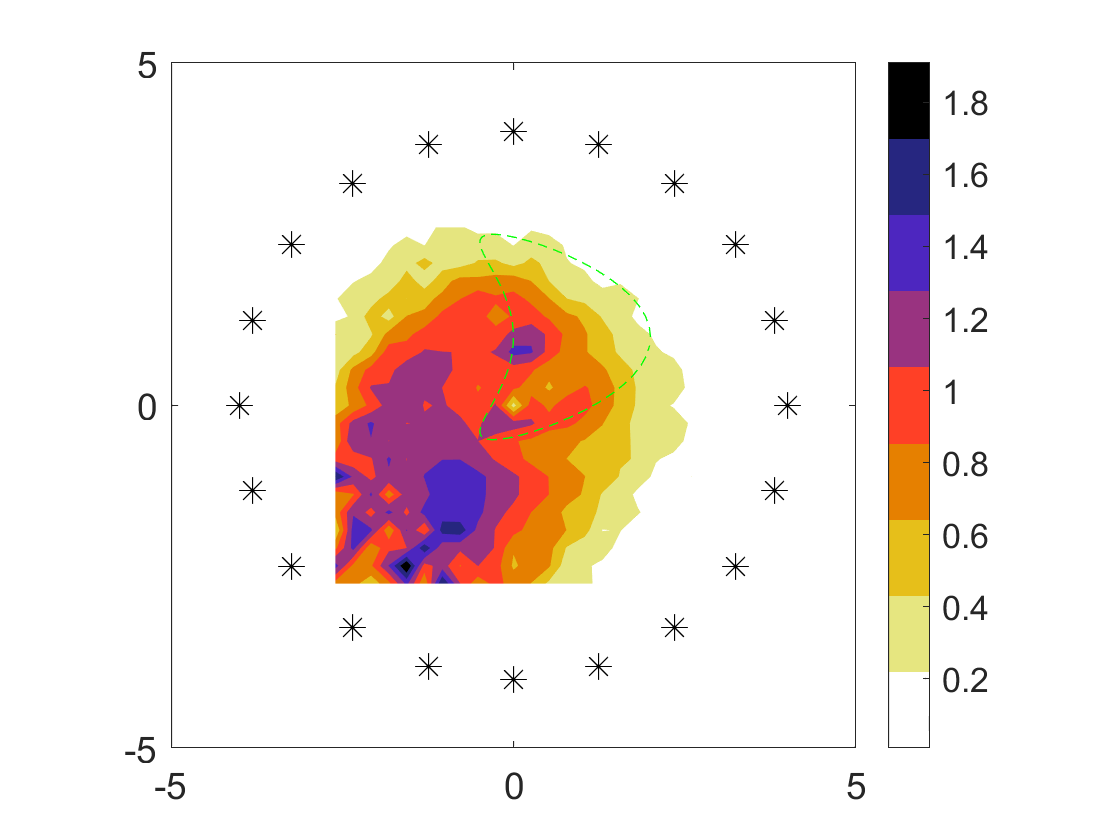}
		& \includegraphics[width=0.33\textwidth]{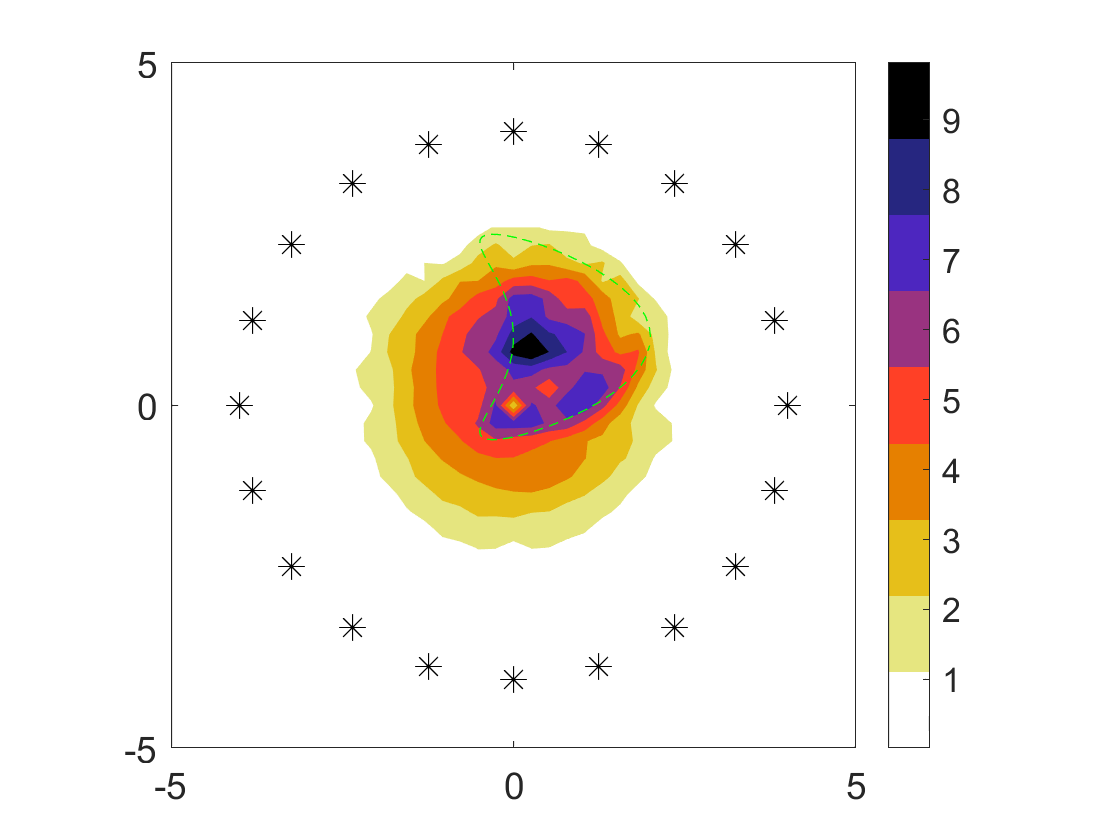}
		& \includegraphics[width=0.33\textwidth]{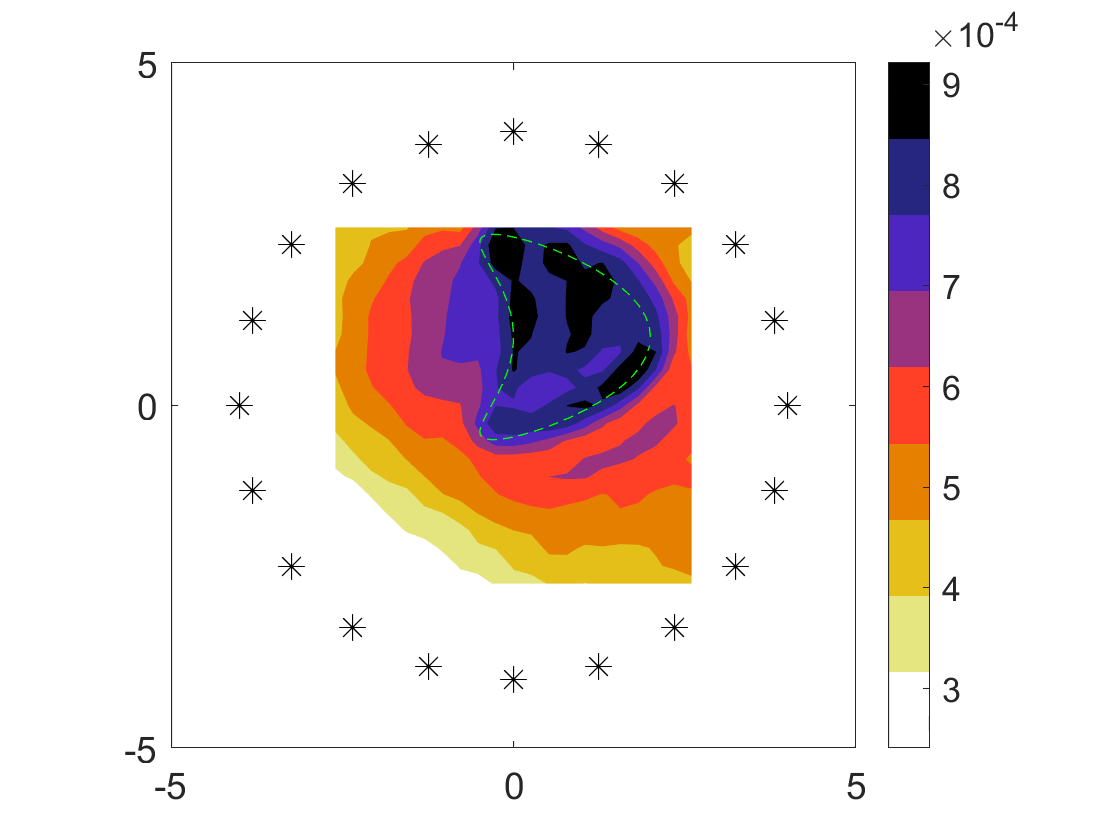} \\
		(d)~$I_1$ & (e)~$I_2$ & (f)~$I_3$
	\end{tabular}
	\caption{Reconstructions of kite-shaped scatterers using different indicator functions, $\varepsilon=5\%$. The kite is centered at $(0,0)$ in the first row and centered at $(1,1)$ in the second row.}\label{fig-kite}
\end{figure}

\begin{figure}
	\centering
	\begin{tabular}{ccc}
		\includegraphics[width=0.33\textwidth]{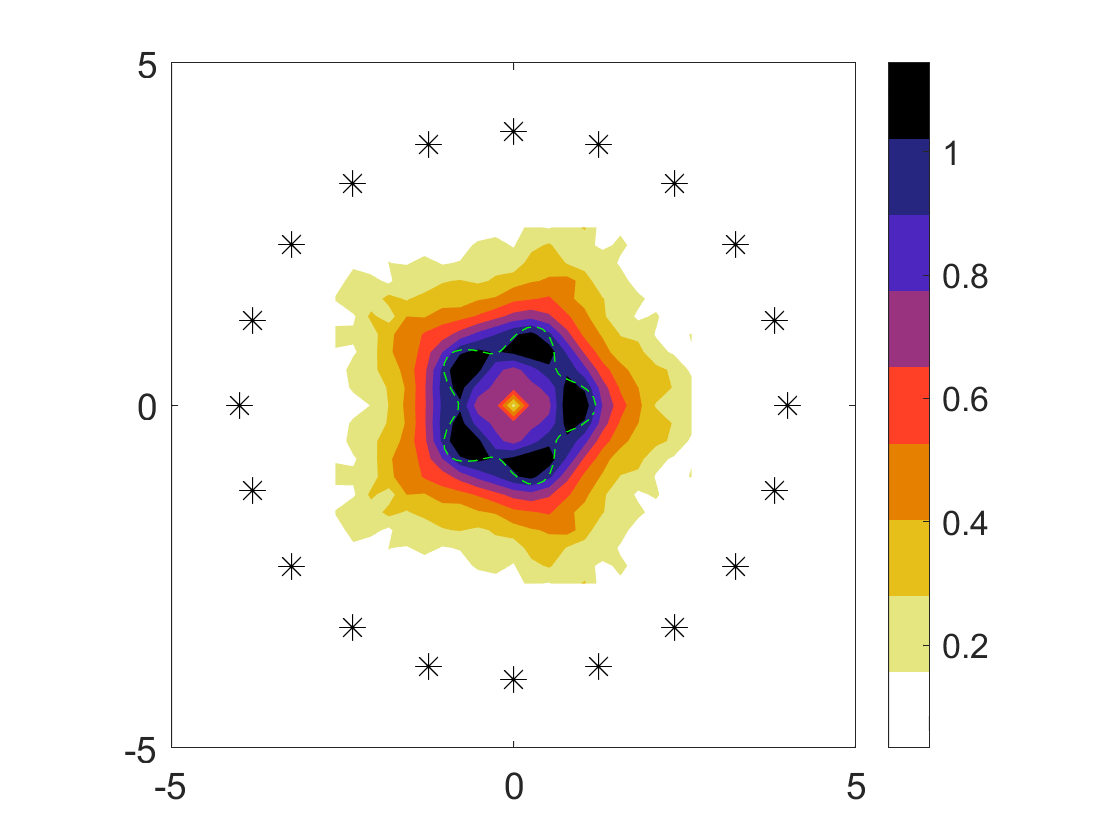}
		& \includegraphics[width=0.33\textwidth]{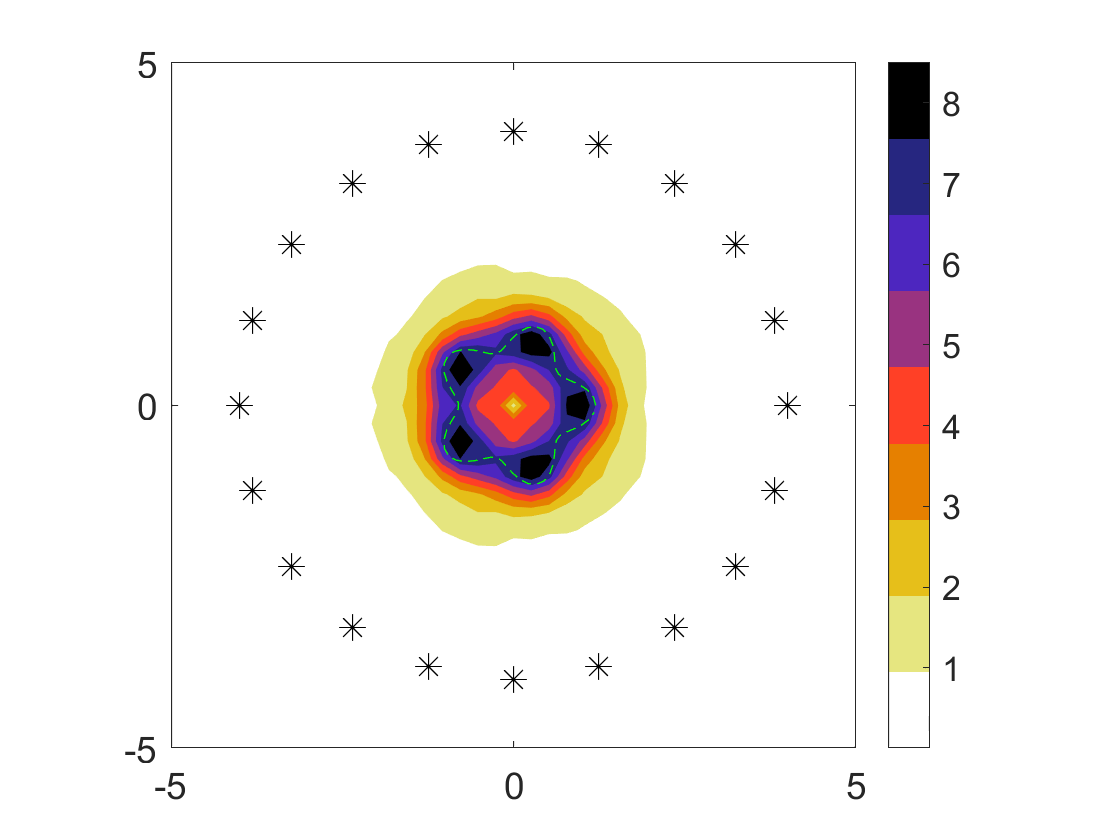}
		& \includegraphics[width=0.33\textwidth]{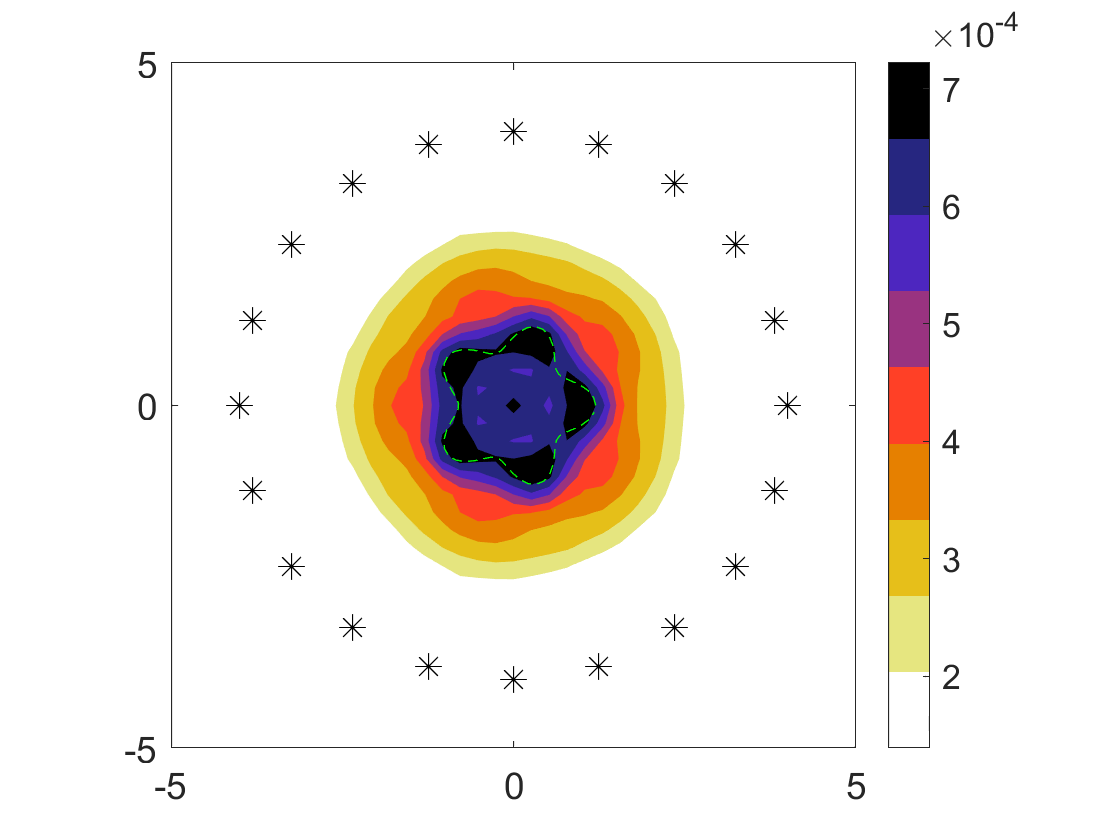} \\
		(a)~$I_1$ & (b)~$I_2$ & (c)~$I_3$ \\
		\includegraphics[width=0.33\textwidth]{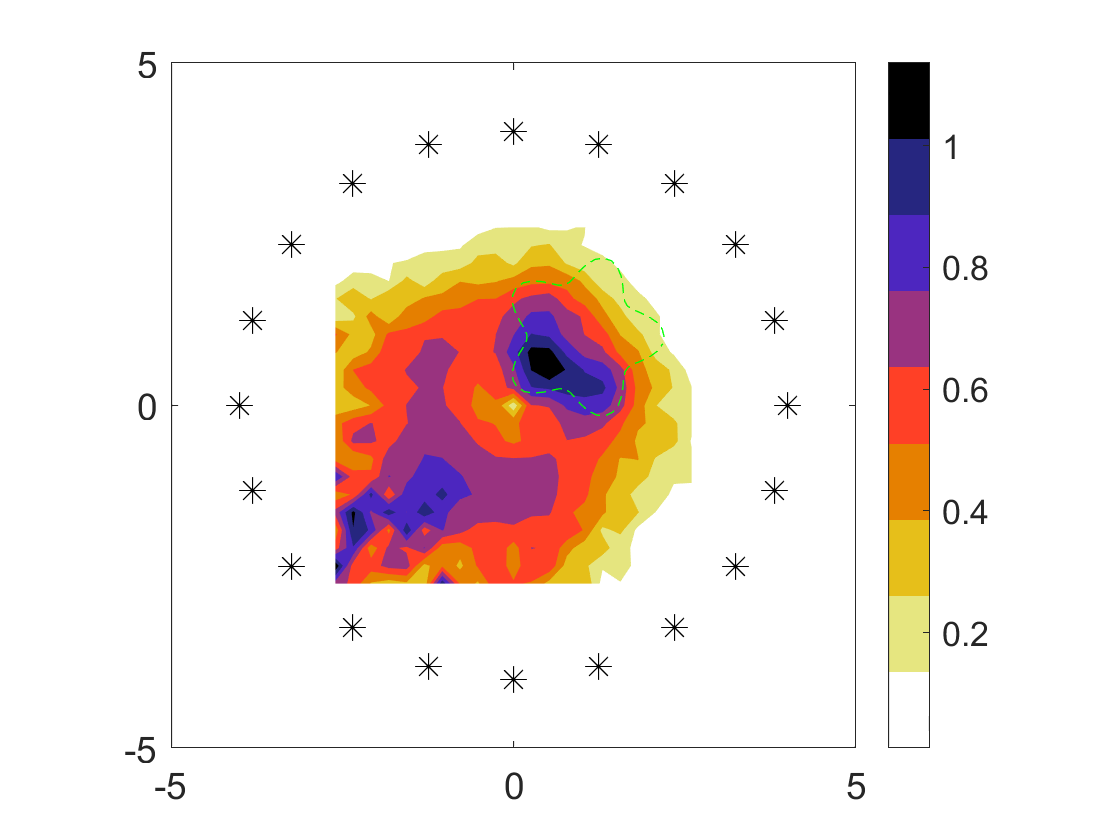}
		& \includegraphics[width=0.33\textwidth]{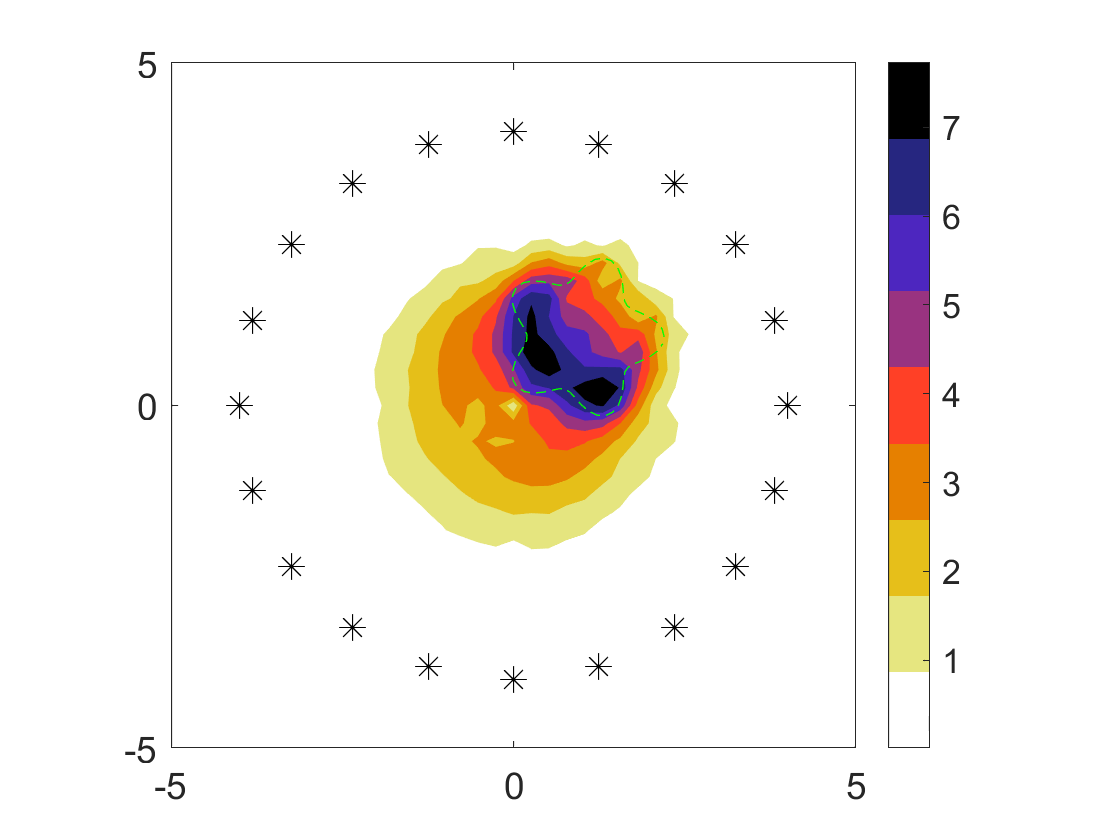}
		& \includegraphics[width=0.33\textwidth]{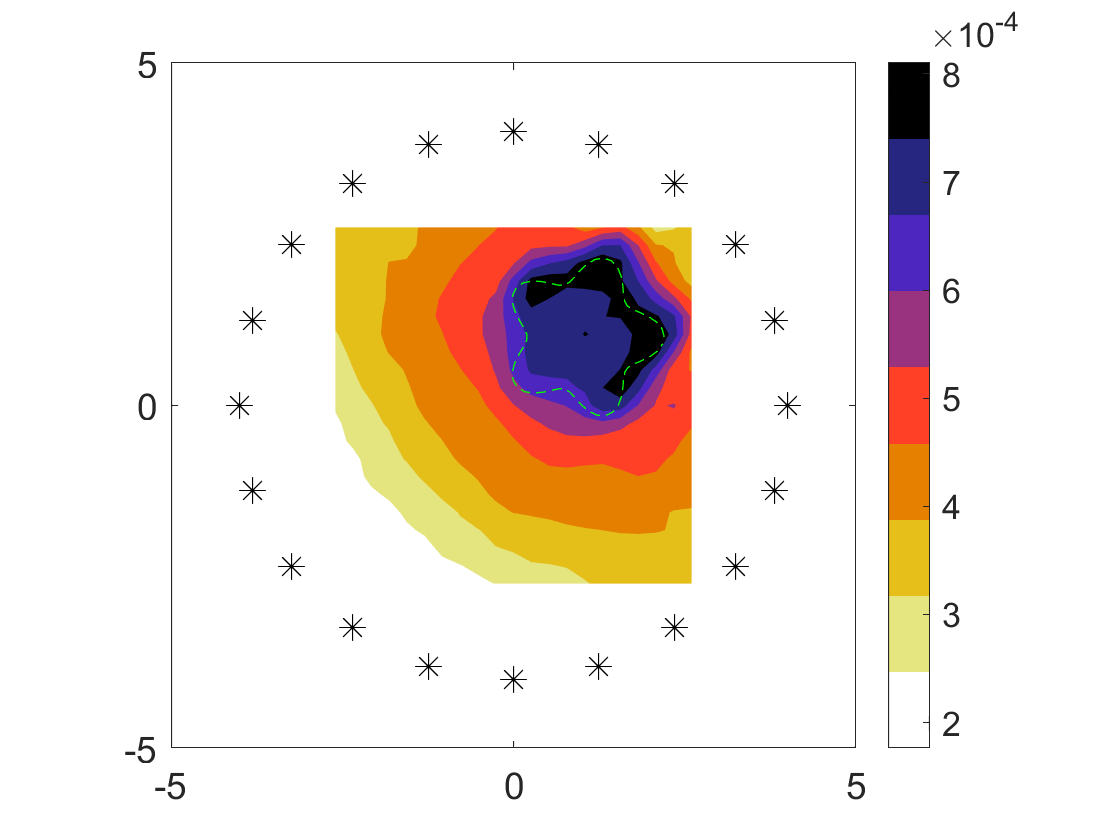} \\
		(d)~$I_1$ & (e)~$I_2$ & (f)~$I_3$
	\end{tabular}
	\caption{Reconstructions of starfish-shaped scatterers using different indicator functions, $\varepsilon=5\%$. The starfish is centered at $(0,0)$ in the first row and centered at $(1,1)$ in the second row.}\label{fig-starfish}
\end{figure}


\begin{figure}
	\centering
	\begin{tabular}{ccc}
		\includegraphics[width=0.33\textwidth]{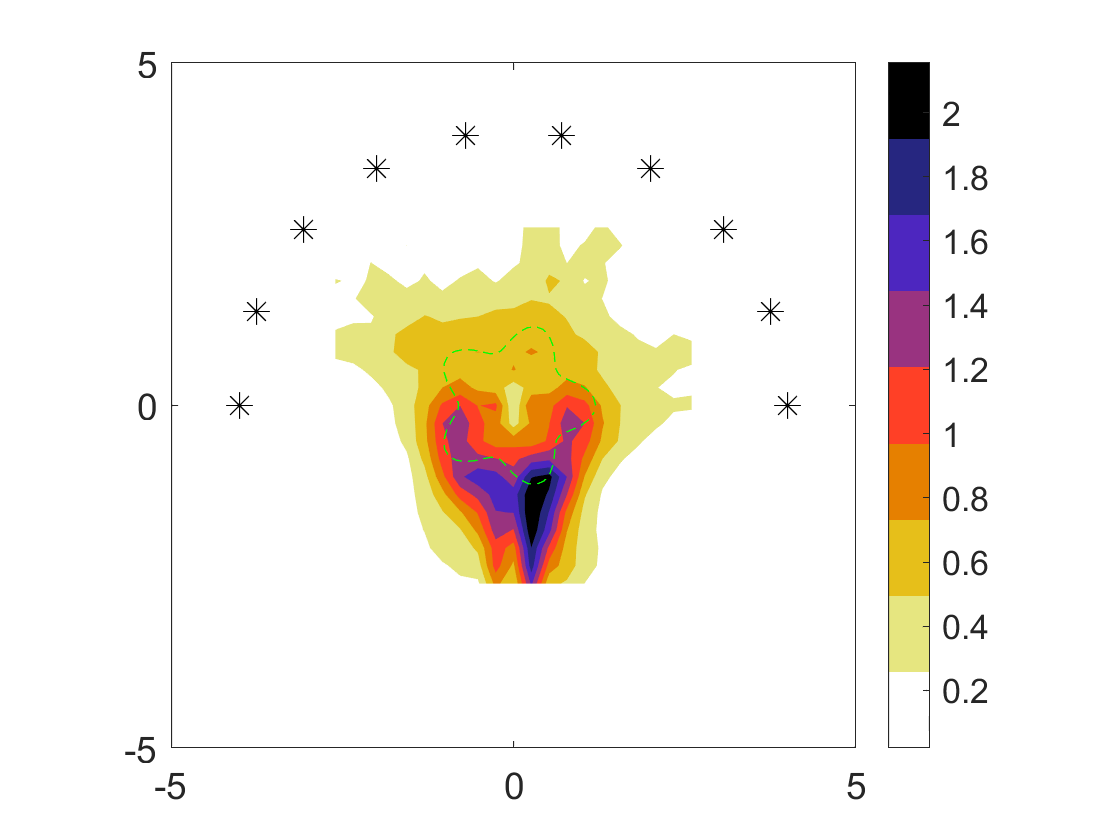}
		& \includegraphics[width=0.33\textwidth]{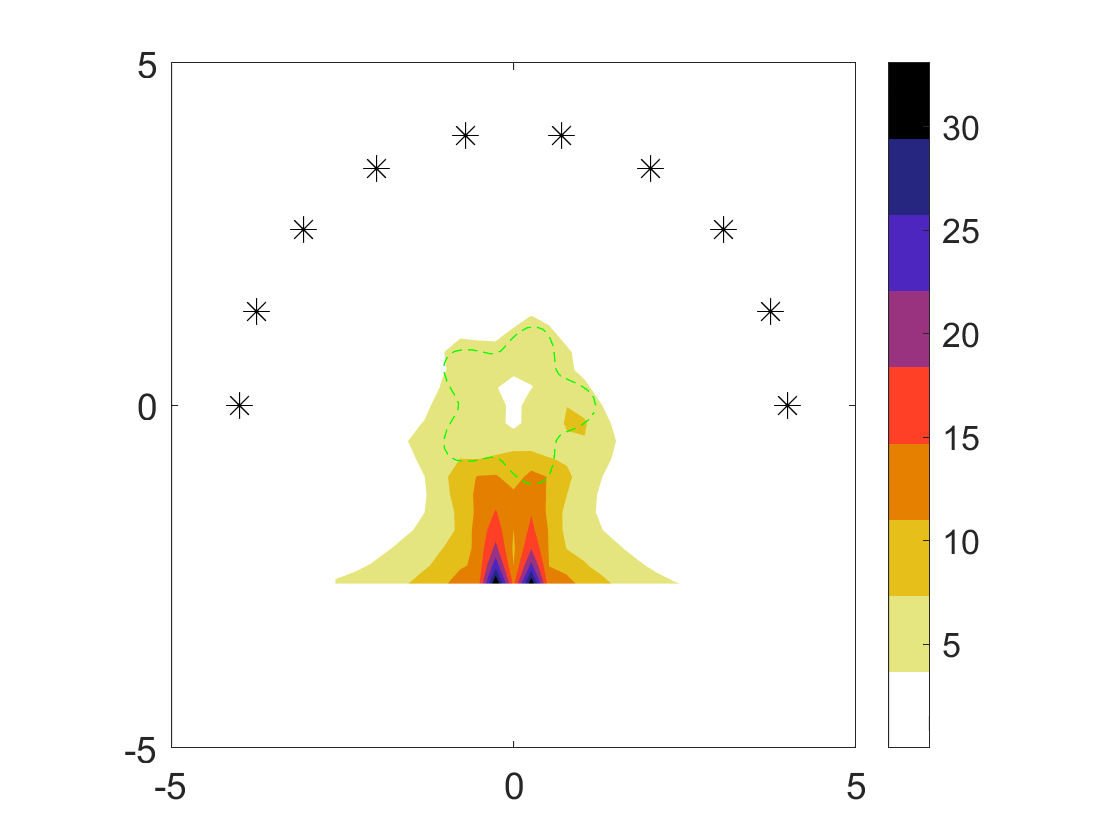}
		& \includegraphics[width=0.33\textwidth]{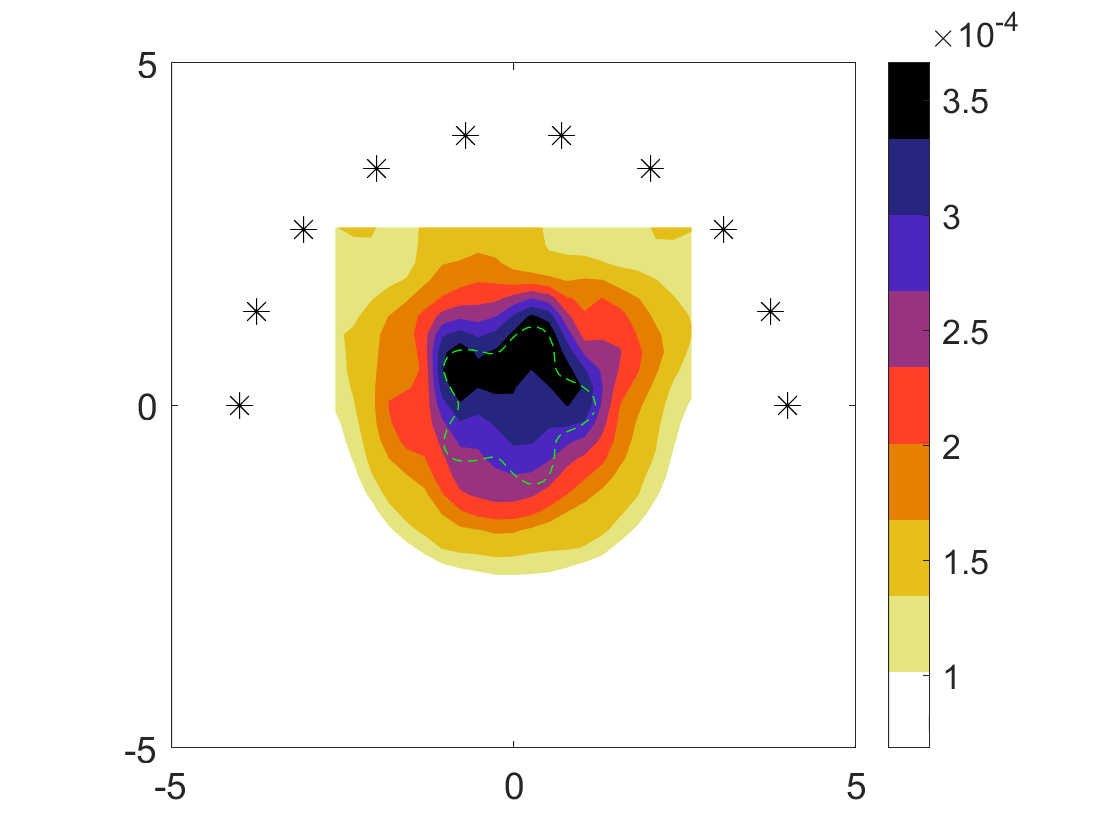} \\
		(a)~$I_1,\theta=\pi$ & (b)~$I_2,\theta=\pi$ & (c)~$I_3,\theta=\pi$\\
		\includegraphics[width=0.33\textwidth]{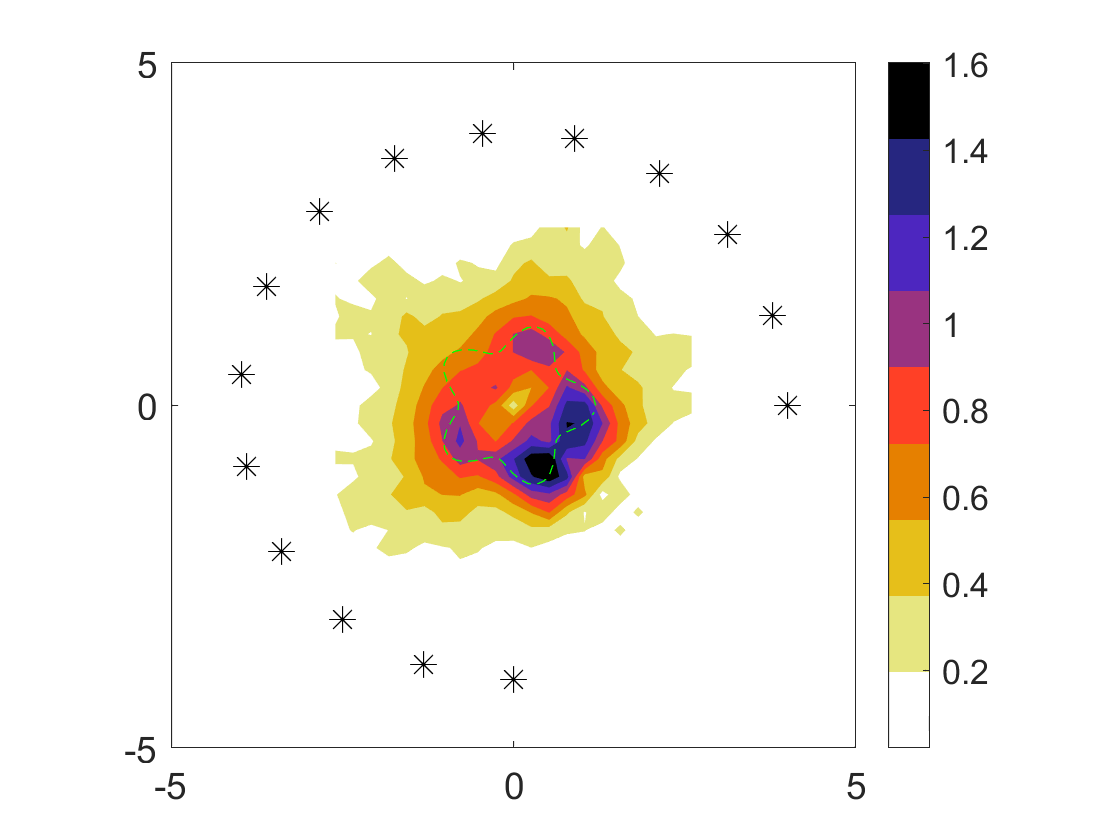}
		& \includegraphics[width=0.33\textwidth]{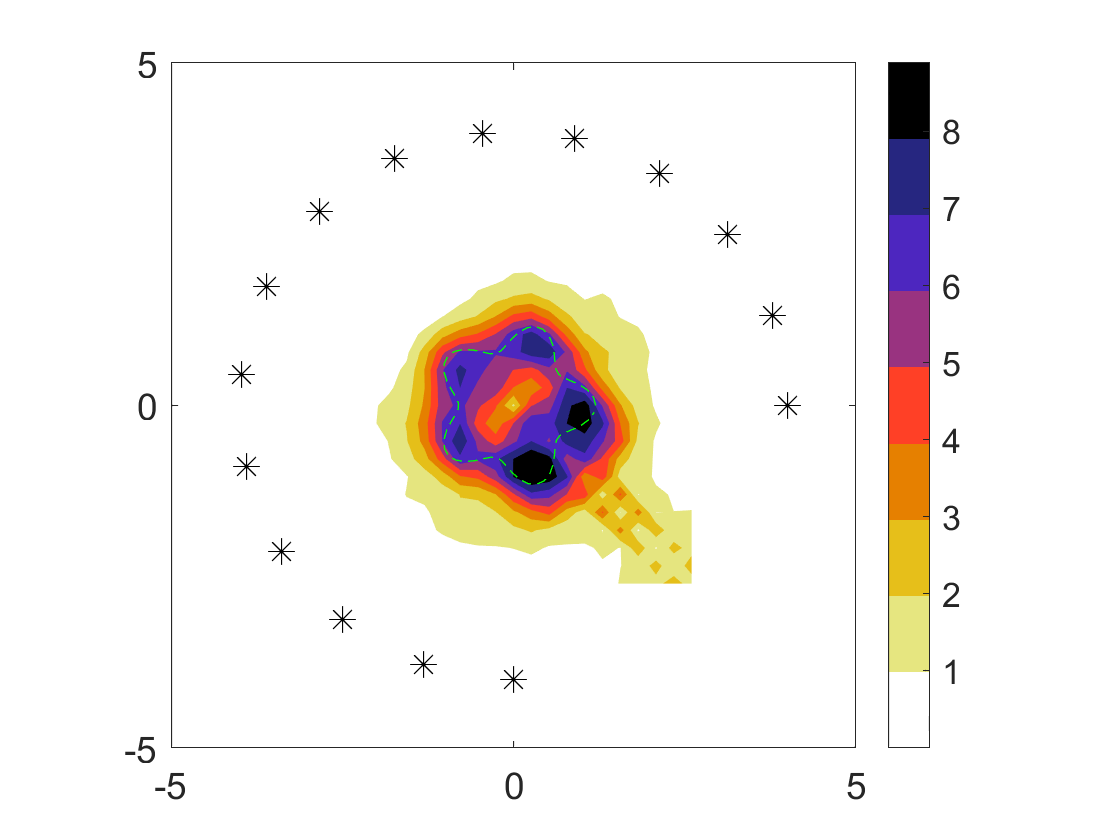}
		& \includegraphics[width=0.33\textwidth]{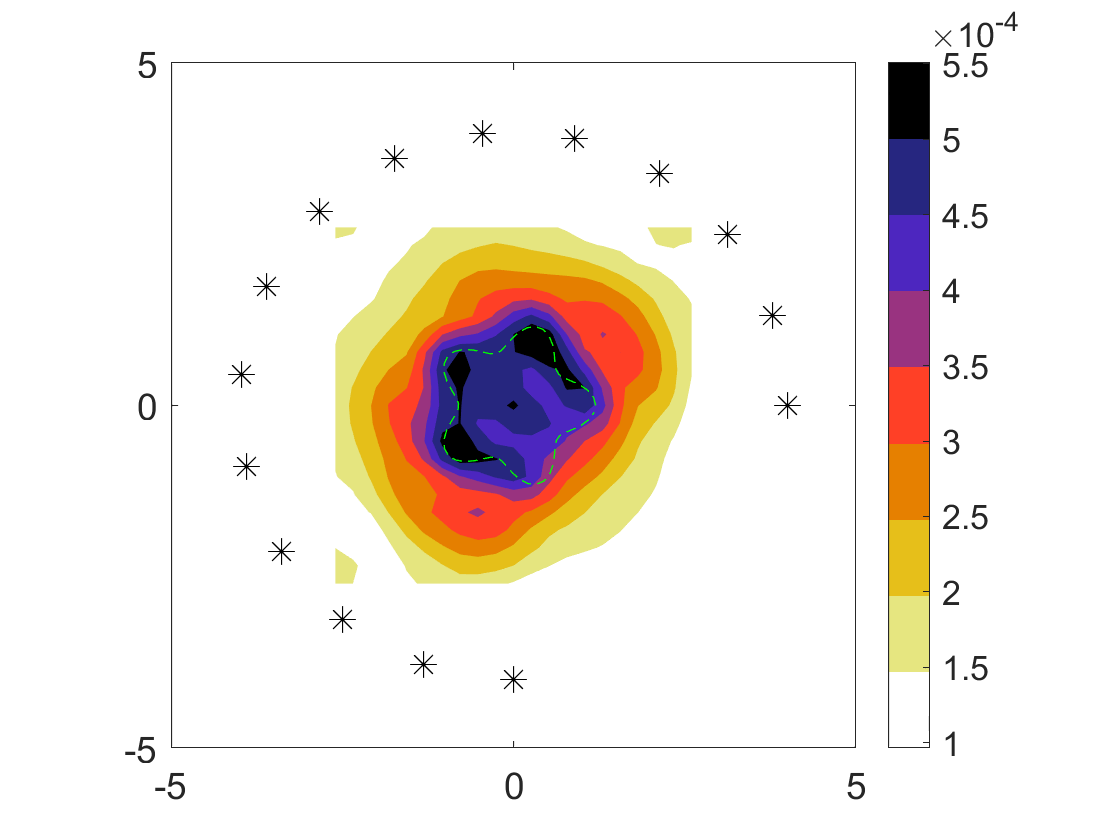}\\
		(d)~$I_1,\theta=\frac{3}{2}\pi$ & (e)~$I_2,\theta=\frac{3}{2}\pi$ & (f)~$I_3,\theta=\frac{3}{2}\pi$
	\end{tabular}
	\caption{Reconstructions of starfish-shaped scatterer centered at $(0,0)$ from limited aperture data using different indicator functions, $\varepsilon=5\%$.}\label{fig-starfish-lim}
\end{figure}
\end{example}

\begin{example}\textbf{Simultaneous reconstructions of a normal size scatterer and a point-like scatterer}\label{example4}

In this example, the simultaneous reconstructions of a normal size scatterer and a point-like scatterer are considered. The boundaries of the point-like, acorn-shaped and rounded-square-shaped scatterers centered at $(a,b)$ are parameterized as
\begin{small}
	\begin{align}
		Point: s(\theta) =&  (a,b)+0.1(\cos\theta,\sin\theta),\quad \theta\in[0,2\pi). \label{pointt}  \\
		Acorn: s(\theta) =&  (a,b)+0.84\left(\frac{17}{4}+2\cos3\theta\right)^{1/2} (\cos\theta,\sin\theta),\quad \theta\in[0,2\pi). \label{acorn} \\
		Rounded-square: s(\theta) =&  (a,b)+\frac{\sqrt{2}}{2}(\cos^3\theta+\sin^3\theta+ \cos\theta+\sin\theta, \nonumber \\
		& -\cos^3\theta+\sin^3\theta-\cos\theta+\sin\theta),\quad \theta\in[0,2\pi).\label{square}
	\end{align}
\end{small}

The point-like scatterer is chosen to be centered at $(2.2,2.2)$, which is marked with green asterisk. The acorn-shaped and rounded-square-shaped scatterers are centered at $(0,0)$, the exact boundaries are marked with green dash lines. The reconstructions are shown in Figure \ref{fig-normal+point}. The result shows that Algorithm 3 can roughly reconstruct the shapes of acorn-shaped or rounded-square-shaped scatterers together with the location of the point-like scatterer.

\begin{figure}
	\centering
	\begin{tabular}{ccc}
		\includegraphics[width=0.33\textwidth]{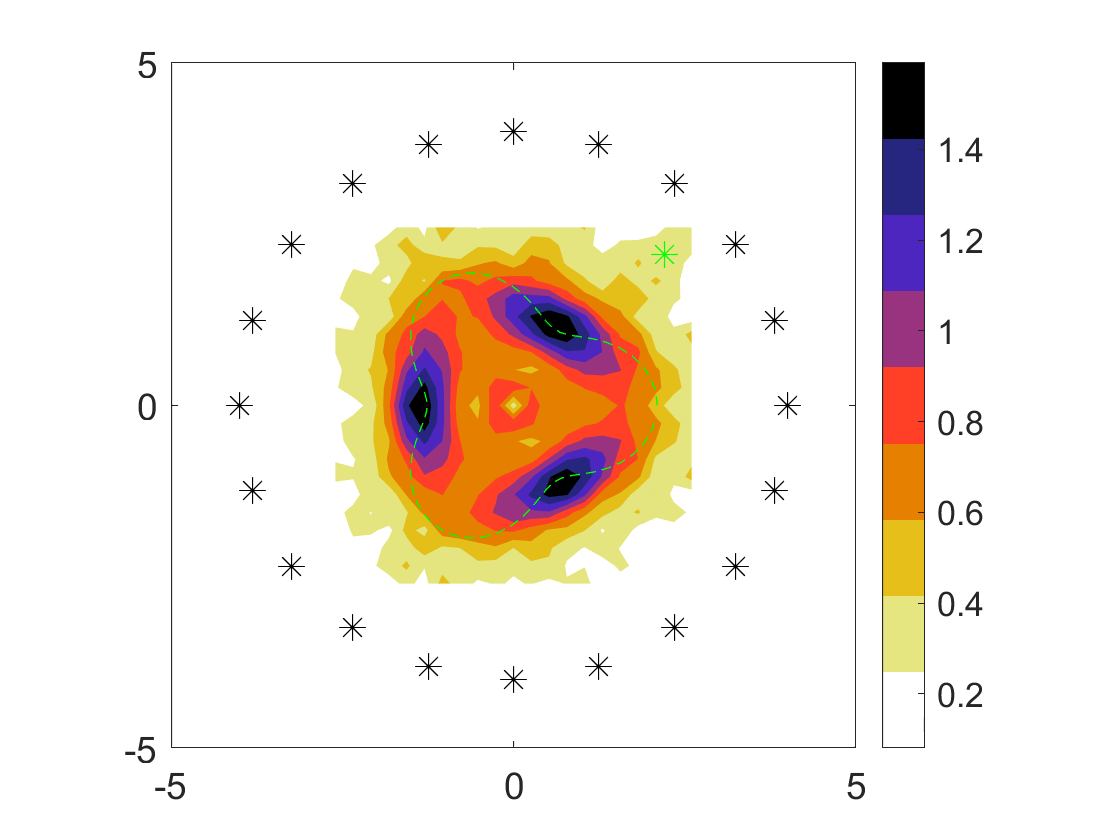}
		& \includegraphics[width=0.33\textwidth]{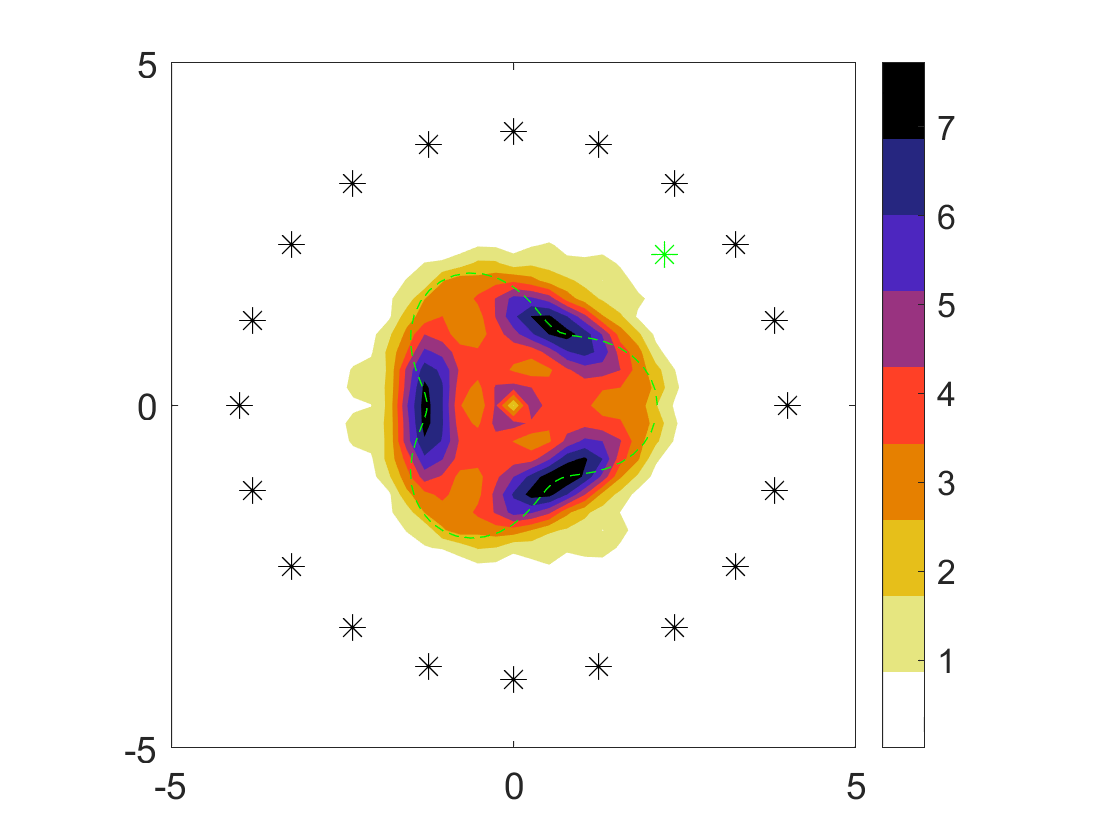}
		& \includegraphics[width=0.33\textwidth]{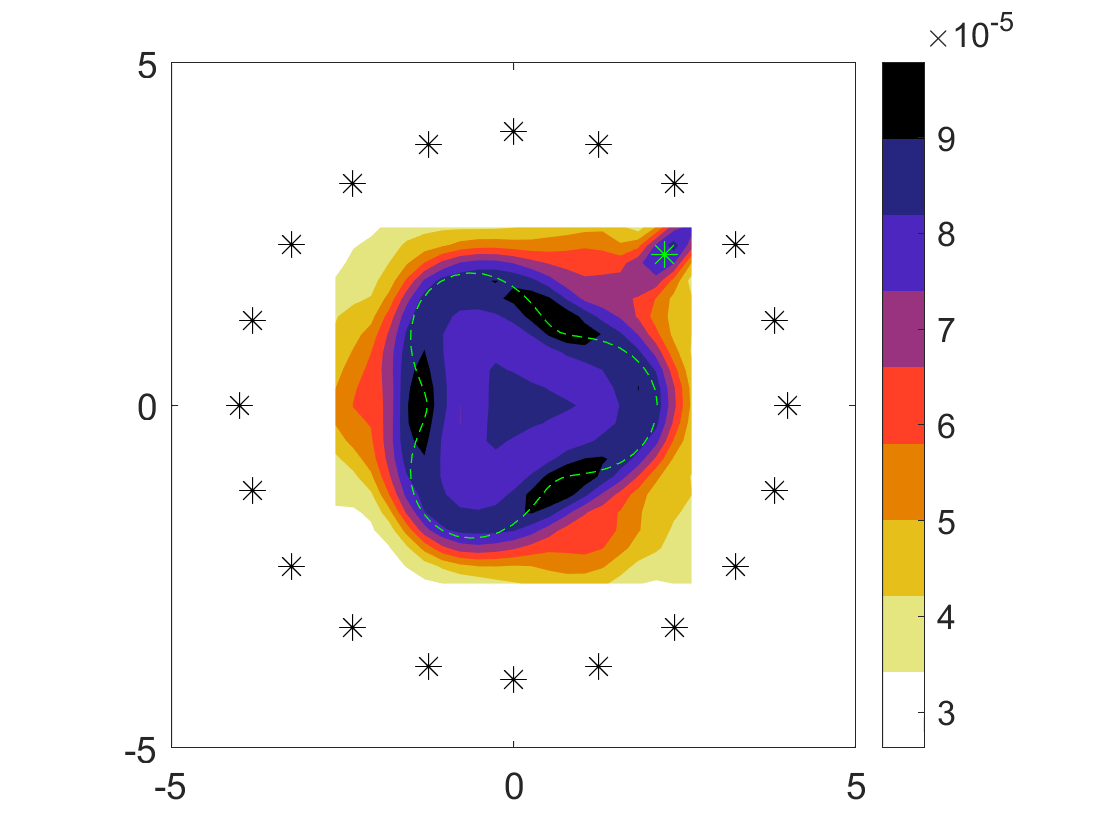} \\
		(a)~$I_1$ & (b)~$I_2$ & (c)~$I_3$\\
		\includegraphics[width=0.33\textwidth]{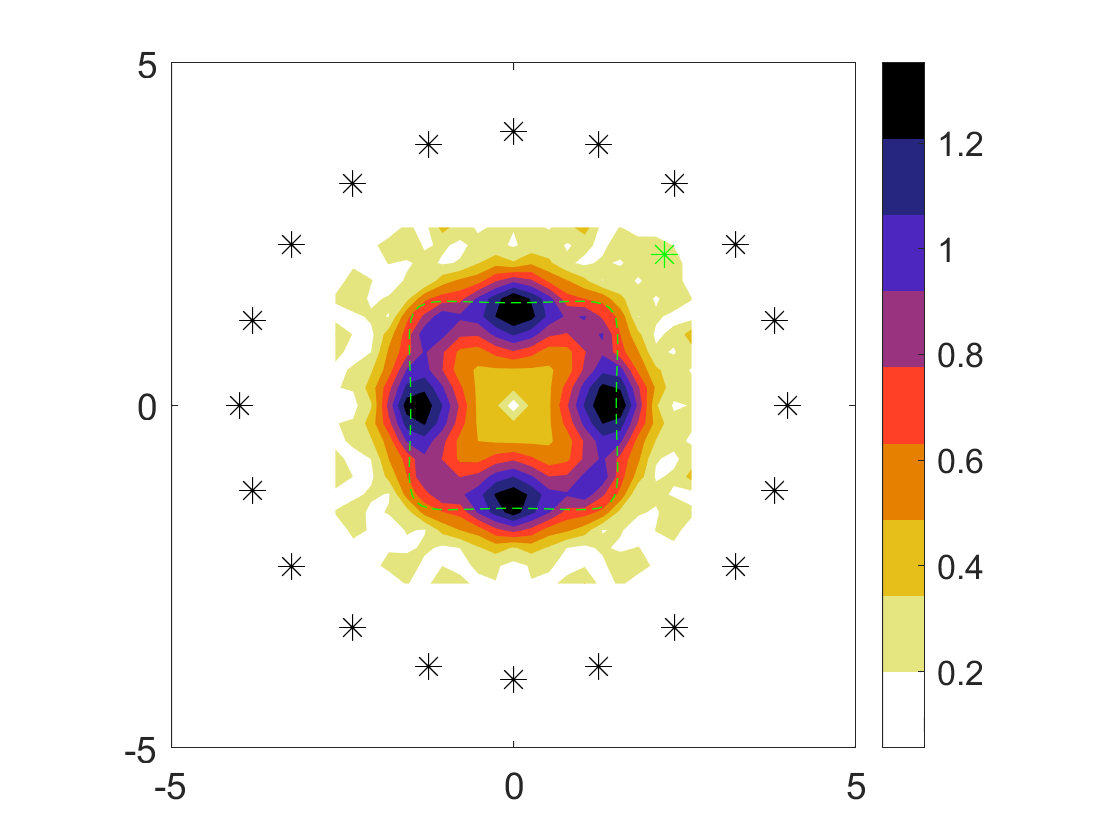}
		& \includegraphics[width=0.33\textwidth]{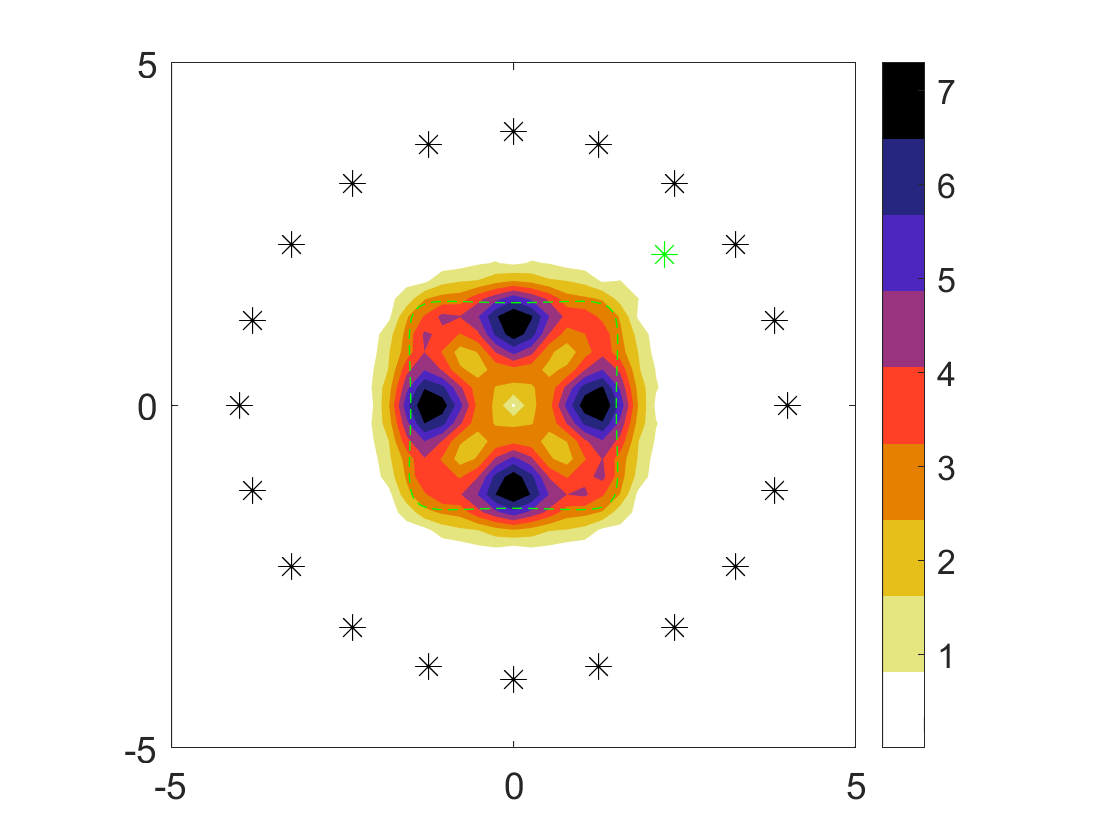}
		& \includegraphics[width=0.33\textwidth]{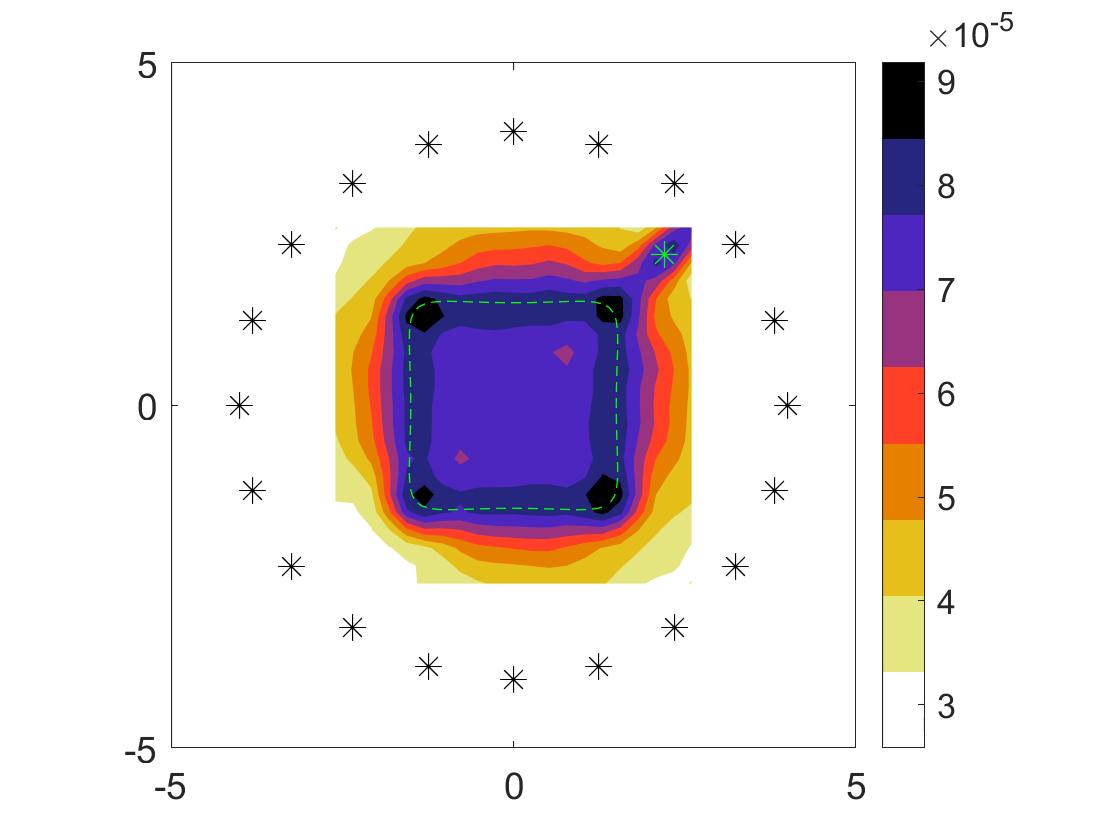}\\
		(d)~$I_1$ & (e)~$I_2$ & (f)~$I_3$
	\end{tabular}
	\caption{Reconstructions of a normal size scatterer centered at $(0,0)$ and a point-like scatterer centered at $(2.2,2.2)$ using different indicator functions, $\varepsilon=5\%$. (a-c)~Reconstructions of the acorn-shaped scatterer and the point-like scatterer. (d-f)~Reconstructions of the rounded-square-shaped scatterer and the point-like scatterer.}\label{fig-normal+point}
\end{figure}
\end{example}

\begin{example}\textbf{Reconstructions of two disconnected normal size scatterers}\label{example5}

In this example, we consider the reconstructions of two disconnected normal size scatterers using Algorithm 3. The boundaries of the circle-shaped, kite-shaped, acorn-shaped and starfish-shaped scatterers are parameterized similar as (\ref{circle}), (\ref{kite}), (\ref{acorn}) and (\ref{starfish}) except that the sizes of the scatterers are chosen to be respectively four-ninths, half, half and two-thirds of the original sizes. The boundary of the peanut-shaped scatterer centered at $(a,b)$ is parameterized as
\begin{equation}
	\label{peanut} Peanut:s(\theta)=(a,b)+\frac{5}{12}(4\cos^{2}\theta+ \sin^{2}\theta)^{1/2}(\cos\theta,\sin\theta), \quad\theta\in[0,2\pi).
\end{equation}

Choose the noise level $\varepsilon=5\%$, the terminal time $T=25$ and $N_t=256$. The reconstructions are shown in Figure \ref{fig-2scatterers}. The exact boundaries of the scatterers are marked with green dash lines, the measurement points are marked with black asterisks.

\begin{figure}
	\centering
	\begin{tabular}{ccc}
		\includegraphics[width=0.33\textwidth]{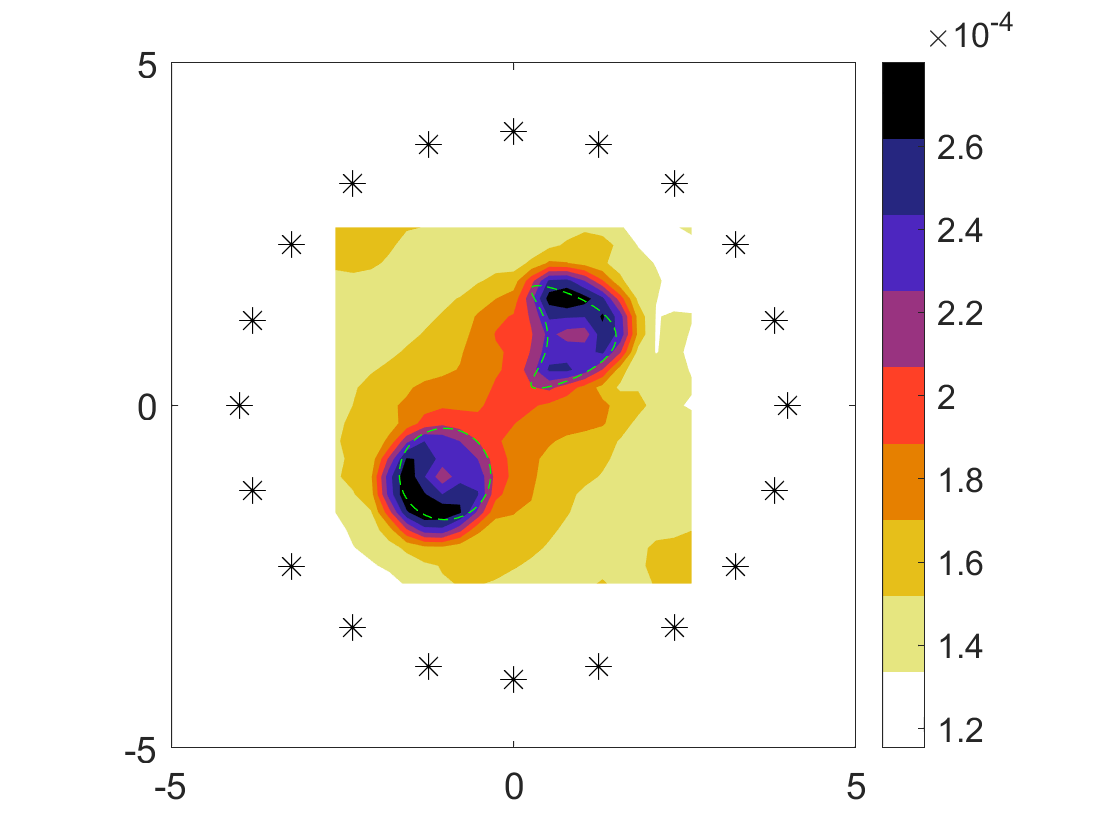}
		& \includegraphics[width=0.33\textwidth]{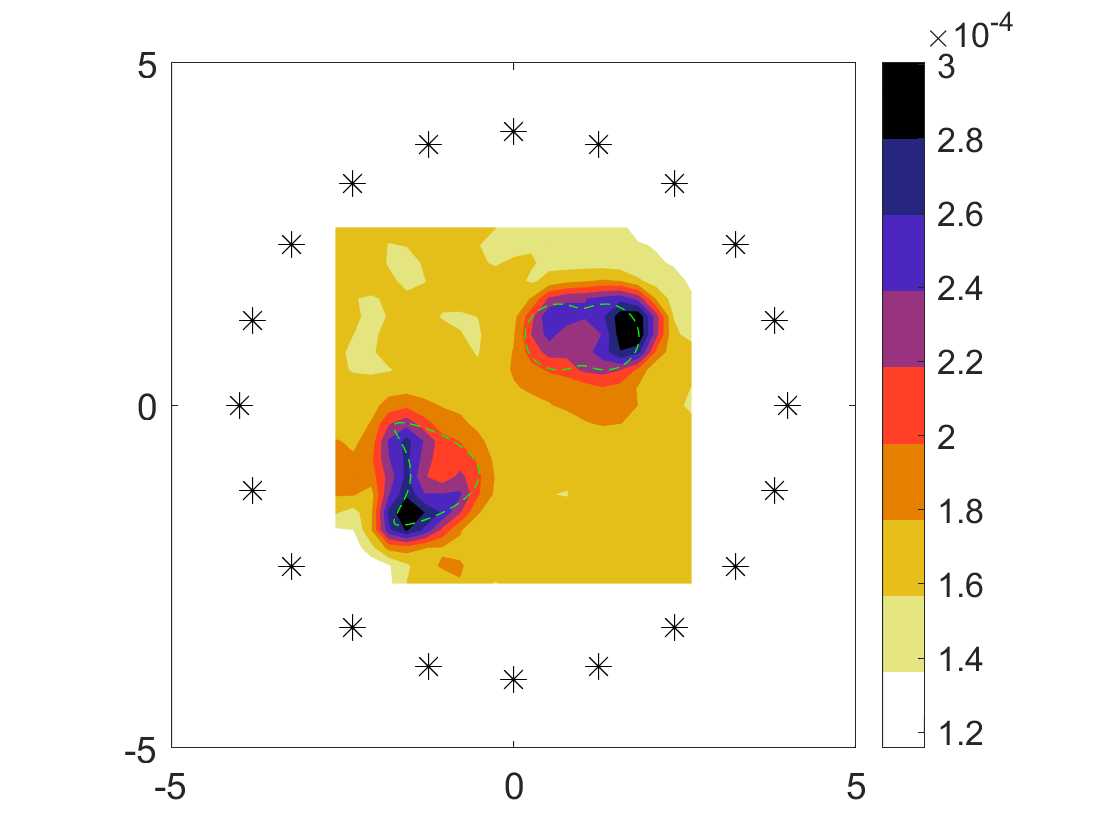}
		& \includegraphics[width=0.33\textwidth]{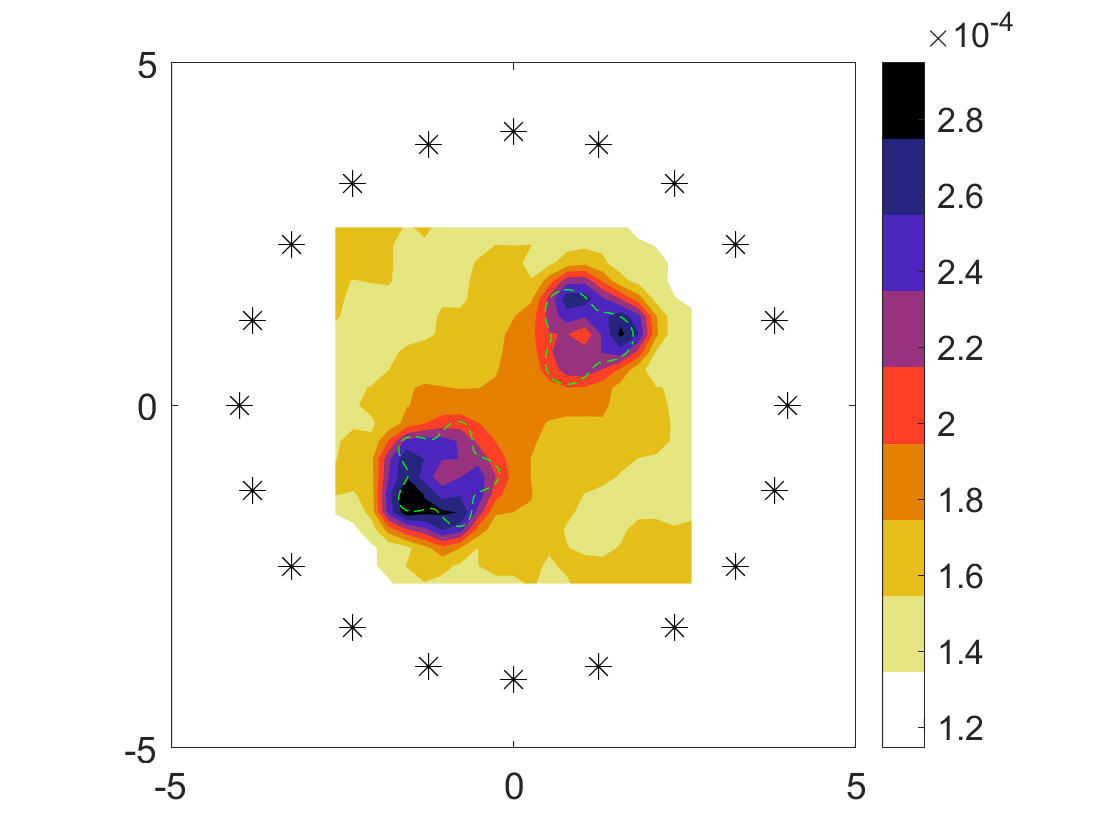}\\
		(a)~circle and kite & (b)~kite and peanut & (c)~acorn and starfish
	\end{tabular}
	\caption{Reconstructions of two disconnected normal size scatterers using the indicator function $I_3$, $\varepsilon=5\%$. (a)~The circle-shaped scatterer centered at $(-1,-1)$ and the kite-shaped scatterer centered at $(1,1)$. (b)~The kite-shaped scatterer centered at $(-1,-1)$ and peanut-shaped scatterer centered at $(1,1)$. (c)~The acorn-shaped scatterer centered at $(-1,-1)$ and the starfish-shaped scatterer centered at $(1,1)$.}\label{fig-2scatterers}
\end{figure}

In the following two examples, we consider the effectiveness of Algorithm 3 to reconstruct point-like scatterers and cubes in $\mathbb{R}^3$. Note that the computation cost of the three-dimensional time domain forward scattering problem is expensive and the k-Wave software is utilized for the calculation \cite{kWave}.
In Examples \ref{example6}-\ref{example7}, the measurement surface $\Gamma_m$ is chosen to be a spherical surface centered at the origin with radius 4, the incident surface is chosen as $\Gamma_i=\Gamma_m$. Denote by $N$ the number of the measurement points and incident points whose locations are generated by the function called "makeCartSphere" utilizing the k-Wave software. The Cartesian coordinates of the measurement points for $i=0,1,\cdots,N-1$ are
	$$x_i=4\left( \sqrt{1-\alpha_i^2} \cos(3-\sqrt{5})i\pi , \alpha_i , \sqrt{1-\alpha_i^2} \sin(3-\sqrt{5})i\pi  \right),~~\alpha_i=\dfrac{2i-N+1}{N}.$$
The sampling points are chosen as $21\times21\times21$ equidistant points in the sampling region $\Omega=[-2,2]\times[-2,2]\times[-2,2]$. The geometric diagram of the measurement points and sampling region is shown in Figure \ref{fig-50sensors}. The noise level is chosen as $\varepsilon=5\%$.
\begin{figure}
	\centering
	\includegraphics[width=0.5\textwidth]{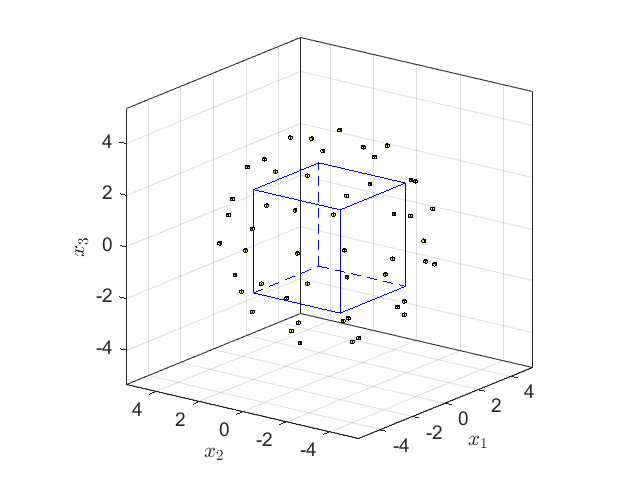}
	\caption{The geometric diagram of the measurement points and sampling region.}\label{fig-50sensors}
\end{figure}
\end{example}
	
\begin{example}\textbf{Reconstructions of point-like scatterers in $\mathbb{R}^3$}\label{example6}

In this example, the reconstructions of point-like scatterers are considered. We choose $N=50$, $T=19$ and $N_t=256$ in this example.
The exact locations of the point-like scatterers are marked with green asterisks.
In fact, the point-like scatterers is cube-shaped due to the characteristics of the k-Wave software when calculating the wave field.
For the reconstruction of a single point-like scatterer, the locations of the scatterers are chosen as $$D_1=[-0.05,0.05]\times[-0.05,0.05]\times[-0.05,0.05]$$
and
$$D_2=[0.35,0.45]\times[-0.85,-0.75]\times[0.15,0.25].$$
The location of a single point-like scatterer can be roughly identified even if the scatterer is not centered at the origin, the results can be seen in Figure \ref{point-3d}.

\begin{figure}
	\centering
	\begin{tabular}{ccc}
		\includegraphics[width=0.33\textwidth]{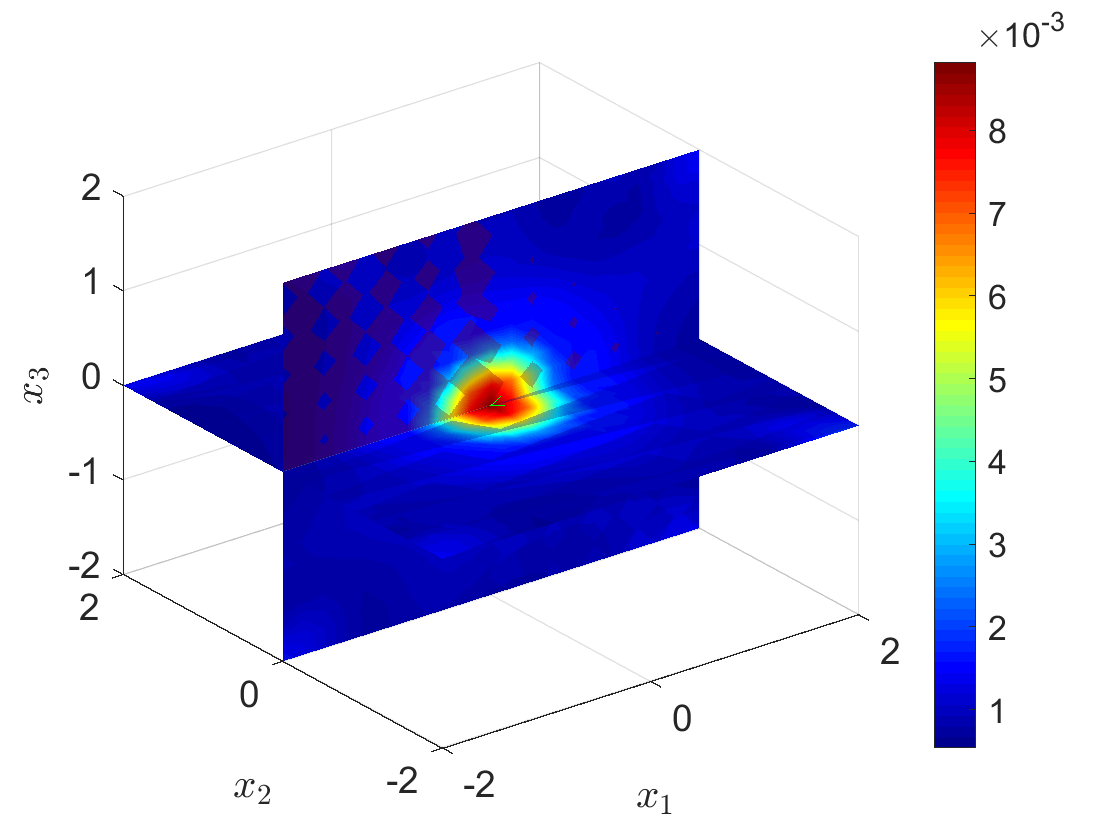}
		& \includegraphics[width=0.33\textwidth]{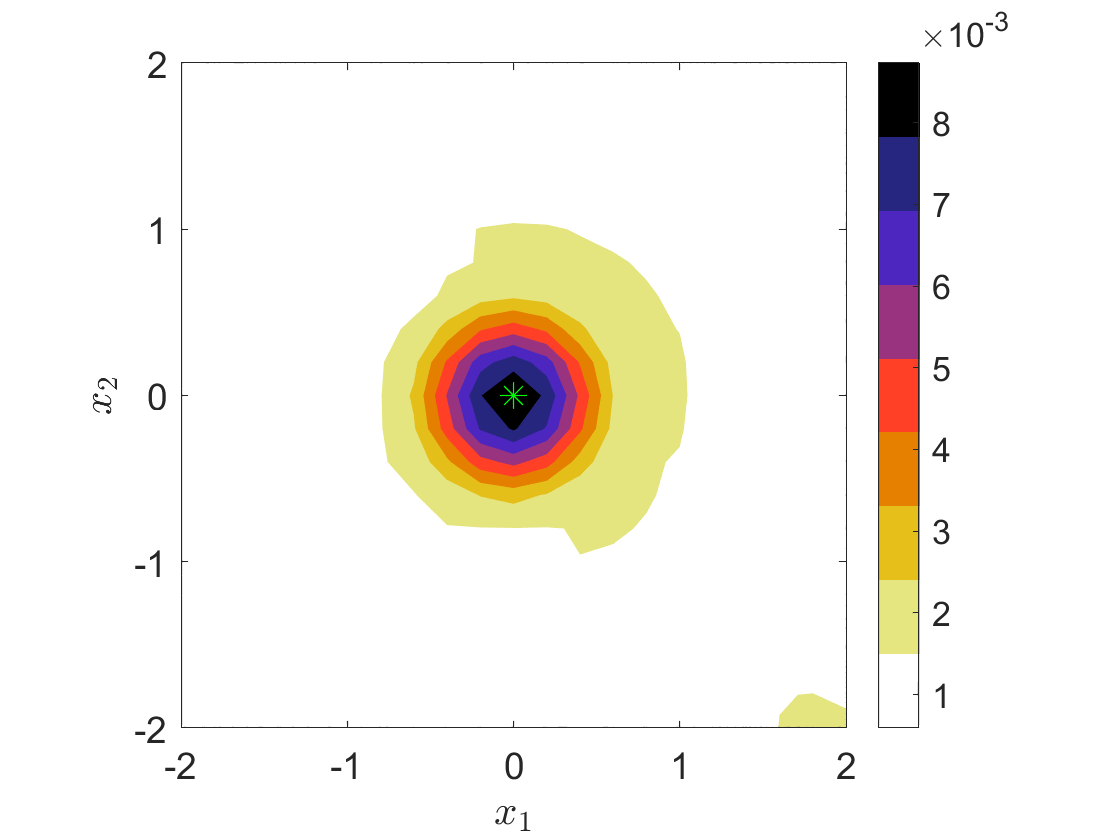}
		& \includegraphics[width=0.33\textwidth]{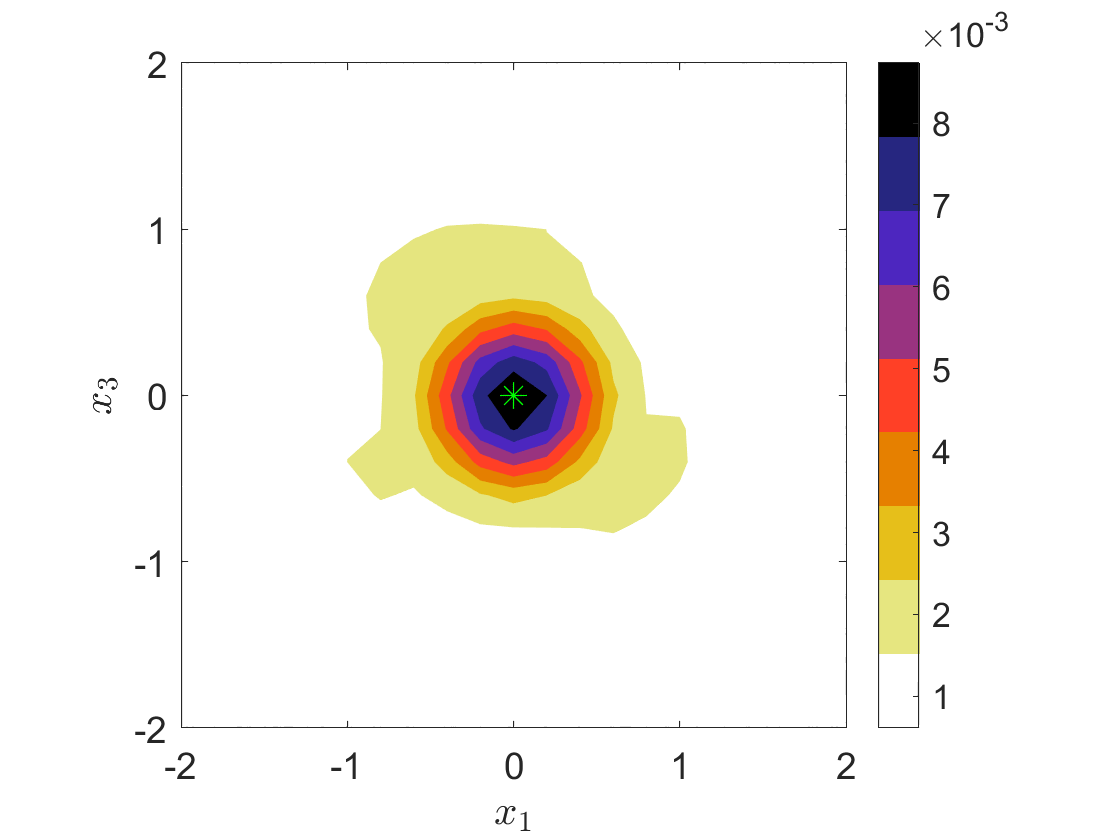} \\
		(a) & (b)  & (c)\\
		\includegraphics[width=0.33\textwidth]{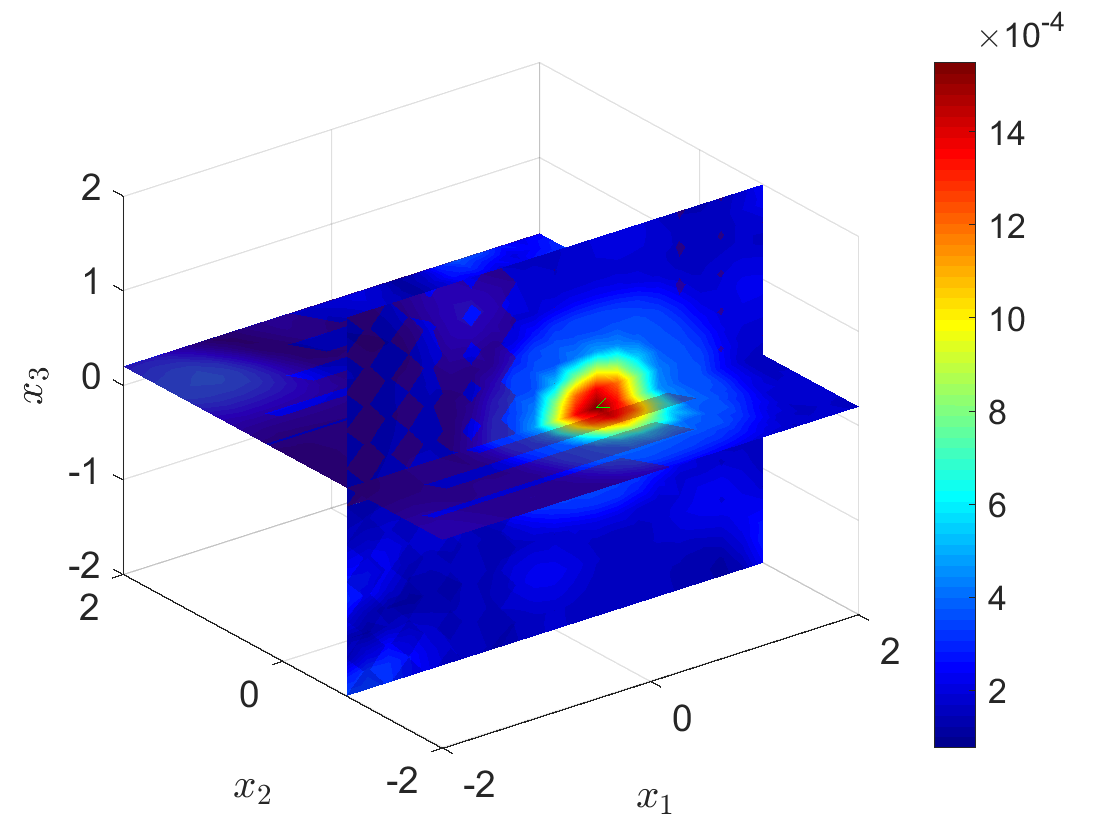}
		& \includegraphics[width=0.33\textwidth]{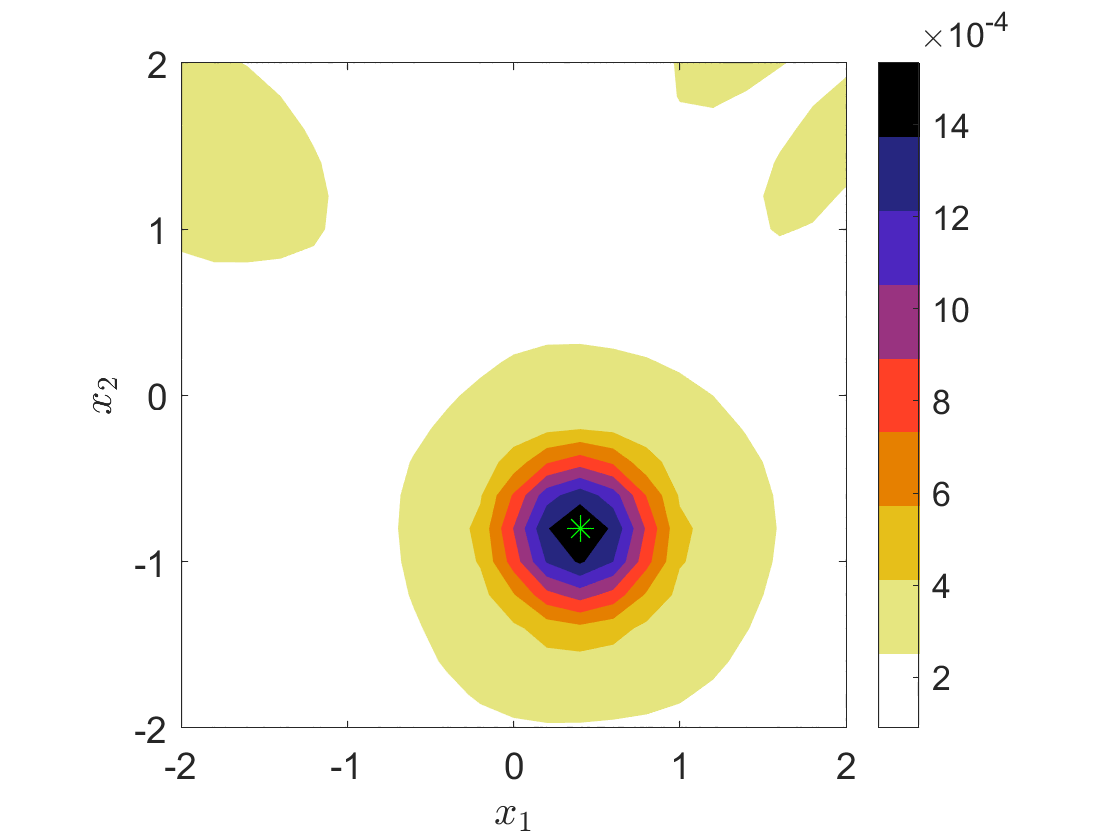}
		& \includegraphics[width=0.33\textwidth]{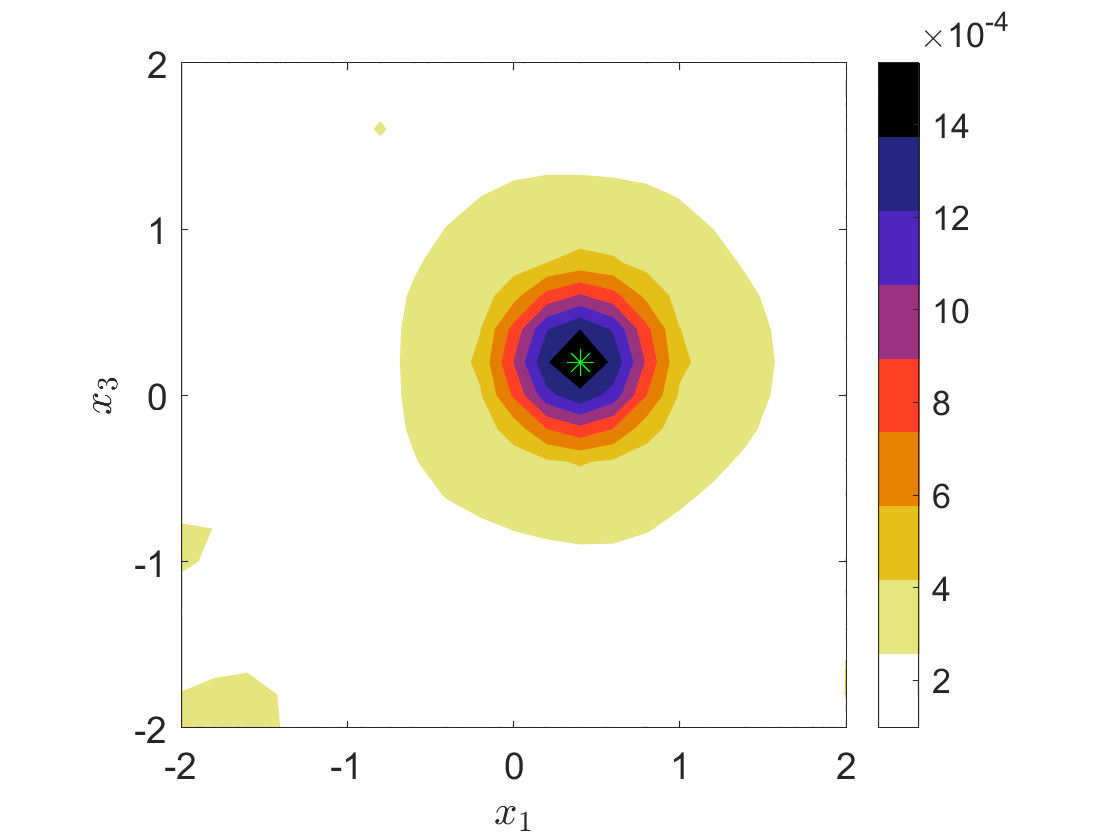} \\
		(d) & (e)  & (f)
	\end{tabular}
	\caption{Reconstructions of a single point-like scatterer using the indicator function $I_3$, $\varepsilon=5\%$. (a,d)~The reconstructions of the scatterers $D_1$ and $D_2$, respectively. (b,e)~The slices through the center of the scatterers along the $x_{1}$-$x_{2}$ plane. (c,f)~The slices through the center of the scatterers along the $x_{1}$-$x_{3}$ plane.}\label{point-3d}
\end{figure}

For the reconstructions of multiple point-like scatterers in $\mathbb{R}^3$,
the locations of the scatterers are chosen to be
$$D_3=[0.55,0.65]\times[0.75,0.85]\times[0.95,1.05]$$
and
$$D_4=[-1.05,-0.95]\times[-0.85,-0.75]\times[-0.65,-0.55],$$ respectively.
The reconstructions can be seen in Figure \ref{2points-3d}.

\begin{figure}
	\centering
	\begin{tabular}{ccc}
		\includegraphics[width=0.33\textwidth]{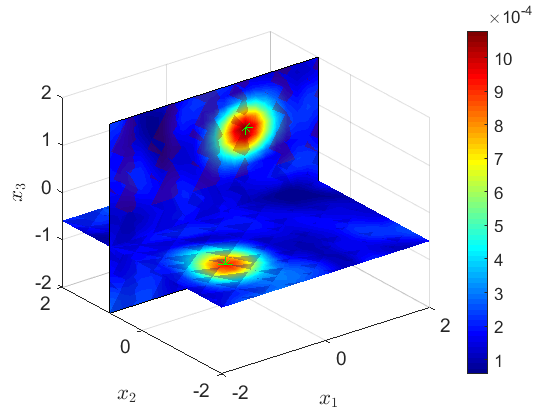}
		& \includegraphics[width=0.33\textwidth]{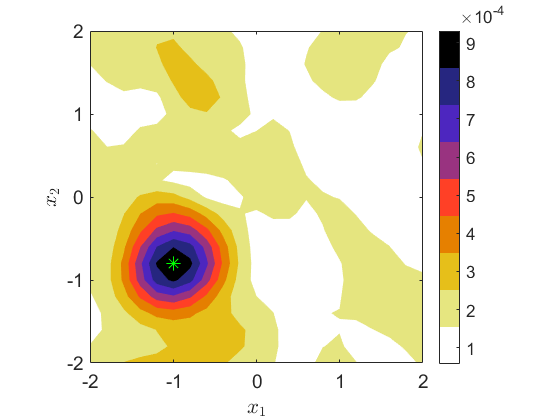}
		& \includegraphics[width=0.33\textwidth]{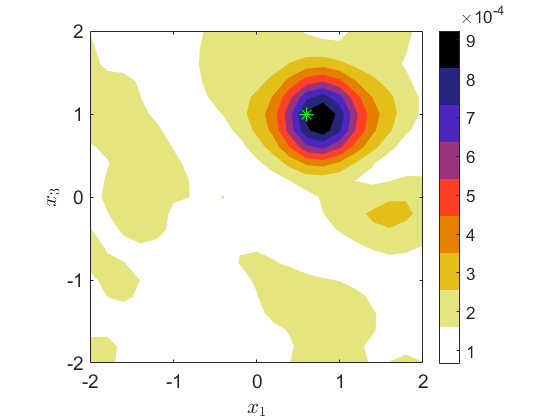}\\
		(a) & (b)& (c)
	\end{tabular}
	\caption{Simultaneous reconstruction of two point-like scatterers $D_3$ and $D_4$ using the indicator function $I_3$, $\varepsilon=5\%$. (a)~The reconstruction of the scatterers. (b)~The slice through the center of the scatterer along the $x_{1}$-$x_{2}$ plane. (c)~The slice through the center of the scatterer along the $x_{1}$-$x_{3}$ plane.}\label{2points-3d}
\end{figure}
\end{example}

\begin{example}\textbf{Reconstructions of cube scatterers}\label{example7}

The reconstructions of cube scatterers are considered in this example. We choose $N=50$, $T=25$ and $N_t=256$. Again, the forward scattering problem is solved utilizing the k-Wave software. The measurement points, the source points, the sampling grids and other parameters are chosen the same as that in Example \ref{example6}. See Figure \ref{cube-3d} and Figure \ref{cubes-3d} for the reconstructions.
In Figure \ref{cube-3d}, the cube is chosen as $$D_5=[-0.4,0.8]\times[-0.4,0.8]\times[-0.4,0.8].$$
In Figure \ref{cubes-3d}, the two disconnected cubes are chosen as $$D_6=[0.4,1]\times[0.4,1]\times[0.4,1]$$
and
$$D_7=[-1,-0.4]\times[-1,-0.4]\times[-1,-0.4],$$ respectively.
The noise level is chosen as $\varepsilon=5\%$. This example shows that Algorithm 3 could give the approximate positions of the cubes, but is no effective to determine the shape of the cubes.

\begin{figure}
	\centering
	\begin{tabular}{ccc}
		\includegraphics[width=0.33\textwidth]{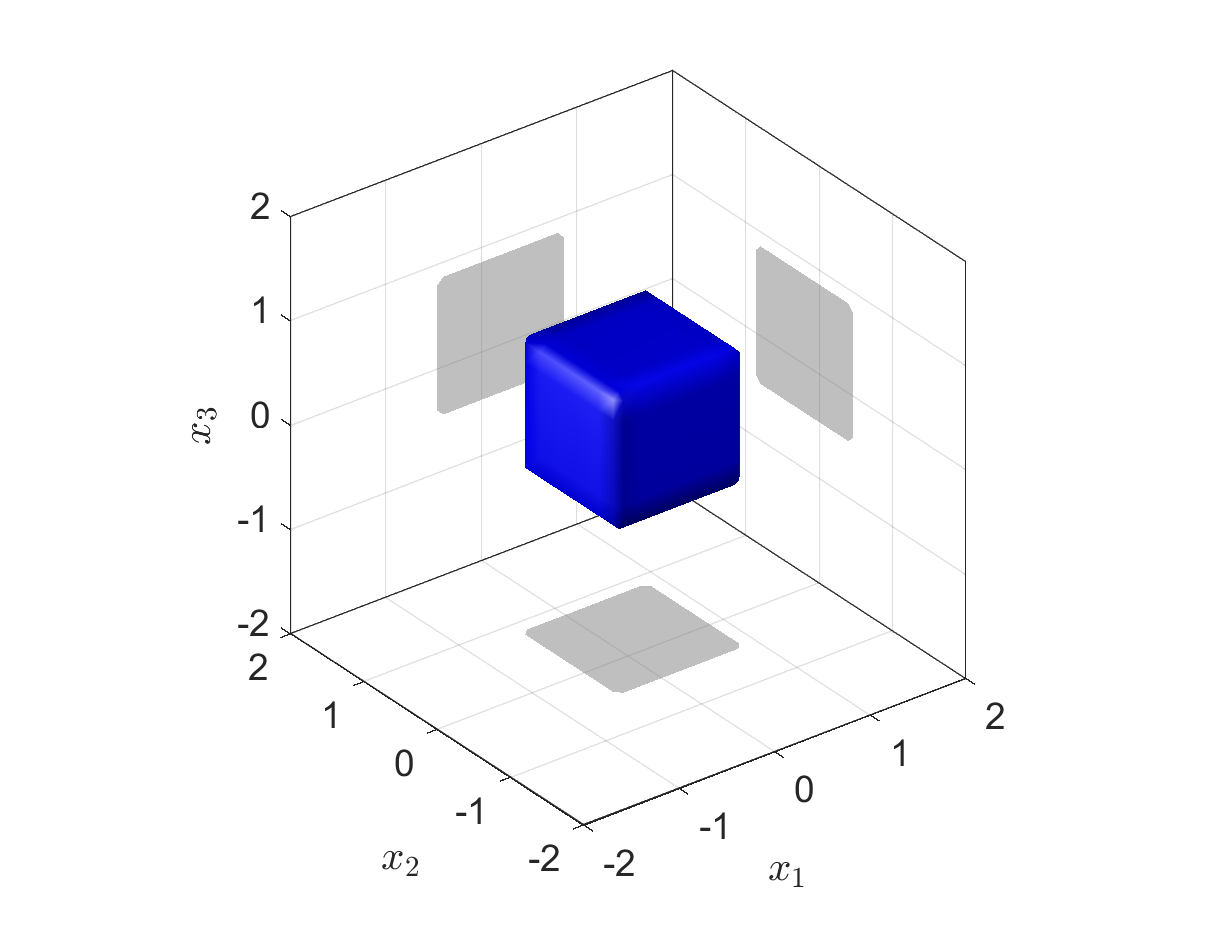}
		& \includegraphics[width=0.33\textwidth]{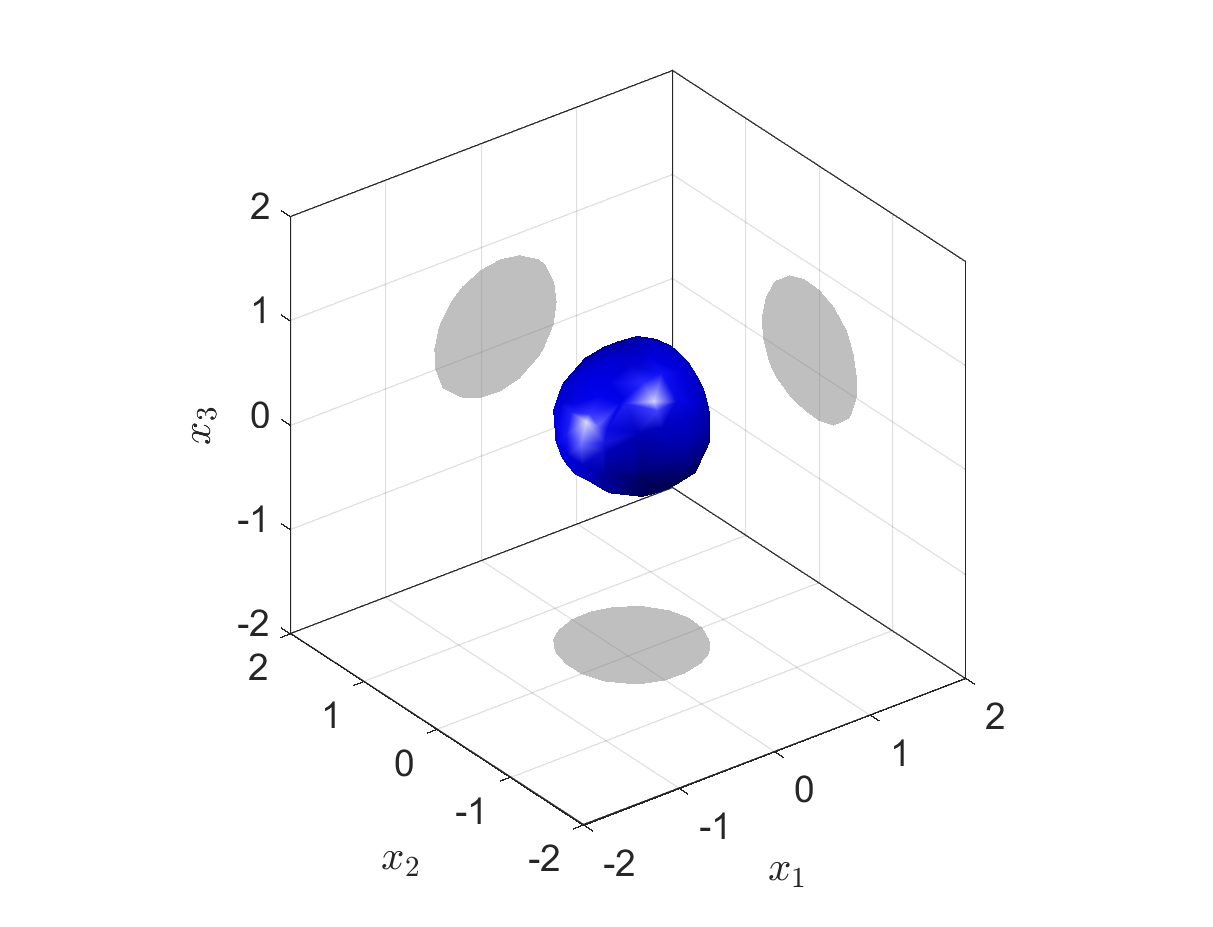}
		& \includegraphics[width=0.33\textwidth]{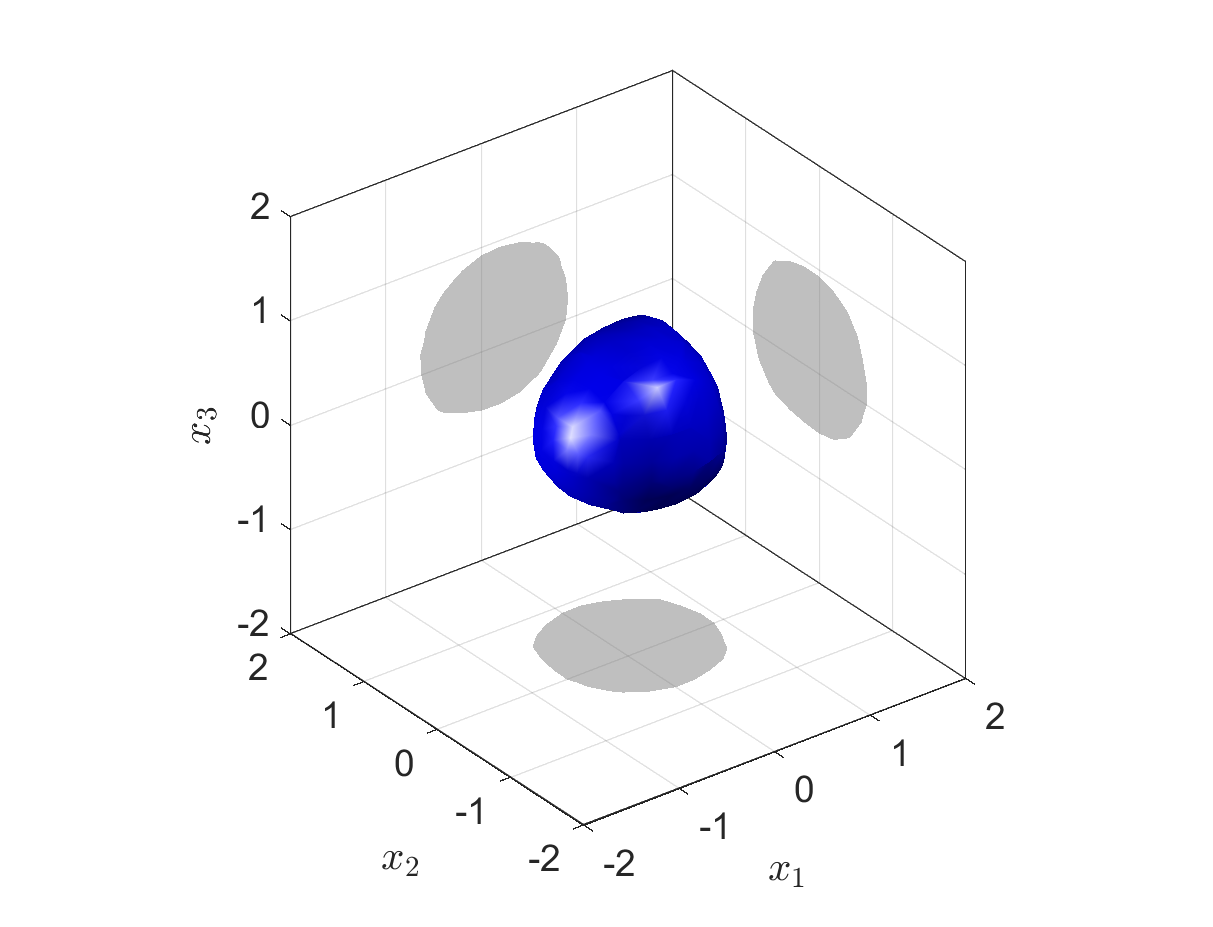} \\
		(a) & (b) & (c)
	\end{tabular}
	\caption{Reconstructions of the cube $D_5$ using the indicator function $I_3$, $\varepsilon=5\%$. (a)~The geometry setting. (b)~The reconstruction of the cube with iso-surface level $0.02$. (c)~The reconstruction of the cube with iso-surface level $0.015$.}\label{cube-3d}
\end{figure}

\begin{figure}
	\centering
	\begin{tabular}{ccc}
		\includegraphics[width=0.33\textwidth]{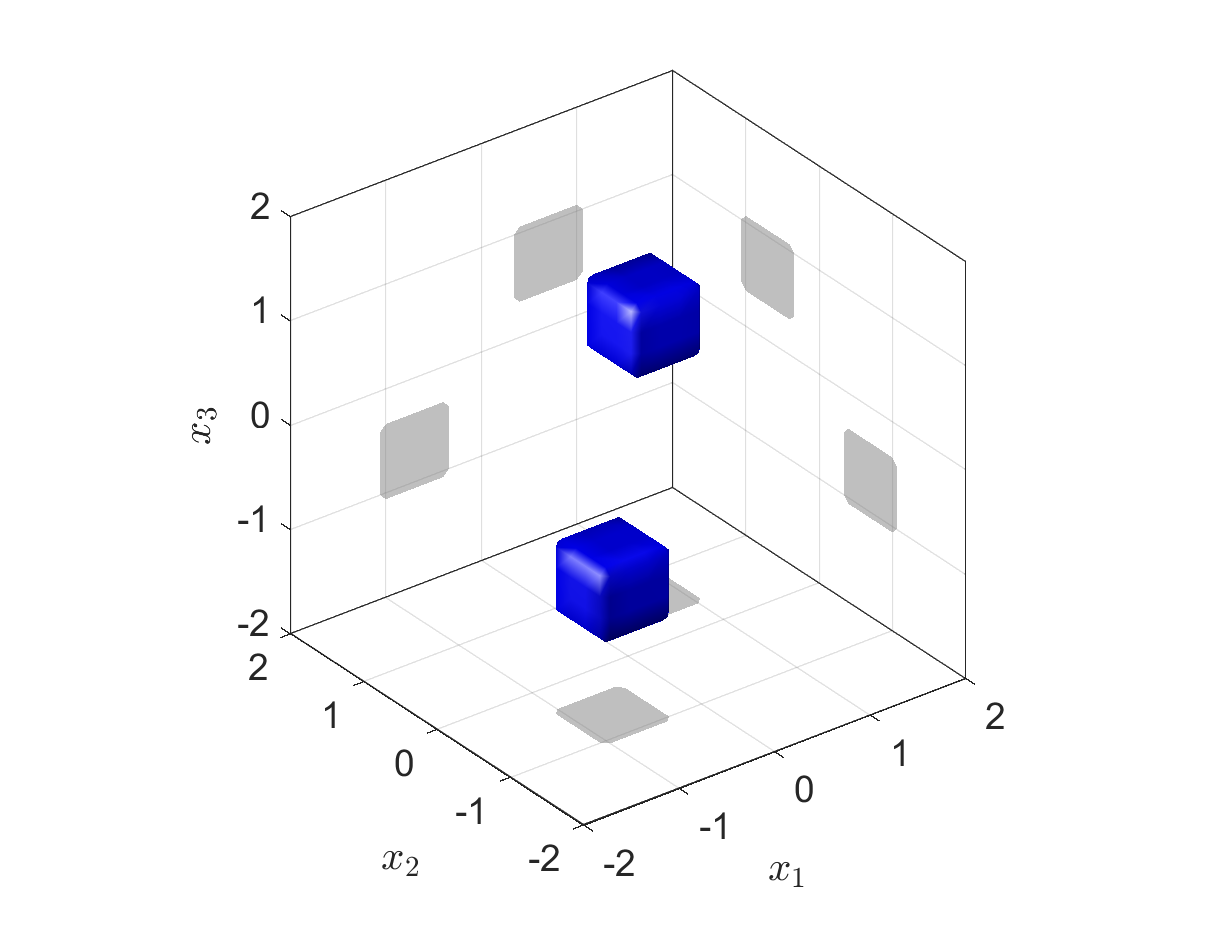}
		& \includegraphics[width=0.33\textwidth]{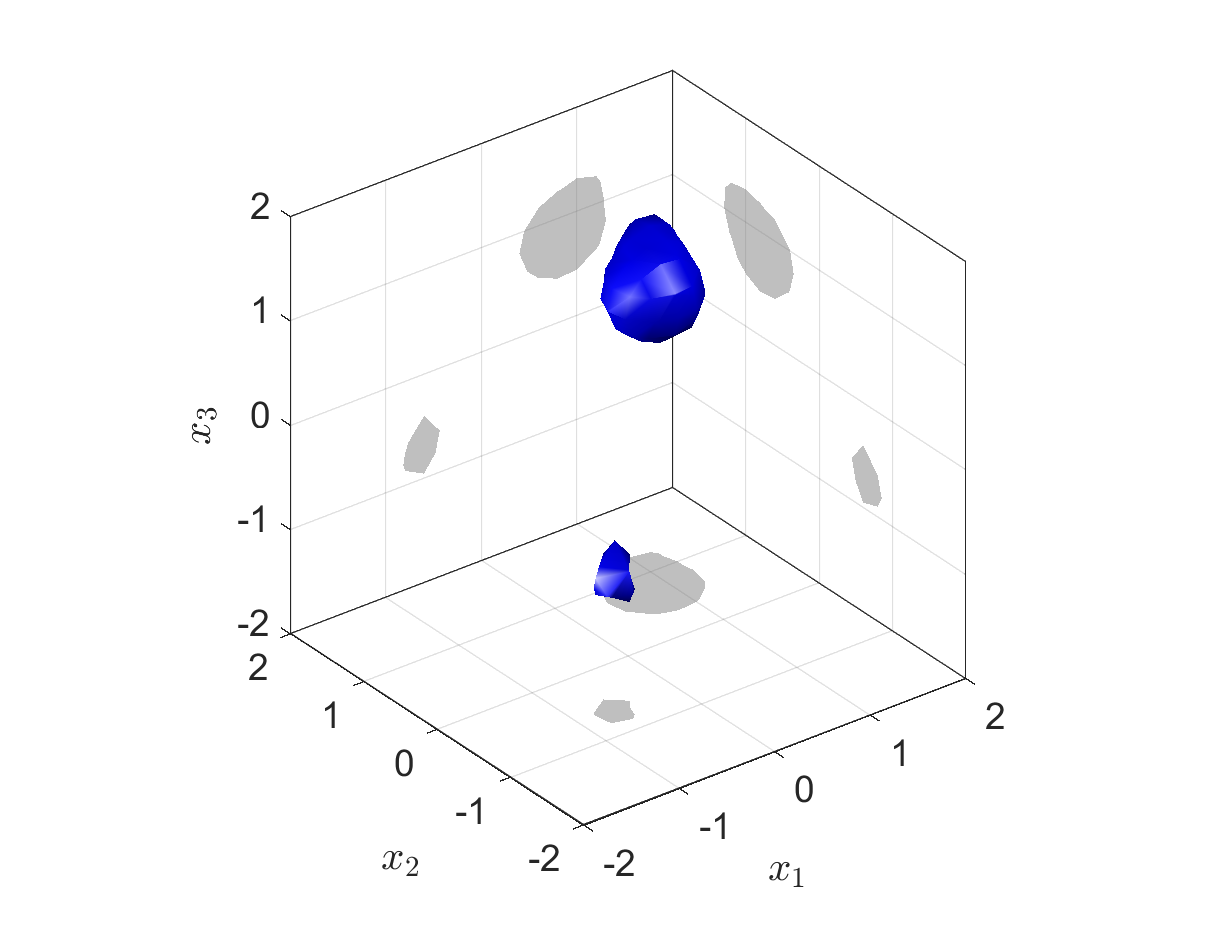}
		& \includegraphics[width=0.33\textwidth]{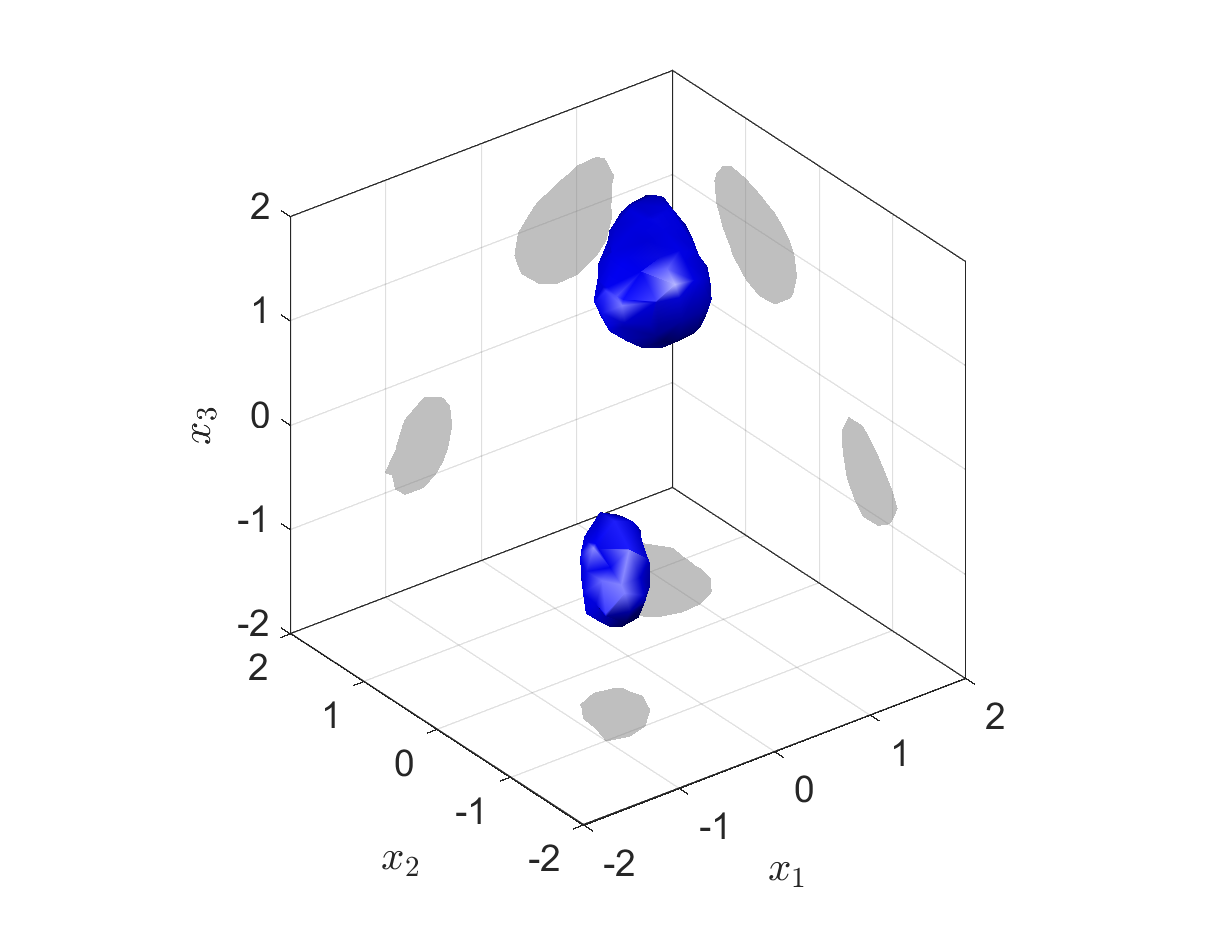} \\
		(a) & (b) & (c)
	\end{tabular}
	\caption{Simultaneous reconstructions of two disconnected cubes $D_6$ and $D_7$ using the indicator function $I_3$, $\varepsilon=5\%$. (a)~The geometry setting. (b)~The reconstruction of the cubes with iso-surface level $0.008$. (c)~The reconstruction of the cubes with iso-surface level $0.007$.}\label{cubes-3d}
\end{figure}
\end{example}

\section{Conclusion}
Simple direct sampling methods based on the time convolution of the Green's function and the signal function have been provided to reconstruct point-like and normal size scatterers in this paper. Numerical examples in both two and three dimensional spaces have been provided to demonstrate the effectiveness and robustness of the methods.

\section*{Acknowledgments}
The work of Bo Chen was supported by the National Natural Science Foundation of China (grand number: 12101603), the Fundamental Research Funds for the Central Universities (Special Project for Civil Aviation University of China, grand number: 3122021072) and the supplementary project of Civil Aviation University of China (grand number: 3122022PT19).


\end{document}